\providecommand{\U}[1]{\protect\rule{.1in}{.1in}}
\newtheorem{theorem}{Theorem}[section]
\newtheorem{condition}[theorem]{Condition}
\newtheorem{corollary}[theorem]{Corollary}
\newtheorem{definition}[theorem]{Definition}
\newtheorem{example}[theorem]{Example}
\newtheorem{lemma}[theorem]{Lemma}
\newtheorem{proposition}[theorem]{Proposition}
\newtheorem{remark}[theorem]{Remark}
\newenvironment{proof}[1][Proof]{\noindent \textbf{#1.} }{\  \rule{0.5em}{0.5em}}
\begin{document}

\title{On positionality of trigger strategies Nash Equilibria in SCAR}
\author{G. Konstantinidis and Ath. Kehagias}
\maketitle

\begin{abstract}
We study the positionality of \emph{trigger strategies} Nash equilibria  $\overline{\sigma}$
for the  $N$-player SCAR games $\Gamma_{N}(G|s_{0},\gamma,\varepsilon)$ (with $N\geq3$).
Our study is exhaustive with respect to types of graphs $G$, initial states $s_{0}$ 
and values of $N,\gamma,\varepsilon$. 
We conclude that in the majority of cases,
profiles $\overline{\sigma}$ are nonpositional. Whenever $\overline{\sigma}$ are positional
a key role is played by paths and the $\varepsilon$, $\gamma$ values
(especially whether $\varepsilon>0$ or not).
A crucial concept in our analysis is the \emph{state cop number}, which is first introduced in the current paper. 

\end{abstract}

\section{Introduction\label{sec01}}

In \cite{Konstantinidis2019} we introduce the game of \emph{selfish cops and
adversarial robber} (SCAR) denoted by $\Gamma_{N}\left(  G|s_{0}%
,\gamma,\varepsilon\right)  $. This can be seen as an $N$-player variant of
the classic, two-player cops and robber (CR)\ game
\cite{Nowakowski1983,Quilliot1985} where each of the $N-1$ cops is controlled
by a different player (whereas in CR a single player controls all cops). $G$
denotes the graph of the game, $s_{0}$ the starting \emph{state }or
\emph{position} and $\gamma,\varepsilon$ are game parameters. In that paper we
prove (among other results) that $\Gamma_{N}\left(  G|s_{0},\gamma
,\varepsilon\right)  $ has a \emph{Nash equilibrium} (NE)\ in \emph{trigger
strategies} \cite{Thuijsman1997} which is generally \emph{nonpositional}
(i.e., some player's\ next move depends on the past).

In the current paper we explore conditions on the form of $G$, $s_{0}$ and the
values of $N,\gamma,\varepsilon$ under which a trigger strategies NE\ of
$\Gamma_{N}\left(  G|s_{0},\gamma,\varepsilon\right)  $ is actually
\emph{positional} (i.e., each player's next move depends only on the current
state). To obtain an exhaustive list of such conditions it is necessary to use
a graph classification based on a novel concept, the \emph{state cop number},
which is a refinement of the classic cop number \cite{Aigner84}.

\section{Preliminaries\label{sec02}}

All graphs considered here are undirected, finite, connected and simple.
$N\left(  u\right)  $ denotes the open neighborhood of vertex $u$, $d(u,v)$
the distance between $u$ and $v$ and $\left\vert A\right\vert $ the
cardinality of set $A$. In classic CR, $c\left(  G\right)  $ denotes the
\emph{cop number of graph} $G$, defined as \emph{the minimum number of cops
needed to ensure capture from any possible starting position}.

In this paper we focus on \emph{auxiliary games} $\Gamma_{N}^{n}\left(
G|s_{0},\gamma,\varepsilon\right)  $ which we use in \cite{Konstantinidis2019}
to prove the existence of a NE\emph{ }in $\Gamma_{N}\left(  G|s_{0}%
,\gamma,\varepsilon\right)  $. The full story can be found in
\cite{Konstantinidis2019}; here we present basic notation and facts.

For each $n\in\left\{  1,...,N\right\}  $, $\Gamma_{N}^{n}\left(
G|s_{0},\gamma,\varepsilon\right)  $ is a zero-sum two-player \emph{stochastic
game} \cite{FilarVrieze2012} played on the graph $G=\left(  V,E\right)  $;
$\gamma\in\left(  0,1\right)  $ and $\varepsilon\in\left[  0,\frac{1}%
{N-1}\right]  $ are \emph{game parameters}. Player $P_{n}$ controls the $n$-th
token, i.e., cop $C_{n}$ for $n\in\left\{  1,...,N-1\right\}  $, or the robber
$R$ for $n=N$; player $P_{-n}$ controls the rest tokens.

A \emph{state} $s$ of the game has the form $s=\left(  x^{1},...,x^{N}%
,n\right)  $ where $x^{i}\in V$ is the location of the $i$-th token and
$n\in\left\{  1,...,N\right\}  $ is the token moving next. $S$ denotes the set
of all states and can be partitioned as:

\begin{enumerate}
\item $S=S_{c}\cup S_{nc}\cup\{\tau\}$ where, $S_{c}$ is the set of
\emph{capture states,} in which at least one $C_{n}$ is on the same vertex as
$R$ (i.e., $x^{n}=x^{N}$), $S_{nc}$ the set of \emph{noncapture states} where
no $C_{n}$ is on the same vertex as $R$ and $\tau$ the terminal state that
finally occurs in case of capture; or

\item $S=\cup_{n=1}^{N}S^{n}\cup\{\tau\}$ where $S^{n}:=\left\{  s:s=\left(
x^{1},...,x^{N},n\right)  \right\}  $ is the set of states in which the $n$-th
token has the move.
\end{enumerate}

The game begins at some \emph{initial state} $s_{0}$, which also specifies the
first token to move. In each turn a single token is moved from its current
vertex to a neighboring one, always following the sequence $...,C_{1}%
,C_{2},...,C_{N-1},R,C_{1},...$ . The game lasts an infinite number of turns
but is effectively over as soon as capture occurs, if it does, since right
after the system moves to state $\tau$ and stays there ad infinitum.

Regarding the players' \emph{payoffs} and since $\Gamma_{N}^{n}\left(
G|s_{0},\gamma,\varepsilon\right)  $ is a zero-sum game, it suffices to
specify $P_{n}$'s payoff. If the \emph{capture time} (i.e., the first time
that a capture state $s\in S_{c}$ occurs) is $t$, then:

\begin{enumerate}
\item in the game $\Gamma_{N}^{N}\left(  G|s_{0},\gamma,\varepsilon\right)  $,
$P_{N}$'s payoff is $-\gamma^{t}$;

\item in the game $\Gamma_{N}^{n}\left(  G|s_{0},\gamma,\varepsilon\right)  $
($n\in\left\{  1,...,N-1\right\}  $), $P_{n}$'s payoff is
\[%
\begin{array}
[c]{cl}%
\dfrac{\left(  1-\varepsilon\right)  }{K}\gamma^{t} & :\text{when
}K\ \text{cops (}K\in\left\{  1,...,N-2\right\}  \text{) \emph{including}
}C_{n}\text{\ are in the same vertex as }R\text{; }\\
\dfrac{\varepsilon}{N-K-1}\gamma^{t} & :\text{when }K\text{ cops (}%
K\in\left\{  1,...,N-2\right\}  \text{)\ \emph{but no} }C_{n}\text{ are in the
same vertex as }R\text{;}\\
\dfrac{\gamma^{t}}{N-1} & :\text{when all }N-1\text{ cops are in the same
vertex as }R\text{.}%
\end{array}
\]

\end{enumerate}

\noindent In case capture never takes place, $P_{n}$'s payoff is zero.
$P_{-n}$'s payoff is always the negative of $P_{n}$'s.

$\Gamma_{N}^{N}\left(  G|s_{0},\gamma,\varepsilon\right)  $ is essentially the
\emph{modified} CR\ game introduced in \cite{Konstantinidis2019}, the basic
difference with classic CR being that time is counted in turns instead of
rounds; a single token, cop or robber, moves in each turn. Hence $P_{-N}$ aims
to effect capture as soon as possible and $P_{N}$ aims to delay it as much as
possible. In this case the players' optimal strategies depend on $G$ and
$s_{0}$, but not on $\gamma$, $\varepsilon$.

In games $\Gamma_{N}^{n}\left(  G|s_{0},\gamma,\varepsilon\right)  $ however,
where $C_{n}$ plays against the robber and $N-2$ \textquotedblleft
robber-friendly\textquotedblright\ cops, the players' optimal strategies
depend also on the values of $\gamma,\varepsilon$. If $\varepsilon>0$, then
$P_{-n}$'s best outcome is evasion of the robber and from then on, depending
on $\gamma$, $\varepsilon$ and the respective capture times he may prefer a
capture by one of his tokens, or a \emph{joint} capture involving also $C_{n}%
$, or a late as possible \emph{pure} $C_{n}$ capture. The difference in case
$\varepsilon=0$ is that $P_{-n}$ is indifferent between letting the robber
evade and capturing him by one of his tokens, since in both cases he gets his
best outcome, i.e., a zero payoff.

We will shortly return to these optimal strategies. But first let us introduce
a few additional notions.

A \emph{history} $h=(s_{0},s_{1},...)$ is a finite or infinite sequence of
states. Let $H_{f}$ denote the set of \emph{finite} length histories
$(s_{0},...,s)$, $H_{f}^{n}$ those where token $n$ moves, i.e., $s\in S^{n}$
and $H_{fnc}^{n}$ those where $s\in S^{n}\cap S_{nc}$.

A \emph{pure}\ (or deterministic) strategy for the $m$-th token is a function
$\sigma^{m}$ which maps finite histories to next$\ $moves. That is, $\forall
h=(s_{0},s_{1},...,s_{t})\in H_{f}:\sigma^{m}\left(  h\right)  =v$ specifies
that: if the game started at $s_{0}$ and passed through $s_{1},...,s_{t}$,
then next the $m$-th token should move to vertex $v$.

A strategy is \emph{positional}\ (or \emph{stationary Markovian}) if it
depends only on the current state $s_{t}$, i.e.,
\[
\forall h=(s_{0},...,s_{t})\in H_{f}\text{, }\sigma^{m}\left(  h\right)
=\sigma^{m}\left(  s_{t}\right)  .
\]

A \emph{strategy profile} (or, simply a profile) is a tuple $\sigma=\left(
\sigma^{1},...,\sigma^{N}\right)  $. A profile is \emph{positional} if it
consists solely of positional strategies. Otherwise it is called
\emph{nonpositional}.

Standard results \cite{FilarVrieze2012} yield that the players in games
$\Gamma_{N}^{n}\left(  G|s_{0},\gamma,\varepsilon\right)  $ have optimal,
\emph{pure positional} strategies for all $n$, $s_{0}$ and $\gamma
,\varepsilon$. This is the only kind of strategies we consider then for these
games. Furthermore, since $P_{n}$ controls only the $n$-th token, his strategy
consists of a single function $\sigma^{n}$; a $P_{-n}$ strategy though is a
\textquotedblleft vector\textquotedblright\ function $\sigma^{-n}=\left(
\sigma^{m}\right)  _{m\in\left\{  1,...,N\right\}  \backslash n}$ with one
strategy for each of $P_{-n}$'s tokens; and both players' strategies together
yield profile $\sigma=\left(  \sigma^{1},...,\sigma^{N}\right)  =\left(
\sigma^{n},\sigma^{-n}\right)  $.

Since the form of the games is deterministic, if moreover the players use pure
strategies, the games \emph{evolve deterministically}. That is, given an
initial state $s$ and a pure strategy profile $\sigma$, the tuple $(s,\sigma)$
leads in a deterministic manner to either capture or evasion of the robber.

Keeping the above in mind, in the sequel, we will need the following definitions.

$T\left(  s,\sigma\right)  $ denotes the \emph{capture time} (finite or not)
starting from state $s$ under pure profile $\sigma$.

$\widehat{\sigma}^{n}$ denotes a pure positional \emph{optimal strategy}
\emph{for the} $n$-th \emph{token in modified CR game} $\Gamma_{N}^{N}\left(
G|s_{0},\gamma,\varepsilon\right)  $, and $\widehat{\Sigma}^{n}$ the set of
all $\widehat{\sigma}^{n}$'s. Strategies $\widehat{\sigma}^{n}$ will be called
\emph{CR-optimal}.

Note that: for \emph{any} initial state $s$ and CR-optimal profile
$\widehat{\sigma}=(\widehat{\sigma}^{1},...,\widehat{\sigma}^{N})$ such that
$(s,\widehat{\sigma})$ leads to capture, \emph{it is always the same cop
effecting capture and at the same time }$T\left(  s,\widehat{\sigma}\right)
$. This follows from the facts:\ (i)\ in each turn only one token moves and
(ii)\ under CR-optimal play, $R$ does \emph{never} run into a
cop\footnote{Contrary to the classic CR, where capture under optimal play
occurs in the minimum number of \emph{rounds }and it can possibly be effected
by \emph{different} cops, in modified CR capture under optimal play (i.e., for
every $\widehat{\sigma}\in\widehat{\Sigma}=\times_{n\in\{1,...,N\}}%
\widehat{\Sigma}^{n}$) occurs in the minimum number of \emph{moves} and thus
always by the \emph{same} cop.}. Let $\widehat{C}(s)$ then denote the
\emph{cop effecting capture under every CR-optimal profile} $\widehat{\sigma}%
$, \emph{when the game starts at} $s$, and $\widehat{T}(s):=T\left(
s,\widehat{\sigma}\right)  $ the respective \emph{number of} \emph{moves. }If
$\widehat{C}(s)=C_{m}$ for some $m\in\{1,...,N-1\}$, then at times we denote
this by $\widehat{C}_{m}(s)$.

Finally, for all $n,m\in\{1,...,N\}$, let $\phi_{m}^{n}$ be an \emph{optimal,
pure positional strategy for the} $n$-\emph{th token in game} $\Gamma_{N}%
^{m}\left(  G|s_{0},\gamma,\varepsilon\right)  $ (where dependence on
$\gamma,\varepsilon$ has been supressed); let $\Phi_{m}^{n}$ be the respective set.

We now turn to SCAR games $\Gamma_{N}\left(  G|s_{0},\gamma,\varepsilon
\right)  $ and the so-called \emph{trigger strategies} profiles $\overline
{\sigma}$, where%
\[
\overline{\sigma}:=\left(  \overline{\sigma}^{1},...,\overline{\sigma}%
^{N}\right)
\]
and each trigger strategy $\overline{\sigma}^{n}$ is composed from strategies
$\phi_{m}^{n}$ as follows:

For all $h=(s_{0},...,s)\in H_{f}$%
\[
\overline{\sigma}^{n}(h):=\left\{
\begin{array}
[c]{ll}%
\phi_{n}^{n}(s) & \text{as long as every player }m\in\left\{  1,...,N\right\}
\backslash n\text{ follows }\phi_{m}^{m}\text{; }\\
\phi_{m}^{n}(s) & \text{as soon as some player }m\in\left\{  1,...,N\right\}
\backslash n\ \text{ deviates from }\phi_{m}^{m}\text{.}%
\end{array}
\right.
\]
In general, $\overline{\sigma}^{n}$ is nonpositional by construction (and so
is $\overline{\sigma}$ then) since it takes into account the players' past
behavior. However, if $\phi_{n}^{n}\left(  s\right)  =\phi_{m}^{n}\left(
s\right)  $ for all $m$ and \textquotedblleft relevant\textquotedblright%
\ states $s$, $\overline{\sigma}^{n}$ becomes positional.

To better understand the meaning of this latter condition, consider the
following. Roughly speaking, cop $C_{n}$'s optimal strategy $\phi_{n}^{n}$ in
the game $\Gamma_{N}^{n}$ (where he plays against a \textquotedblleft
coalition\textquotedblright\ of the remaining players) must be also optimal
(i)\ in every game $\Gamma_{N}^{m}$ with $m\in\left\{  1,...,N-1\right\}
\backslash n$ (where he and the remaining players ally against cop $C_{m}$)
and (ii)\ in the CR game $\Gamma_{N}^{N}$ (where he and the remaining cops
chase the robber). In other words: $\overline{\sigma}$ is positional iff, for
every $m$ and $n$, the $n$-th token's CR-optimal strategy $\widehat{\sigma
}^{n}$ is also optimal in $\Gamma_{N}^{m}$ . This can be stated formally as follows.

\begin{condition}
\label{conpos1}Let $\overline{\sigma}=(\overline{\sigma}^{1},...,\overline
{\sigma}^{N})$ be a trigger strategies profile in $\Gamma_{N}(G|s_{0}%
,\gamma,\varepsilon)$ with $\overline{\sigma}^{n}$ consisting of $\left(
\phi_{m}^{n}\right)  _{m=1}^{N}$. Then $\overline{\sigma}$ is positional
(resp. nonpositional) iff \textbf{A1} (resp. \textbf{A2}) holds:%
\begin{align}
\mathbf{A1}  &  :\forall n,m\in\left\{  1,...,N\right\}  \text{, }%
\exists\widehat{\sigma}^{n}\in\widehat{\Sigma}^{n}:\forall h=\left(
s_{0},...,s\right)  \in H_{fnc}^{n}\text{, }\phi_{m}^{n}(s)=\widehat{\sigma
}^{n}(s),\\
\mathbf{A2}  &  :\exists n,m\in\left\{  1,...,N\right\}  :\forall
\widehat{\sigma}^{n}\in\widehat{\Sigma}^{n}\text{, }\exists h=\left(
s_{0},...,s\right)  \in H_{fnc}^{n}:\phi_{m}^{n}(s)\neq\widehat{\sigma}%
^{n}(s).
\end{align}

\end{condition}

The goal of this paper is to explore the form of graphs $G$ and initial states
$s_{0}$ and the values of parameters $N$, $\gamma$ and $\varepsilon$, under
which, each of the mutually exclusive conditions \textbf{A1} or \textbf{A2}
holds. Given $\Gamma_{N}^{N}$ is the modified CR, $\Phi_{N}^{n}=\widehat
{\Sigma}^{n}$ and $\mathbf{A1}$ holds always for $m=N$. Hence we will be
examining the remaining cases.

\section{Analysis\label{sec03}}

The study is divided as follows. In Section \ref{sec0401} we study the case
$\left\vert V\right\vert =2,\varepsilon\geq0$ and in Section \ref{sec0402} the
case $\left\vert V\right\vert >2,\varepsilon>0$; Section \ref{sec0403}
concerns the case $\left\vert V\right\vert >2,\varepsilon=0$ and is further
divided in Section \ref{sec040301}, where $c(G)\leq N-1$, and in Section
\ref{sec040302}, where $c(G)>N-1$.

To avoid trivialities we only consider initial states $s_{0}\in S_{nc}$.

\subsection{Case $\left\vert V\right\vert =2$ (the path $\mathcal{P}_{2}$)
$\varepsilon\geq0$\label{sec0401}}

\begin{proposition}
\label{PropposP2}Let $G=\left(  V,E\right)  $ with $\left\vert V\right\vert
=2$. Then $\Gamma_{3}(G|s_{0},\gamma,\varepsilon)$ has a (unique) positional
trigger strategies profile $\overline{\sigma}$ iff $\varepsilon\in\left[
0,\frac{1}{2}\right)  $ and $\gamma\in\left[  \sqrt{\frac{\varepsilon
}{1-\varepsilon}},\frac{1}{2-2\varepsilon}\right]  $.
\end{proposition}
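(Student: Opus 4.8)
The plan is to analyze the smallest nontrivial case explicitly. With $|V|=2$, the graph is the single edge $\mathcal{P}_2$ on vertices, say, $V=\{a,b\}$, and $N=3$ means we have two cops $C_1,C_2$ and one robber $R$. Because the vertex set is tiny, every token's only nontrivial move is to cross the edge, so the state space and the players' deterministic strategies can be enumerated completely. The first step is therefore to set up notation: write out the states $s=(x^1,x^2,x^3,n)\in S_{nc}$ (there are only a handful), identify for each which token moves next, and record the one-step transitions. Since $s_0\in S_{nc}$, initially $R$ sits on a vertex with no cop, and the only question is whether a cop crosses onto $R$'s vertex (capture) or $R$ crosses away.

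Next I would pin down the CR-optimal quantities $\widehat{C}(s)$ and $\widehat{T}(s)$ for each relevant initial state. In modified CR on $\mathcal{P}_2$ capture is immediate or near-immediate: with two cops on a single edge, at least one cop can reach the robber's vertex, and the footnote's fact (always the same cop, same move count) lets me tabulate $\widehat{T}(s)$ exactly. I would then compute, for each of the relevant games $\Gamma_3^m$ ($m=1,2,3$), the optimal strategies $\phi_m^n$, paying attention to the payoff structure: for a cop player $P_n$ facing a joint versus pure capture, the payoff weights $\tfrac{1-\varepsilon}{K}\gamma^t$, $\tfrac{\varepsilon}{N-K-1}\gamma^t$, and $\tfrac{1}{N-1}\gamma^t$ must be compared. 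With $N=3$ the only possibilities are $K=1$ (a single cop captures) or $K=2$ (both cops, i.e.\ all $N-1$) capture, so the relevant comparison for $P_n$ is between getting $(1-\varepsilon)\gamma^{t_1}$ from a pure $C_n$-capture at time $t_1$, getting $\varepsilon\gamma^{t_2}$ from a capture by the other cop at time $t_2$, and $\tfrac12\gamma^{t_3}$ from a joint capture. These inequalities, together with the one-move-apart structure of turns on $\mathcal{P}_2$, are exactly what produce the threshold $\gamma$-values in the statement.

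The heart of the argument is then to translate Condition~\ref{conpos1} into the stated parameter region. By the remark after the condition, $\overline{\sigma}$ is positional iff \textbf{A1} holds, i.e.\ each token's CR-optimal strategy $\widehat{\sigma}^n$ is simultaneously optimal in every $\Gamma_3^m$. I would check \textbf{A1} game by game: the nontrivial requirement is that cop $C_n$'s strategy in $\Gamma_3^n$ (where his coalition partners are ``robber-friendly'') still agrees with the aggressive CR-optimal move. Forcing this agreement yields two competing constraints — an upper bound on $\gamma$ coming from $C_n$ preferring an immediate pure capture over waiting for a joint one (giving $\gamma\le\frac{1}{2-2\varepsilon}$), and a lower bound coming from $C_n$ preferring to capture now rather than letting the partner cop capture later under the $\varepsilon$-weighted payoff (giving $\gamma\ge\sqrt{\frac{\varepsilon}{1-\varepsilon}}$). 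The constraint $\varepsilon<\tfrac12$ must emerge as the feasibility condition for this interval to be nonempty, i.e.\ from requiring $\sqrt{\frac{\varepsilon}{1-\varepsilon}}\le\frac{1}{2-2\varepsilon}$; I would verify that this inequality reduces exactly to $\varepsilon\in[0,\tfrac12)$ and that at the endpoint $\varepsilon=\tfrac12$ the interval collapses.

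The main obstacle I anticipate is neither the enumeration nor the algebra but the careful bookkeeping of \emph{whose} capture happens at \emph{which} turn-parity on $\mathcal{P}_2$: because tokens move one at a time in the fixed cyclic order $C_1,C_2,R,\dots$, the capture times $t_1,t_2,t_3$ for the pure-$C_n$, pure-other-cop, and joint scenarios differ by small integer offsets that depend on the initial state $s_0$ and which token moves first. Getting these offsets right is what makes the exponents in $\gamma^{t_i}$ correct, and hence what makes the derived thresholds match $\sqrt{\varepsilon/(1-\varepsilon)}$ and $1/(2-2\varepsilon)$ rather than some neighboring expressions. I would also need to confirm the \textbf{uniqueness} claim, which should follow because on $\mathcal{P}_2$ the optimal positional moves are forced (there is only one edge to traverse), so whenever \textbf{A1} holds the positional profile it determines is unique. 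Finally I would argue that outside the stated region \textbf{A2} holds, by exhibiting, in each complementary subcase, a specific state $s$ and indices $n,m$ witnessing $\phi_m^n(s)\neq\widehat{\sigma}^n(s)$ for every CR-optimal $\widehat{\sigma}^n$.
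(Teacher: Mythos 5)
Your skeleton (enumerate the handful of states on $\mathcal{P}_2$, compute the optimal strategies $\phi_m^n$ in each auxiliary game $\Gamma_3^m$, and impose Condition \ref{conpos1} game by game) is exactly the paper's route, but the two steps at the heart of your plan would fail as you describe them. You place the binding constraints in the wrong player's incentives: you say the nontrivial requirement is that $C_n$'s own strategy in $\Gamma_3^n$ agree with the aggressive CR-optimal move, but that agreement ($\phi_n^n=\widehat{\sigma}^n$, immediate capture) holds automatically for \emph{all} $(\gamma,\varepsilon)\in(0,1)\times\left[0,\frac{1}{2}\right]$, since $(1-\varepsilon)\gamma^{t}>\max\left(\varepsilon\gamma^{t+1},\frac{1}{2}\gamma^{t+2}\right)$ throughout that range. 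Had you checked only these, every test would pass and you would wrongly conclude that a positional $\overline{\sigma}$ exists for all parameter values. The thresholds actually come from the \emph{coalition's} tokens, i.e.\ from $\phi_m^n$ with $n\neq m$: in $\Gamma_3^1$, requiring that $P_{-1}$ not prefer to defer a capture by his \emph{own} cop $C_2$ (loss $\varepsilon\gamma^{t}$ now, versus $(1-\varepsilon)\gamma^{t+2}$ if he waits for $C_1$) yields $\gamma\geq\sqrt{\varepsilon/(1-\varepsilon)}$, and requiring that $P_{-1}$ not prefer to throw the \emph{robber} onto both cops (loss $\frac{1}{2}\gamma^{t}$ now, versus $(1-\varepsilon)\gamma^{t+1}$ if $R$ stays put and $C_1$ captures) yields $\gamma\leq\frac{1}{2-2\varepsilon}$; the analogous constraints from $\Gamma_3^2$ ($\gamma\geq\varepsilon/(1-\varepsilon)$ and, for $\varepsilon>0$, $\gamma\leq\frac{1}{2\varepsilon}$) must also be collected, though they turn out to be weaker. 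By contrast, your proposed source of the upper bound ($C_n$ preferring an immediate pure capture to a later joint one) amounts to $(1-\varepsilon)\gamma^{t}\geq\frac{1}{2}\gamma^{t+2}$, which is vacuously true and produces no bound.

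Your derivation of $\varepsilon<\frac{1}{2}$ also fails: the inequality $\sqrt{\varepsilon/(1-\varepsilon)}\leq\frac{1}{2-2\varepsilon}$ is equivalent to $(2\varepsilon-1)^{2}\geq0$, hence holds for \emph{every} $\varepsilon$, so nonemptiness of the interval can never produce the restriction. The restriction comes instead from intersecting the interval with the admissible range $\gamma\in(0,1)$: one needs $\sqrt{\varepsilon/(1-\varepsilon)}<1$ (in the paper's bookkeeping, $2\varepsilon\leq\gamma<1$), which is exactly $\varepsilon<\frac{1}{2}$; at $\varepsilon=\frac{1}{2}$ the interval degenerates to the single point $\gamma=1$, which is not an admissible discount factor, and collapsing to a point inside $(0,1)$ would not by itself have been a problem. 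Your turn-parity concern and the uniqueness argument are fine (the CR-optimal strategies on $\mathcal{P}_2$ are indeed unique), but without the two corrections above the plan does not reach the stated parameter region.
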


\begin{proof}
The set of noncapture states is
\[
S_{nc}=\left\{
(1,1,2,1),(1,1,2,2),(1,1,2,3),(2,2,1,1),(2,2,1,2),(2,2,1,3)\right\}  \text{.}%
\]
Due to symmetry we only consider initial states $s_{0}=(1,1,2,n)$,
$n\in\left\{  1,2,3\right\}  $; then each $\left(  1,1,2,n\right)  $ has two
possible successors (e.g., the successors of $\left(  1,1,2,1\right)  $ are
$\left(  1,1,2,1\right)  $ and $\left(  1,1,2,2\right)  $) i.e., given $s_{0}%
$, each token has two positional strategies. Thus, in this case Condition
\textbf{A1} becomes:%
\begin{equation}
\forall n,m\in\left\{  1,2,3\right\}  ,\exists\widehat{\sigma}^{n}\in
\widehat{\Sigma}^{n}:\phi_{m}^{n}(1,1,2,n)=\widehat{\sigma}^{n}%
(1,1,2,n)\text{.} \label{eq001}%
\end{equation}

We examine under which conditions (\ref{eq001}) holds and hence $\overline
{\sigma}=(\overline{\sigma}^{1},\overline{\sigma}^{2},\overline{\sigma}^{3})$
is positional.

To begin with, the (unique) CR-optimal strategies are: for the cops,
$\widehat{\sigma}^{1}(1,1,2,1)=\widehat{\sigma}^{2}(1,1,2,2)=2$ (immediate
capture), and$\ $for the robber $\widehat{\sigma}^{3}(1,1,2,3)=2$ (stay in place).

\medskip

\noindent\underline{I. In game $\Gamma_{3}^{1}(G|s_{0},\gamma,\varepsilon)$}
($P_{1}$ controls $C_{1}$, $P_{-1}$ controls $C_{2}$ and $R$) we have the following.

For token $C_{1}$, the unique optimal strategy $\phi_{1}^{1}\in\Phi_{1}^{1}$
prescribes immediate capture at $s=(1,1,2,1)$, i.e., $\phi_{1}^{1}(s)=2$.
Indeed, if $C_{1}$ captures say at time $t$, then $P_{1}$'s payoff is
$(1-\varepsilon)\gamma^{t}$. Otherwise, depending on the values of
$\gamma,\varepsilon$, $P_{-1}$ will optimally play so that, either $C_{2}$
effects capture in the next move, and then $P_{1}$'s payoff will be
$\varepsilon\gamma^{t+1}$, or $C_{2}$ stays put and $R$ runs into both
$C_{1},C_{2}$ in the next move, and $P_{1}$'s payoff will be $\frac{1}%
{2}\gamma^{t+2}$. But $\left(  1-\varepsilon\right)  \gamma^{t}>\max\left(
\varepsilon\gamma^{t+1},\frac{1}{2}\gamma^{t+2}\right)  $ for all $\left(
\gamma,\varepsilon\right)  \in\left(  0,1\right)  \times\lbrack0,\frac{1}{2}%
]$. Hence,%
\begin{equation}
\forall\left(  \gamma,\varepsilon\right)  \in\left(  0,1\right)  \times
\lbrack0,\frac{1}{2}]\text{, }\phi_{1}^{1}(1,1,2,1)=2=\widehat{\sigma}%
^{1}(1,1,2,1)\text{.} \label{eq003a}%
\end{equation}

For token $C_{2}$, moving at time $t$ from state $s=(1,1,2,2)$ we have the
following possibilities.

\begin{enumerate}
\item $C_{2}$ captures at $t$; $P_{-1}$'s loss is $\varepsilon\gamma^{t}$;

\item $C_{2}$ stays put at $t$, $R$ runs into both $C_{1},C_{2}$ at $t+1$;
$P_{-1}$'s loss is $\frac{1}{2}\gamma^{t+1}$;

\item $C_{2}$ and $R$ stay put and $C_{1}$ captures at $t+2$; $P_{-1}$'s loss
is $(1-\varepsilon)\gamma^{t+2}$.
\end{enumerate}

\noindent For a positional $\overline{\sigma}^{2}$, $P_{-1}$ must not prefer
(2) or (3) to (1), i.e.,%
\begin{equation}
\varepsilon\gamma^{t}\leq\min\left(  \frac{1}{2}\gamma^{t+1}\text{
},(1-\varepsilon)\gamma^{t+2}\right)  \Rightarrow\gamma\geq\max\left(
2\varepsilon,\sqrt{\frac{\varepsilon}{1-\varepsilon}}\right)  \label{eq003b}%
\end{equation}
Given $\gamma<1$, from $\gamma\geq2\varepsilon$ we get $\varepsilon<\frac
{1}{2}$. Therefore,
\begin{equation}
\varepsilon\in\left[  0,\frac{1}{2}\right)  \text{ and }\gamma\in\left[
\sqrt{\frac{\varepsilon}{1-\varepsilon}},1\right)  \label{eq003d}%
\end{equation}
guarantee the existence of a $\phi_{1}^{2}$ such that:
\[
\phi_{1}^{2}(s)=\widehat{\sigma}^{2}(s)\text{.}%
\]

For token $R$, moving at time $t$ from state $s=(1,1,2,3)$ we have the
following possibilities.

\begin{enumerate}
\item $R$ stays put at $t$ and $C_{1}$ effects capture at $t+1$; $P_{-1}$'s
loss is $(1-\varepsilon)\gamma^{+1}$.

\item $R$ runs into both $C_{1},C_{2}$ at $t$; $P_{-1}$'s loss is $\frac{1}%
{2}\gamma^{t}$.
\end{enumerate}

\noindent For a positional $\overline{\sigma}^{3}$, $P_{-1}$ must not prefer
(2) to (1):
\begin{equation}
(1-\varepsilon)\gamma^{t+1}\leq\frac{1}{2}\gamma^{t}\Rightarrow\gamma\leq
\frac{1}{2-2\varepsilon}\text{.} \label{eq003f}%
\end{equation}

\noindent\underline{II. In game $\Gamma_{3}^{2}(G|s_{0},\gamma,\varepsilon)$}
($P_{2}$ controls $C_{2}$, $P_{-2}$ controls $C_{1}$ and $R$) we have the following.

Regarding token $C_{2}$:\ moving at $t$ from $s=(1,1,2,2)$, the unique optimal
strategy prescribes immediate capture, i.e., $\phi_{2}^{2}(s)=2=\widehat
{\sigma}^{2}(s)$, for all $\left(  \gamma,\varepsilon\right)  \in\left(
0,1\right)  \times\lbrack0,\frac{1}{2}]$. Indeed, $P_{2}$'s payoff in this
case is $(1-\varepsilon)\gamma^{t}$. Otherwise, optimally $R$ stays in place
at $t+1$ and at $t+2$ $C_{1}$ captures with $P_{2}$'s payoff being
$\varepsilon\gamma^{t+2}<(1-\varepsilon)\gamma^{t}$.

Regarding token $C_{1}$:

\begin{enumerate}
\item if $C_{1}$ captures at $t$, then $P_{-2}$'s loss is $\varepsilon
\gamma^{t}$;

\item otherwise (optimally) $C_{2}$ captures at $t+1$ and $P_{-2}$'s loss is
$(1-\varepsilon)\gamma^{t+1}$.
\end{enumerate}

\noindent For a positional $\overline{\sigma}^{1}$ it must be:
\begin{equation}
\varepsilon\gamma^{t}\leq(1-\varepsilon)\gamma^{t+1}\Rightarrow\gamma\geq
\frac{\varepsilon}{1-\varepsilon}\text{.} \label{eq004}%
\end{equation}

Regarding the robber token $R$ we have the following possibilities.

\begin{enumerate}
\item $R$ stays put at time $t$ and $C_{1}$ effects capture at $t+1$; $P_{-2}%
$'s loss is $\varepsilon\gamma^{t+1}$;

\item $R$ and $C_{1}$ stay put and $C_{2}$ captures at time $t+2$; $P_{-2}$'s
loss is $(1-\varepsilon)\gamma^{t+2}$.

\item $R$ runs into both $C_{1},C_{2}$ at $t$; $P_{-2}$'s loss is $\frac{1}%
{2}\gamma^{t}$.
\end{enumerate}

\noindent\noindent For a positional $\overline{\sigma}^{3}$ (1)\ must be at
least as good for $P_{-2}$ as (2) and (3). By (\ref{eq004}) we have,
\[
\gamma\geq\frac{\varepsilon}{1-\varepsilon}\Rightarrow\varepsilon\gamma
^{t+1}\leq(1-\varepsilon)\gamma^{t+2}%
\]
and $P_{-2}$ does not prefer (2) to (1). \ For (1)\ to be at least as good as
(3) it must be $\varepsilon\gamma^{t+1}\leq\frac{1}{2}\gamma^{t}$. If
$\varepsilon=0$, this holds always. If $\varepsilon>0$ it must be%
\begin{equation}
\gamma\leq\frac{1}{2\varepsilon}\text{.} \label{eq010a}%
\end{equation}

For a positional profile $\overline{\sigma}=\left(  \overline{\sigma}%
^{1},\overline{\sigma}^{2},\overline{\sigma}^{3}\right)  $, (\ref{eq003d}%
)-(\ref{eq010a}) must all hold; this yields the required result.\bigskip
\end{proof}

\begin{proposition}
\label{prop0302}Let $G=\left(  V,E\right)  $ with $\left\vert V\right\vert
=2$; let $N>3$ and $\varepsilon>0$. Then every trigger strategies profile
$\overline{\sigma}$ of $\Gamma_{N}(G|s_{0},\gamma,\varepsilon)$ is
nonpositional, for all $\gamma\in(0,1)$.
\end{proposition}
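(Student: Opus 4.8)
The plan is to establish Condition \textbf{A2} for one explicit pair $(n,m)$, which by Condition~\ref{conpos1} already makes every trigger profile $\overline{\sigma}$ nonpositional. First I would exploit $\left\vert V\right\vert =2$ to collapse the state space: in any noncapture state all $N-1$ cops occupy one vertex and $R$ the other, so up to the vertex symmetry the only relevant states are $s^{(n)}:=(1,\dots,1,2,n)$, $n\in\{1,\dots,N\}$, each reachable from $s_0$ by having every token stay put in turn. At $s^{(n)}$ the moves are binary: a cop on turn either waits or steps onto $R$ (a solo, $K=1$ capture), and $R$ either waits or leaps onto the cops' vertex (a joint capture by all $N-1$ cops). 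In the modified game $\Gamma_N^N$ the cops minimize and $R$ maximizes capture time, so the unique CR-optimal move at $s^{(n)}$ (for $n\le N-1$) is $C_n$'s immediate solo capture; hence every $\widehat{\sigma}^n\in\widehat{\Sigma}^n$ satisfies $\widehat{\sigma}^n(s^{(n)})=2$.

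Because $N>3$ there are $N-1\ge 3$ cops, so I can take the game $\Gamma_N^2$ and the token $n=3$, noting $3\ne 2$, $3\ne 1$, and that $C_3$ is one of the $N-2\ge 2$ cops run by the coalition $P_{-2}$. I would then compute $\phi_2^3(s^{(3)})$ by weighing $P_2$'s two possible payoffs. If $C_3$ captures at once, $P_2$ receives $\tfrac{\varepsilon}{N-2}\gamma^{t}$. If $C_3$ waits, $P_{-2}$ can keep $C_3,\dots,C_{N-1}$ and $R$ all stationary and let the friendly cop $C_1$ make the solo capture at its next turn, which falls exactly one step before $C_2$'s next turn (indeed $N-2$ turns after $s^{(3)}$); this guarantees $P_2$ at most $\tfrac{\varepsilon}{N-2}\gamma^{t'}$ with $t'=t+(N-2)>t$. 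Since $\varepsilon>0$ and $\gamma\in(0,1)$ give $\gamma^{t'}<\gamma^{t}$, the coalition strictly prefers $C_3$ to wait; as waiting is \emph{strictly} optimal, every $\phi_2^3\in\Phi_2^3$ prescribes it, so $\phi_2^3(s^{(3)})\ne 2=\widehat{\sigma}^3(s^{(3)})$ for all $\widehat{\sigma}^3$. Taking $h$ to be the all-wait history reaching $s^{(3)}$, this is precisely \textbf{A2} with $(n,m)=(3,2)$; since the strict inequality $\gamma^{t'}<\gamma^{t}$ holds for every $\gamma\in(0,1)$, $\overline{\sigma}$ is nonpositional for all $\gamma\in(0,1)$, as claimed.

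The step I expect to demand the most care is proving that the delayed solo capture is genuinely enforceable: that after $C_3$ waits the coalition can still realize a one-cop capture strictly later than $t$ \emph{while} denying $C_2$ the chance to capture first (which would pay $P_2$ the far larger amount weighted by $1-\varepsilon$). This is exactly where $N>3$ is decisive---with at least two friendly cops there is a friendly turn, $C_1$, lying strictly between $s^{(3)}$ and the next turn of $C_2$, so $C_2$ cannot preempt---and the coefficient $\tfrac{\varepsilon}{N-2}$ is common to both the immediate and the delayed capture, leaving the discount factor as the sole tiebreaker. For $N=3$ the lone friendly cop is forced to be the last mover before the protagonist, the delay vanishes, and the obstruction disappears (in agreement with Proposition~\ref{PropposP2}); for $\varepsilon=0$ all these payoffs collapse to $0$ and the strict preference is lost, matching the separate treatment of that case.
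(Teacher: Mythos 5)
Your proposal is correct and takes essentially the same route as the paper's proof: both establish \textbf{A2} at a single reachable state by showing that, in a game $\Gamma_{N}^{m}$ where the coalition owns at least two cops (this is where $N>3$ enters), a coalition cop's immediate solo capture is strictly dominated for $P_{-m}$ by deferring to a later-moving coalition cop, the strict preference coming from $\varepsilon>0$ together with discounting. The only difference is bookkeeping: the paper picks $\Gamma_{N}^{1}$ and has $C_{2}$ defer to $C_{3}$ one turn later, while you pick $\Gamma_{N}^{2}$ and have $C_{3}$ defer to $C_{1}$ after $N-2$ turns (your payoff coefficient $\tfrac{\varepsilon}{N-2}$ is in fact stated more precisely than the paper's).
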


\begin{proof}
Consider game $\Gamma_{N}^{1}(G|s_{0},\gamma,\varepsilon)$ ($P_{1}$ controls
$C_{1}$, $P_{-1}$ controls $C_{2},C_{3},...,C_{N-1}$ and $R$). Let
$s=(1,...,1,2,2)$ and consider $C_{2}$'s optimal move at time $t$ from $s$. If
$C_{2}$ captures $R$, then $P_{-1}$'s loss is $\varepsilon\gamma^{t}$; if
$C_{2}$ stays put and $C_{3}$ captures at $t+1$, $P_{-1}$'s loss is
$\varepsilon\gamma^{t+1}<\varepsilon\gamma^{t}$ (since $\varepsilon>0$). So
for the unique $\phi_{1}^{2},\widehat{\sigma}^{2}$ it is $\phi_{1}^{2}\left(
s\right)  =1\neq2=\widehat{\sigma}^{2}\left(  s\right)  $. We conclude that
$\overline{\sigma}^{2}$ and hence $\overline{\sigma}$ is always nonpositional.
\end{proof}

\begin{proposition}
\label{prop0303a}Let $G=\left(  V,E\right)  $ with $\left\vert V\right\vert
=2$; let $N>3$ and $\varepsilon=0$. The following hold.

\begin{enumerate}
\item $\Gamma_{N}(G|s_{0},\gamma,\varepsilon)$ has at least one nonpositional
trigger strategies profile $\overline{\sigma}$, for all $\gamma\in(0,1).$

\item $\Gamma_{N}(G|s_{0},\gamma,\varepsilon)$ has at least one positional
trigger strategies profile $\overline{\sigma}$ iff $\gamma\in\left(
0,\frac{1}{N-1}\right]  $.
\end{enumerate}
\end{proposition}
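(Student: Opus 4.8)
The plan is to proceed exactly as in the proof of Proposition \ref{PropposP2}, working on the two-vertex graph $\mathcal{P}_{2}$ with $\varepsilon=0$, but now tracking how the presence of $N-1\geq 3$ cops changes the relevant inequalities. By symmetry it suffices to fix $s_{0}=(1,\dots,1,2,n)$ and to analyze, for each game $\Gamma_{N}^{m}$, whether the token's optimal strategy $\phi_{m}^{n}$ can be made to agree with a CR-optimal $\widehat{\sigma}^{n}$ on the noncapture states. The first thing I would record is the CR-optimal play: a designated cop (say $C_{1}$) captures immediately, and the robber stays put, so $\widehat{T}$ is essentially determined and capture is effected by a single fixed cop. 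With $\varepsilon=0$ the payoffs simplify drastically: a pure $C_{n}$ capture still pays $\gamma^{t}$ to $P_{n}$, a capture by an opposing cop pays $P_{n}$ zero, and a full joint capture (all $N-1$ cops on $R$) pays $\frac{1}{N-1}\gamma^{t}$.

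For part (1), I would exhibit one concrete nonpositional profile. The key observation is the same mechanism as in Proposition \ref{prop0302}: in a game $\Gamma_{N}^{m}$ the coalition $P_{-m}$ is \emph{indifferent} (payoff zero either way) between its own cops capturing and letting the robber survive, and with $\varepsilon=0$ an opposing capture and evasion both yield $P_{m}$ the same zero payoff. This indifference means $\Phi_{m}^{n}$ typically contains strategies that do \emph{not} coincide with the unique capture-forcing $\widehat{\sigma}^{n}$; for instance an opposing cop may optimally decline to capture, or capture late, while CR-optimality forces immediate capture by the designated cop. Concretely I would pick $n,m$ and a state $s$ where some optimal $\phi_{m}^{n}(s)$ disagrees with every $\widehat{\sigma}^{n}(s)$, thereby verifying \textbf{A2} and producing a nonpositional $\overline{\sigma}$, valid for all $\gamma\in(0,1)$.

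For part (2) I would establish both directions by assembling the constraints that \textbf{A1} imposes, exactly as inequalities (\ref{eq003d})--(\ref{eq010a}) were assembled in Proposition \ref{PropposP2}. The crucial new constraint comes from comparing a pure $C_{n}$ capture against a \emph{joint} capture in which all $N-1$ cops coincide with $R$. When $C_{n}$ is the trigger player whose strategy must match CR-optimal capture, the coalition $P_{-n}$ can instead steer the game so that $R$ runs into all cops simultaneously, yielding $P_{n}$ the share $\frac{1}{N-1}\gamma^{t'}$ rather than the full $\gamma^{t}$; for positionality we need the immediate pure capture to remain at least as good, which forces an inequality of the form $\frac{1}{N-1}\gamma^{t'}\le\gamma^{t}$ and, after accounting for the fixed turn-offsets $t'-t$ dictated by the move order, collapses to the bound $\gamma\le\frac{1}{N-1}$. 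I would then check that no other constraint (the analogues of (\ref{eq003b}), (\ref{eq003f}), (\ref{eq004})) is binding once $\varepsilon=0$, since every term involving $\varepsilon$ vanishes; this shows $\gamma\in(0,\frac{1}{N-1}]$ is both necessary and sufficient.

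The main obstacle I anticipate is bookkeeping the turn-by-turn offsets correctly with $N-1$ cops present. The payoff comparisons depend on how many single-token moves separate an immediate capture from a joint capture, and with more cops these offsets grow and must be computed from the cyclic move order $C_{1},\dots,C_{N-1},R$ on the two-vertex graph; getting the exact exponent of $\gamma$ in the joint-capture payoff is what pins down the threshold $\frac{1}{N-1}$ rather than some other power of $\gamma$. I would handle this by carefully writing out, for the specific $s$ and $m$ driving the binding constraint, the shortest deterministic play that realizes the joint capture and reading off its move-count, then confirming the resulting inequality reduces to $\gamma\le\frac{1}{N-1}$.
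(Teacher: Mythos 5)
Your Part (1) and the overall architecture match the paper's own proof: with $\varepsilon=0$ the coalition $P_{-1}$ in $\Gamma_{N}^{1}$ is indifferent between a capture by $C_{2}$ now and a capture by $C_{3}$ later (both cost it $0$, which is exactly where $N>3$ is needed), so some optimal $\phi_{1}^{2}$ has $C_{2}$ stay put while the unique CR-optimal $\widehat{\sigma}^{2}$ captures immediately; that verifies \textbf{A2}. One quibble: on $\mathcal{P}_{2}$ the robber can never ``survive'' --- $C_{1}$ will capture him no matter what $P_{-1}$ does --- so the operative indifference is between \emph{which coalition cop} captures and \emph{when}, not between capture and evasion.

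Part (2), however, contains a genuine error: your key inequality is reversed. The decision that pins down the threshold is the \emph{robber's} move in $\Gamma_{N}^{1}$, and the robber is moved by the coalition $P_{-1}$, whose loss equals $P_{1}$'s payoff. Positionality requires $P_{-1}$ to weakly prefer the CR-optimal action (stay put, after which $C_{1}$ captures purely one turn later, costing $P_{-1}$ the full $\gamma^{t+1}$) over steering $R$ into all $N-1$ cops at once (joint capture now, costing $P_{-1}$ only the share $\frac{1}{N-1}\gamma^{t}$). That is the condition
\[
\gamma^{t+1}\leq\frac{1}{N-1}\gamma^{t}\Leftrightarrow\gamma\leq\frac{1}{N-1},
\]
i.e.\ the pure capture must be at least as good \emph{for the coalition}, equivalently at most as good for $P_{1}$. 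You wrote the opposite requirement, $\frac{1}{N-1}\gamma^{t'}\leq\gamma^{t}$ with $t$ the pure-capture time and $t'$ the joint-capture time; since here $t=t'+1$, your inequality collapses to $\gamma\geq\frac{1}{N-1}$, the reverse threshold, so it cannot ``collapse to $\gamma\leq\frac{1}{N-1}$'' as you claim --- carried out literally, your derivation proves the wrong statement. (Sanity check: for $\gamma$ close to $1$ discounting is negligible, so the coalition grabs the $\frac{1}{N-1}$ share rather than pay the full loss one turn later; positionality fails for \emph{large} $\gamma$, not small.) A secondary point: your anticipated difficulty about move-count offsets ``growing with more cops'' is misplaced. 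For $m\geq2$ the games $\Gamma_{N}^{m}$ impose no constraint at all, because the coalition owns $C_{1}$, who captures at zero loss when $R$ stays put; the only binding comparison is in $\Gamma_{N}^{1}$, where the offset is exactly one turn since $C_{1}$ moves right after $R$. The factor $N-1$ enters through the joint-capture share, not through the exponent of $\gamma$.
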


\begin{proof}
All $s\in S_{nc}$ are of the form $s=\left(  v_{1},...,v_{1},v_{2},n\right)  $
with $v_{1}\neq v_{2}$ (no cop is in the same vertex as the robber). At any
such state $s$ the unique CR-optimal strategies are: for cop $C_{n}$,
$\widehat{\sigma}^{n}(v_{1},...,v_{1},v_{2},n)=v_{2}$ (immediate
capture),$\ $for the robber $R$, $\widehat{\sigma}^{N}(v_{1},...,v_{1}%
,v_{2},N)=v_{2}$ (stay in place).

\smallskip

\noindent1. Consider game $\Gamma_{N}^{1}(G|s_{0},\gamma,\varepsilon)$ and
token $C_{2}$ having the next move at state $s=\left(  v_{1},...,v_{1}%
,v_{2},2\right)  $. Moving to $v_{2}$ effects a capture which gives $P_{-1}$
his minimum loss of 0; but so does staying in place, provided some other
$C_{k}$ ($k\in\left\{  3,...,N-1\right\}  $) effects the capture. Thus there
exists $\phi_{1}^{2}:$ $\phi_{1}^{2}\left(  s\right)  =v_{1}\neq
v_{2}=\widehat{\sigma}^{2}\left(  s\right)  $. Hence there exists
$\overline{\sigma}^{2}$ and thus $\overline{\sigma}$ which is nonpositional.

\smallskip

\noindent2. Consider game $\Gamma_{N}^{m}(G|s_{0},\gamma,\varepsilon)$
($m\in\left\{  1,2,...,N-1\right\}  $)

For $C_{m}$: immediate capture (i.e., moving to $v_{2}$) results to a payoff
of $(1-\varepsilon)\gamma^{t}=\gamma^{t}>0$ for $P_{m}$. If $C_{m}$ does not
capture immediately, then optimally $P_{-m}$ effects a $C_{n}$ ($n\neq m$)
capture any time before $C_{m}$ resulting to a payoff of $0<\gamma^{t}$ for
$P_{m}$. So $P_{m}$ prefers immediate capture and thus for the only $\phi
_{m}^{m}$ it is $\phi_{m}^{m}\left(  v_{1},...,v_{1},v_{2},m\right)
=v_{2}=\widehat{\sigma}^{m}\left(  v_{1},...,v_{1},v_{2},m\right)  $.

For $C_{n}$ ($n\in\left\{  1,...,N-1\right\}  \backslash m$): moving to
$v_{2}$ effects a capture which gives $P_{-m}$ his minimum loss of 0. Thus
there exists $\phi_{m}^{n}:\phi_{m}^{n}\left(  v_{1},...,v_{1},v_{2},n\right)
=v_{2}=\widehat{\sigma}^{n}\left(  v_{1},...,v_{1},v_{2},n\right)  $.

For $R$: If $m\in\left\{  2,...,N-1\right\}  $, then $R$ optimally stays put
and capture is effected by any $C_{n}$ with $n<m$, resulting to a loss of $0$
for $P_{-m}$. For $m=1$ we have the following. If $R$ stays put at time $t$,
$C_{1}$ captures at $t+1$ and $P_{-m}$'s loss is $(1-\varepsilon)\gamma
^{t+1}=\gamma^{t+1}$. If $R$ runs into\ (all) cops, $P_{-m}$'s loss is
$\frac{1}{N-1}\gamma^{t}$. Thus if $\gamma^{t+1}\leq\frac{\gamma^{t}}%
{N-1}\Leftrightarrow\gamma\leq\frac{1}{N-1}$, there exists $\phi_{m}^{N}%
:\phi_{m}^{N}\left(  v_{1},...,v_{1},v_{2},N\right)  =v_{2}=\widehat{\sigma
}^{N}\left(  v_{1},...,v_{1},v_{2},N\right)  $. Hence, \textbf{A1} holds and a
positional profile $\overline{\sigma}$ exists iff $\gamma\in\left(  0,\frac
{1}{N-1}\right]  $.
\end{proof}

\subsection{Case $\left\vert V\right\vert \geq3$ and $\varepsilon
>0$\label{sec0402}}

From this point on, unless otherwise specified, we consider $N\geq3$.

\begin{proposition}
\label{prop0303}Let $G=\left(  V,E\right)  $ with $\left\vert V\right\vert
\geq3$; let $\varepsilon>0$. Then every trigger strategies profile
$\overline{\sigma}$ of $\Gamma_{N}(G|s_{0},\gamma,\varepsilon)$ is
nonpositional, for all $\gamma\in(0,1)$.
\end{proposition}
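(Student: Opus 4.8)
The goal is to show that for any graph with $|V|\geq 3$ and any $\varepsilon>0$, \emph{every} trigger strategies profile $\overline{\sigma}$ is nonpositional, regardless of $\gamma\in(0,1)$. By Condition~\ref{conpos1}, it suffices to exhibit a single pair $n,m$ for which \textbf{A2} holds: i.e.\ to find one auxiliary game $\Gamma_N^m$, one token $n$, and one noncapture history ending in a state $s\in S_{nc}$ at which every CR-optimal strategy $\widehat{\sigma}^n$ disagrees with every optimal $\phi_m^n$. The plan is to follow the same mechanism that drove Proposition~\ref{prop0302}: since $\varepsilon>0$, the ``robber-friendly'' coalition $P_{-m}$ strictly prefers a \emph{late} capture by one of its own tokens (payoff discounted by $\gamma$, and scaled by $\varepsilon$) over an \emph{immediate} capture that the CR-optimal strategy would dictate. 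The extra room afforded by $|V|\geq 3$ (rather than the forced adjacency of $\mathcal P_2$) should only make evasion easier, so the strict preference for delay persists.

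Concretely, I would fix $m=N$ (the robber's game is the modified CR game, where $\widehat\sigma$ is defined), choosing instead a cop-index game, say $m=1$, and examine some cop token $C_n$ with $n\neq 1$. First I would pick a convenient configuration: place $R$ and at least two cops $C_n,C_k$ ($k\neq 1,n$) so that $R$ sits adjacent to $C_n$ with $C_n$ able to capture in one move, while $C_k$ is positioned to capture one move later. Because $|V|\geq 3$ and the graph is connected, such a configuration is realizable along any path of length two in $G$. At the state $s\in S_{nc}$ where $C_n$ moves, the CR-optimal strategy $\widehat{\sigma}^n$ prescribes immediate capture (yielding $P_{-1}$ a loss of $\varepsilon\gamma^t$), whereas $P_{-1}$, controlling the coalition, strictly prefers that $C_n$ stall and let $C_k$ capture at $t+1$, giving loss $\varepsilon\gamma^{t+1}<\varepsilon\gamma^t$. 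Hence the unique optimal $\phi_1^n(s)$ differs from $\widehat\sigma^n(s)$, establishing \textbf{A2} for this $n,m$.

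The steps, in order, would be: (i) reduce to exhibiting one witnessing $(n,m)$ pair via Condition~\ref{conpos1}; (ii) construct the explicit noncapture configuration using connectedness and $|V|\geq 3$, ensuring the coalition $P_{-1}$ has a genuine alternative capturing token $C_k$ distinct from $C_n$; (iii) compute the two relevant payoffs and invoke $\varepsilon>0$ together with $\gamma<1$ to get the strict inequality $\varepsilon\gamma^{t+1}<\varepsilon\gamma^t$; (iv) conclude that the optimal coalition response forbids immediate capture by $C_n$, so $\phi_1^n(s)\neq\widehat\sigma^n(s)$ for every CR-optimal $\widehat\sigma^n$.

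The main obstacle is step~(ii): I must be careful that the coalition really does have a strictly better \emph{feasible} continuation than immediate $C_n$-capture, and that this holds \emph{uniformly} over the arbitrary graph. In the $\mathcal P_2$ case of Proposition~\ref{prop0302} the second cop $C_3$ was guaranteed to be in position; here I need to argue that for \emph{some} reachable noncapture state such a backup capturer exists, which requires either exploiting the freedom to choose the initial configuration (the proposition quantifies over the existence of \emph{a} deviation history, not all states) or handling the degenerate possibility that no second cop can be brought adjacent in time. Since there are $N-1\geq 2$ cops and the graph is connected with at least three vertices, I expect this configuration always to exist, but making the reachability and timing rigorous — confirming $s$ lies on an admissible history $h\in H_{fnc}^n$ and that the delayed capture is consistent with the turn order $C_1,\dots,C_{N-1},R$ — is where the care must go.
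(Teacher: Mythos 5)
Your core mechanism is the right one and is exactly the paper's: because $\varepsilon>0$ and $\gamma<1$, the coalition $P_{-1}$ strictly prefers a \emph{later} capture by its own tokens to the immediate capture that every CR-optimal $\widehat{\sigma}^{n}$ dictates, so \textbf{A2} holds for $(n,m)=(2,1)$. However, your concrete deferral device fails on part of the proposition's range. You need a backup cop $C_k$ with $k\neq 1,n$, i.e.\ a \emph{third} cop, hence $N-1\geq 3$. The proposition covers all $N\geq 3$ (Section \ref{sec0402} fixes $N\geq3$), and for $N=3$ the coalition $P_{-1}$ in $\Gamma_{3}^{1}$ controls only $C_{2}$ and $R$: there is no second coalition cop at all. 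This is not the ``degenerate possibility that no second cop can be brought adjacent in time'' which you flag at the end; it is structural, and no repositioning of tokens repairs it. (Your remark that ``there are $N-1\geq 2$ cops'' gives you only $C_1$ and $C_n$, not the needed $C_k$; the mechanism you are importing comes from Proposition \ref{prop0302}, which is stated for $N>3$ precisely for this reason.)

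The paper closes exactly this hole by using the \emph{robber} as the backup capturer: at a state $s=(v_{1},v_{2},\dots,v_{N},2)$ with $v_{1}\neq v_{2}$ and $R$ adjacent to $C_{2}$, the coalition keeps all of $C_{2},\dots,C_{N-1}$ in place and has $R$ step onto $v_{2}$ at time $t+N-2$. Capture rules count the cops co-located with $R$ regardless of who moved, so this is still a capture involving only coalition cops (since $C_{1}$ sits at $v_{1}\neq v_{2}$), and in the whole window from $t$ to $t+N-2$ the adversary's token $C_{1}$ never gets a move — which also disposes of the turn-order concerns you raise, more cleanly than your ``capture at $t+1$'' timing (which additionally forces $k=n+1$). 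One further small point, which you share with the paper's own wording: in $\Gamma_{N}^{1}$ a solo capture by $C_{2}$ costs $P_{-1}$ the amount $\tfrac{\varepsilon}{N-2}\gamma^{t}$, not $\varepsilon\gamma^{t}$, once $N>3$; the strict inequality needed for the deferral argument survives, e.g.\ by choosing the reachable witness state so that $C_{2}$ is the only coalition cop on $v_{2}$, giving deferred loss $\tfrac{\varepsilon}{N-2}\gamma^{t+N-2}<\tfrac{\varepsilon}{N-2}\gamma^{t}$. With the robber-runs-into-$C_{2}$ deferral substituted for your $C_k$ device, your outline becomes the paper's proof and covers all $N\geq3$.
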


\begin{proof}
Given $\left\vert V\right\vert \geq3$, from any $s_{0}$ it is possible to
reach at some $t$\ a state $s=\left(  v_{1},v_{2},...,v_{N},2\right)  \in
S_{nc}$ (so $C_{2}$ moves next) with $v_{1}\neq v_{2}$ and $v_{N}\in N\left(
v_{2}\right)  $. Now, at $s$, every CR-optimal $\widehat{\sigma}^{2}$ dictates
immediate capture by $C_{2}$, i.e., $\forall\widehat{\sigma}^{2}\in
\widehat{\Sigma}^{2}$, $\widehat{\sigma}^{2}(s)=v_{N}$. In game $\Gamma
_{N}^{1}(G|s_{0},\gamma,\varepsilon)$ though at state $s$, if $C_{2}$ captures
immediately $R$, $P_{-1}$'s loss is $\varepsilon\gamma^{t}$. If $P_{-1}$ keeps
$C_{2},...,C_{N-1}$ in place and lets $R$ move to $v_{2}$ at $t+N-2$, then, if
$k\in\{1,...,N-2\}$ is the number of $P_{-1}$'s cop tokens located at $v_{2}$
including $C_{2}$, $P_{-1}$'s loss is $\frac{\varepsilon}{N-k-1}\gamma
^{t+N-2}<\varepsilon\gamma^{t}$. Hence, at $s$, $P_{-1}$ prefers to defer
capture and \emph{never} capture with $C_{2}$. Thus,
\[
\forall\phi_{1}^{2}\in\Phi_{1}^{,2},\forall\widehat{\sigma}^{2}\in
\widehat{\Sigma}^{2}\text{, }\exists h=\left(  s_{0},...,s\right)  \in
H_{fnc}^{n}:\phi_{1}^{2}(s)\neq v_{N}=\widehat{\sigma}^{2}(s)\text{.}%
\]
Consequently \textit{every} $\overline{\sigma}^{2}$ and corresponding profile
$\overline{\sigma}$ is nonpositional.
\end{proof}

\subsection{Case $\left\vert V\right\vert \geq3$ and $\varepsilon
=0$\label{sec0403}}

\begin{proposition}
\label{prop0304}Let $G=(V,E)$ with $\left\vert V\right\vert \geq3$ and
$\varepsilon=0$. Then in every game $\Gamma_{N}(G|s_{0},\gamma,\varepsilon)$
there exists always a nonpositional trigger strategies profile $\overline
{\sigma}$, for all $\gamma\in(0,1)$.
\end{proposition}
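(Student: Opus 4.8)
The plan is to exhibit, for an arbitrary game $\Gamma_{N}(G|s_{0},\gamma,\varepsilon)$ with $\left\vert V\right\vert \geq3$ and $\varepsilon=0$, at least one trigger strategies profile $\overline{\sigma}$ satisfying condition \textbf{A2}. Because of the way trigger strategies are built from the $\phi_{m}^{n}$'s, and because $\Phi_{m}^{n}$ typically contains several optimal strategies when $\varepsilon=0$ (the coalition $P_{-m}$ is \emph{indifferent} among many outcomes that all yield payoff $0$), I expect to have genuine freedom in choosing a component $\phi_{m}^{n}$ that disagrees with every CR-optimal $\widehat{\sigma}^{n}$ at some reachable noncapture state. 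The key point driving the whole argument is the same mechanism used in Proposition~\ref{prop0303}: when $\varepsilon=0$, an ``auxiliary'' cop $C_{n}$ (one with $n\neq m$ in the game $\Gamma_{N}^{m}$) can be told by the coalition to \emph{stay put} rather than capture, since any capture -- whether by $C_{n}$ or by some other cop -- gives $P_{-m}$ the identical loss of $0$.

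First I would fix $m=1$ and look at the auxiliary cop $C_{2}$ in the game $\Gamma_{N}^{1}(G|s_{0},\gamma,\varepsilon)$, paralleling the $\varepsilon>0$ argument but now exploiting indifference instead of strict preference. Since $\left\vert V\right\vert \geq3$, I would invoke (exactly as in Proposition~\ref{prop0303}) the fact that from any initial $s_{0}\in S_{nc}$ the play can reach a noncapture state $s=(v_{1},v_{2},\dots,v_{N},2)$ with $v_{1}\neq v_{2}$ and $v_{N}\in N(v_{2})$, so that $C_{2}$ has an immediate capture available and every CR-optimal strategy prescribes it: $\widehat{\sigma}^{2}(s)=v_{N}$ for all $\widehat{\sigma}^{2}\in\widehat{\Sigma}^{2}$. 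The crucial step is then to argue that in $\Gamma_{N}^{1}$ the coalition $P_{-1}$ (which controls $C_{2}$ together with the other auxiliary tokens and $R$) obtains its best possible payoff, namely $0$, regardless of whether $C_{2}$ captures here or instead remains at $v_{2}$ while the capture is later effected by another cop; since $\varepsilon=0$, a pure-$C_{2}$ capture and an evasion-then-other-cop capture both net $P_{-1}$ a loss of $0$. Hence there exists $\phi_{1}^{2}\in\Phi_{1}^{2}$ with $\phi_{1}^{2}(s)=v_{2}\neq v_{N}=\widehat{\sigma}^{2}(s)$.

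Having produced one auxiliary-token optimal strategy that differs from all CR-optimal strategies of that token at a reachable noncapture history, I would conclude that the pair $(n,m)=(2,1)$ witnesses \textbf{A2}: for this $\phi_{1}^{2}$ we have, for every $\widehat{\sigma}^{2}\in\widehat{\Sigma}^{2}$, a history $h=(s_{0},\dots,s)\in H_{fnc}^{2}$ with $\phi_{1}^{2}(s)\neq\widehat{\sigma}^{2}(s)$. Building the trigger profile $\overline{\sigma}$ whose second component uses precisely this $\phi_{1}^{2}$ then yields a \emph{nonpositional} profile, which is exactly what the proposition asserts (it claims existence of \emph{a} nonpositional profile, not that all profiles are nonpositional, so a single witnessing choice suffices). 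I would note that the argument is uniform in $\gamma\in(0,1)$, since the indifference comes from $\varepsilon=0$ alone and the payoffs compared are both $0$, never involving $\gamma$.

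The main obstacle I anticipate is not the indifference computation itself but the reachability claim and its interaction with the capture-time bookkeeping: I must be sure that the state $s$ with an available $C_{2}$ capture is genuinely reachable as a noncapture state under \emph{some} legitimate play from $s_{0}$, and that after $C_{2}$ elects to stay put the coalition really can still guarantee a capture by another cop giving loss $0$ (rather than inadvertently allowing permanent evasion, which also gives $0$ but must be squared with $R$ being adversarially controlled by $P_{-1}$ here, so this only helps the coalition). Because $\left\vert V\right\vert \geq3$ gives enough room to maneuver a second cop adjacent to $R$, and because any terminal outcome -- capture by any token or evasion -- is worth $0$ to $P_{-1}$ when $\varepsilon=0$, I expect the indifference to close cleanly; the care is entirely in phrasing the ``staying put is also optimal'' claim so that it holds for the specific $\phi_{1}^{2}$ exhibited and at the specific history singled out.
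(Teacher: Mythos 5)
Your proof is correct, and it establishes the proposition by the same overall mechanism as the paper -- reaching, in the game $\Gamma_{N}^{1}$, a noncapture state where a coalition token's unique CR-optimal move can be optimally avoided because $\varepsilon=0$ makes $P_{-1}$ indifferent among all outcomes of loss $0$ -- but the witnessing token differs. You let cop $C_{2}$ deviate: at a state $s\in S^{2}\cap S_{nc}$ with $v_{N}\in N(v_{2})$, every $\widehat{\sigma}^{2}\in\widehat{\Sigma}^{2}$ captures immediately, while some optimal $\phi_{1}^{2}$ stays put. The paper instead lets the \emph{robber} deviate: at the analogous state in $S^{N}\cap S_{nc}$, no $\widehat{\sigma}^{N}\in\widehat{\Sigma}^{N}$ ever moves $R$ onto a cop, while some optimal $\phi_{1}^{N}$ moves $R$ onto $v_{2}$, achieving $P_{-1}$'s minimum loss of $\frac{\varepsilon}{N-k-1}\gamma^{t}=0$. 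The paper's choice is slightly more economical: the robber's deviation is optimal in one step, whereas your argument that $C_{2}$'s staying put is optimal requires justifying the continuation -- and the natural continuation (keep $C_{3},\dots,C_{N-1}$ in place, then run $R$ into $C_{2}$, which $C_{1}$ cannot prevent since he does not move between $C_{2}$'s turn and $R$'s turn) is precisely the paper's robber deviation, so your proof in effect contains the paper's as a subroutine. A side benefit of your version is that it mirrors Proposition \ref{prop0303} exactly, making transparent that the $\varepsilon>0$ strict preference degenerates into $\varepsilon=0$ indifference (hence ``every profile nonpositional'' weakens to ``some profile nonpositional''); both arguments are uniform in $\gamma$, as you note, and both correctly instantiate Condition \textbf{A2} with a single deviating component.
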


\begin{proof}
Given $\left\vert V\right\vert \geq3$, from any starting $s_{0}$ it is
possible to reach at some $t$\ state $s=\left(  v_{1},v_{2},...,v_{N}%
,N\right)  \in S_{nc}\ $(so $R$ has the next move) such that $v_{1}\neq v_{2}$
and $v_{N}\in N\left(  v_{2}\right)  $. Now, under no CR-optimal
$\widehat{\sigma}^{N}$ $R$ ever moves to $v_{2}$. In game $\Gamma_{N}%
^{1}(G|s_{0},\gamma,\varepsilon)$ though $P_{-1}$ \emph{can}, under optimal
play move $R$ to $v_{2}$ since then he has a minimum loss of $\frac
{\varepsilon}{N-k-1}\gamma^{t}=0$, where $k$ is the number of $P_{-1}$'s cop
tokens located at $v_{2}$, including $C_{2}$. Thus,
\[
\exists\phi_{1}^{N}:\forall\widehat{\sigma}^{N}\in\widehat{\Sigma}^{N},\forall
s_{0}\in S_{nc},\exists h=\left(  s_{0},...,s\right)  \in H_{fnc}^{N}:\phi
_{1}^{N}(s)\neq\widehat{\sigma}^{N}(s)\text{;}%
\]
i.e., there always exists a nonpositional $\overline{\sigma}^{N}$ and a
corresponding nonpositional $\overline{\sigma}$.
\end{proof}

\subsubsection{Case:$\ c\left(  G\right)  \leq N-1$\label{sec040301}}

In this part of the paper we connect positionality of $\overline{\sigma}$ to
the cop number $c\left(  G\right)  $ of graph $G$. First we examine the case
where $G$ is a path (hence $c\left(  G\right)  =1$)\ with $\left\vert
V\right\vert \geq3$. \footnote{In a sense this proposition can be seen as an
extension of Proposition \ref{prop0303a}, part 2, to paths with $|V|>2$.}

\begin{proposition}
\label{Proppospathse=0}Let $G$ be a path with $\left\vert V\right\vert \geq3$;
let $\varepsilon=0$. Then $\Gamma_{N}(G|s_{0},\gamma,\varepsilon)$ has a
positional trigger strategies profile $\overline{\sigma}$ iff (i)
$s_{0}\mathbf{\ }$is\textbf{ }such that all cops are to one side of the
robber, and (ii) $\gamma\in\left(  0,\frac{1}{N-1}\right]  $.
\end{proposition}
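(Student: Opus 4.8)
The plan is to verify Condition~\ref{conpos1} directly, so that positionality reduces to deciding \textbf{A1} versus \textbf{A2}; since $\Phi_N^n=\widehat{\Sigma}^n$, only the games $\Gamma_N^m$ with $m\in\{1,\dots,N-1\}$ must be examined. I would coordinatize the path as vertices $1,\dots,L$ with $L=\left\vert V\right\vert\geq3$ and record two structural facts about modified CR on a path. First, because a cop landing on the robber's vertex \emph{is} a capture, no cop can cross the robber without capturing him; hence the split of the cops into those left and those right of the robber is an invariant of every play until capture. Second, under CR-optimal (maximal-delay) play a robber with all cops to one side is driven to the far endpoint while the pursuing cops accumulate on the adjacent vertex; the decisive configuration throughout is therefore $s^{\ast}=(v,\dots,v,v',N)$, with $v'$ the endpoint, $v$ its unique neighbor, \emph{all} cops at $v$, the robber at $v'$, and $R$ to move. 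I would first pin down $\widehat{C}(\cdot)$ and $\widehat{T}(\cdot)$ on the path and confirm $s^{\ast}$ is reachable from any $s_0$ satisfying (i).

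For sufficiency (assume (i) and $\gamma\le\frac1{N-1}$) I take as candidate positional profile the single CR-optimal $\widehat{\sigma}$ in which every cop always steps toward the robber and the robber flees to $v'$ and stays, and I check token by token that each $\widehat{\sigma}^n$ is optimal in \emph{every} $\Gamma_N^m$. For a cop token the claim is that ``move toward the robber'' is optimal in all of these games: with $\varepsilon=0$, bunching the cops only serves to dilute a forced $C_m$-capture and never lengthens the capture time, so chasing is never improvable. The only genuine decision is the robber's at $s^{\ast}$, where the minimizing coalition compares \emph{staying} --- after which $C_1$, the next token in the order $C_1,\dots,C_{N-1},R$, captures alone one turn later, for payoff $\gamma^{t+1}$ to $P_1$ (and $0$ to every $P_m$, $m\neq1$) --- against \emph{running into the stack at} $v$, a joint capture of payoff $\tfrac1{N-1}\gamma^{t}$. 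Since $\gamma\le\frac1{N-1}$ gives $\gamma^{t+1}\le\tfrac1{N-1}\gamma^{t}$, staying is weakly best for the coalition in $\Gamma_N^1$ and strictly best (payoff $0$) in every $\Gamma_N^m$ with $m\neq1$; as staying is exactly the CR-optimal move, \textbf{A1} follows.

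For the necessity of (ii) I reuse $s^{\ast}$ inside $\Gamma_N^1$: when $\gamma>\frac1{N-1}$ one has $\tfrac1{N-1}\gamma^{t}<\gamma^{t+1}$, so the coalition \emph{strictly} prefers the immediate joint capture obtained by running into $v$ over the CR-optimal ``stay''; hence no optimal $\phi_1^{N}$ agrees with any $\widehat{\sigma}^{N}$ at $s^{\ast}$, which is \textbf{A2}. This mirrors Proposition~\ref{prop0303a}(2) and forces $\gamma\le\frac1{N-1}$. For the necessity of (i) suppose the cops straddle the robber in $s_0$ and let $C_{m^{\ast}}=\widehat{C}(s_0)$ be the unique CR-catcher. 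Because the catcher is unique and sits on one side, some non-$C_{m^{\ast}}$ cop lies on the \emph{opposite} side; in $\Gamma_N^{m^{\ast}}$ the coalition can drive the robber into that cop for a \emph{pure} non-$C_{m^{\ast}}$ capture of payoff $0$, strictly better than the CR outcome $\gamma^{\widehat{T}(s_0)}>0$. Since such a capture occurs strictly before $\widehat{T}(s_0)$ (maximal delay being attained only at the unique catcher $C_{m^{\ast}}$), the robber's optimal move in $\Gamma_N^{m^{\ast}}$ cannot coincide with any CR-optimal move, giving \textbf{A2} for every $\gamma\in(0,1)$.

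The routine part is the token-by-token optimality of ``chase/flee'' at the intermediate, non-endgame states, which I expect to follow because no unilateral deviation can shorten $\gamma^{t}$ for $P_m$ nor convert a zero into a positive payoff. The main obstacle is the necessity of (i): one must exhibit a \emph{concrete reachable} robber state at which the payoff-$0$ move into the opposite cop provably differs from \emph{every} CR-optimal (maximal-delay) move, and one must handle the symmetric configurations in which the two nearest cops are equidistant and the identity of $C_{m^{\ast}}$ is fixed only through the turn order $C_1,\dots,C_{N-1},R$.
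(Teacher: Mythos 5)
Your overall route coincides with the paper's: reduce positionality to Condition \ref{conpos1}, note that only the games $\Gamma_{N}^{m}$ with $m\leq N-1$ need checking, locate the binding comparison at the endpoint configuration $s^{\ast}$ (all cops stacked on the neighbor $v$ of the endpoint, robber at the endpoint, robber to move) inside $\Gamma_{N}^{1}$, and extract the threshold from $\gamma^{t+1}\leq\frac{1}{N-1}\gamma^{t}$. Your sufficiency argument and your necessity of (ii) are essentially the paper's Part II; be aware, though, that what you call the routine part is where the paper does real work: it partitions the reachable states by the identity of the CR-catcher, proves that a joint capture can only arise from a robber deviation and must occur before $\widehat{T}(s)$, shows the coalition's best joint capture is the latest one involving the most cops (hence the endpoint stack), and only then verifies that the family of constraints $\gamma^{m}\leq 1/(\widetilde{K}(s)+1)$ is binding exactly at $m=1$, $\widetilde{K}(s)=N-2$, i.e.\ at your $s^{\ast}$. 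That reduction is finite and your sketch would survive it.

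The genuine gap is the necessity of (i). You argue at $s_{0}$ itself: the coalition in $\Gamma_{N}^{m^{\ast}}$ can ``drive the robber into the opposite-side cop'' for a payoff-$0$ capture, strictly better than $\gamma^{\widehat{T}(s_{0})}$, hence the robber's optimal move disagrees with every CR-optimal move. Both steps fail for some straddled $s_{0}$: take $s_{0}$ in which $\widehat{C}(s_{0})=C_{1}$ is already adjacent to the robber and has the first move. Then $C_{1}$ captures before any coalition token can intervene, the coalition's value at $s_{0}$ is exactly the CR outcome $\gamma^{\widehat{T}(s_{0})}$, and CR-optimal play \emph{is} coalition-optimal there --- no disagreement can be extracted at $s_{0}$, for any $\gamma$. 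The disagreement must instead be exhibited at a deeper reachable state, and this is precisely the paper's Part I device: since on a path no cop can cross the robber without capturing, and since reachability in Condition \ref{conpos1} ranges over \emph{arbitrary} noncapture histories (not optimal play), from \emph{every} straddled $s_{0}$ one can reach the ``crammed'' state $\widetilde{s}$ in which every cop occupies one of the two vertices adjacent to $R$ and $R$ moves. At $\widetilde{s}$ the comparison is unambiguous and $\gamma$-free: in $\Gamma_{N}^{1}$, staying loses $\gamma^{t+1}>0$ (since $C_{1}$ is adjacent and moves first among the cops), stepping onto the stack containing $C_{1}$ loses $\frac{1}{K}\gamma^{t}>0$, and stepping onto the stack not containing $C_{1}$ loses $0$; so every optimal $\phi_{1}^{N}$ prescribes the latter move while every $\widehat{\sigma}^{N}$ prescribes staying, which is \textbf{A2}. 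This single state settles nonpositionality for all straddled $s_{0}$ and all $\gamma\in(0,1)$, pins the disagreement at the robber token, and makes the identity of $\widehat{C}(s_{0})$ and your equidistant/turn-order worries irrelevant. You correctly flagged this as the main obstacle; the crammed state is the missing ingredient, and an argument conducted at $s_{0}$ alone cannot substitute for it.
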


\begin{proof}
Let $S_{nc}^{\prime}$ denote the set of states where the robber is between
some cops and $S_{nc}^{\prime\prime}=S_{nc}\backslash S_{nc}^{\prime}$ the set
of states where all cops are to one side of the robber. In Part I we show that
in every game $\Gamma_{N}(G|s_{0},\gamma,\varepsilon)$ with $s_{0}\in
S_{nc}^{\prime}$ every $\overline{\sigma}$ is nonpositional. In part II then
we show that, if $s_{0}\in S_{nc}^{\prime\prime}$, then a positional
$\overline{\sigma}$ exists iff $\gamma\in\left(  0,\frac{1}{N-1}\right]  $.
The combination of Parts I and II yields the result sought.

\bigskip

\noindent\underline{I. Initial states $s_{0}\in S_{nc}^{\prime}$}. From any
such $s_{0}$ we can always reach, at say time $t$, a state $\widetilde{s}$ in
which $R$ has the move, some cops are immediately to his left and the rest are
immediately to his right. Now, every CR-optimal robber strategy $\widehat
{\sigma}^{N}$ at state $\widetilde{s}$ dictates that the robber stays in
place. In game $\Gamma_{N}^{1}(G|s_{0},\gamma,\varepsilon)$ on the contrary,
every optimal strategy $\phi_{1}^{N}$ at $\widetilde{s}$ dictates that the
robber moves into the vertex not occupied by $C_{1}$, because this yields a
minimum loss of $\varepsilon\gamma^{t}=0$ for $P_{-n}$, whereas otherwise
$C_{1}$ optimally captures right after and $P_{-n}$'s loss is $(1-\varepsilon
)\gamma^{t+1}=\gamma^{t+1}>0$. Thus,
\[
\forall\phi_{1}^{N}\in\Phi_{1}^{N}\text{, }\forall\widehat{\sigma}^{N}%
\in\widehat{\Sigma}^{N}\text{, }\forall s_{0}\in S_{nc}^{\prime}\text{,
}\exists h=(s_{0},...,\widetilde{s})\in H_{fnc}^{N}:\phi_{1}^{N}%
(s)\neq\widehat{\sigma}^{N}(s)\text{.}%
\]
Hence in this case, every strategy $\overline{\sigma}^{N}$ and corresponding
profile $\overline{\sigma}$ is nonpositional.

\bigskip

\noindent\underline{II. Initial states $s_{0}\in S_{nc}^{\prime\prime}$}. Let
(without loss of generality) $S_{ncl}^{\prime\prime}\subset S_{nc}%
^{\prime\prime}$ be the states where all cops are to the left of $R$; for
every $s_{0}\in S_{ncl}^{\prime\prime}$ let $S_{ncl}^{\prime\prime}\left(
s_{0}\right)  $ be the set of states that can occur starting from $s_{0}$,
i.e.,
\[
S_{ncl}^{\prime\prime}\left(  s_{0}\right)  :=\left\{  s:\exists h=\left(
s_{0},...,s\right)  \in H_{fnc}\right\}  .
\]
Observe that, for all $s_{0}\in S_{ncl}^{\prime\prime}$, $S_{ncl}%
^{\prime\prime}\left(  s_{0}\right)  =S_{ncl}^{\prime\prime}$. Then existence
of a positional $\overline{\sigma}$ implies:%
\begin{equation}
\forall m,n\in\{1,...,N\} \text{, }\exists\phi_{m}^{n},\widehat{\sigma}%
^{n}:\forall s\in S_{ncl}^{\prime\prime}\cap S^{n}\text{, }\phi_{m}%
^{n}(s)=\widehat{\sigma}^{n}(s)\text{.} \label{eq011a}%
\end{equation}

Given $G$ is a path, for any $s\in S_{ncl}^{\prime\prime}$ and under every
CR-optimal profile $\widehat{\sigma}$, the cop $\widehat{C}\left(  s\right)  $
that captures is the one that is \textquotedblleft closer\textquotedblright%
\ to\emph{ }$R$, taking also into account whose turn is to move and at time
$\widehat{T}\left(  s\right)  $. Let $\widehat{\sigma}_{\ast}=(\widehat
{\sigma}_{\ast}^{1},...,\widehat{\sigma}_{\ast}^{N})$ be the CR-optimal
profile where, $C_{n}$ ($n\in\{1,2,...,N-1\}$) always moves towards $R$, and
$R$ moves away from the cops, reaches the\ end of the path and waits there
until capture. Instead of (\ref{eq011a}) we show the following which is
equivalent:%
\begin{equation}
\forall m,n\in\{1,...,N\} \text{, }\exists\phi_{m}^{n}:\forall s\in
S_{ncl}^{\prime\prime}\cap S^{n}\text{, }\phi_{m}^{n}(s)=\widehat{\sigma
}_{\ast}^{n}(s)\text{.} \label{eq012a}%
\end{equation}

Now fix an $m\in\{1,2,...,N-1\}$ for the rest of the proof and consider game
$\Gamma_{N}^{m}(G|s_{0},\gamma,\varepsilon)$. We partition $S_{ncl}%
^{\prime\prime}$ into two mutually disjoint sets $S_{A},S_{B}$ defined below
and examine each case separately.
\begin{align*}
S_{A}  &  :=\left\{  s\in S_{ncl}^{\prime\prime}:\text{under (every) }%
\widehat{\sigma}\text{, cop }C_{n}\text{ (}n\neq m\text{)\ captures (i.e.,
}\widehat{C}\left(  s\right)  =C_{n}\text{) at }\widehat{T}(s)\right\}  ,\\
S_{B}  &  :=\left\{  s\in S_{ncl}^{\prime\prime}:\text{under (every) }%
\widehat{\sigma}\text{, cop }C_{m}\text{ captures (i.e., }\widehat{C}\left(
s\right)  =C_{m}\text{) at }\widehat{T}(s)\right\}  \text{.}%
\end{align*}

\noindent\underline{\textbf{Case II.A: }$s\in S_{A}$}. For any $s\in S_{A}$,
if $P_{-m}$ uses the chosen CR-optimal (cop and robber) strategies
$\widehat{\sigma}_{\ast}^{n}$ ($n\in\{1,...,N\} \backslash m$) he can force a
$C_{n}$ capture at time $\widehat{T}(s)$ for any strategy of $P_{m}$ and get
his minimum loss of $\varepsilon\gamma^{\widehat{T}(s)}=0$. Given this
strategy of $P_{-m}$, $P_{m}$ cannot affect the outcome. Thus \emph{any}
strategy is optimal for him and so is the CR-optimal strategy $\widehat
{\sigma}_{\ast}^{m}$. Hence%
\begin{equation}
\forall n\in\{1,...,N\} \text{ }\exists\phi_{m}^{n}:\forall s\in S_{A}\cap
S^{n}\text{, }\phi_{m}^{n}(s)=\widehat{\sigma}_{\ast}^{n}(s)\text{.}
\label{eq1121}%
\end{equation}

\noindent\underline{\textbf{Case II.B: }$s\in S_{B}$}. Assume $P_{m}$ uses
$\widehat{\sigma}_{\ast}^{m}$ for $C_{m}$ and consider the options of $P_{-m}%
$. It can be seen that, depending on the state $s$, there exist only two
possibilities, which partition further $S_{B}$ as follows:

\begin{enumerate}
\item States $s\in S_{B_{1}}:P_{m}$ can force a \emph{pure} $C_{m}$ capture,
for \emph{any} strategy of $P_{-m}$, and

\item States $s\in S_{B_{2}}:P_{-m}$ can effect a \emph{joint capture}, i.e.,
one involving $C_{m}$ and some $P_{-m}$ cop tokens.
\end{enumerate}

If $s\in S_{B_{1}}$, then under optimal play (in $\Gamma_{N}^{m}%
(G|s_{0},\gamma,\varepsilon)$) $C_{m}$ chases $R$ till the end of the path and
captures him at time $\widehat{T}\left(  s\right)  $; this describes the
optimal strategies for $C_{m}$ and $R$. The remaining tokens $C_{n}$ cannot
affect the outcome. Thus, any strategy is optimal for them and so is the
chosen $\widehat{\sigma}_{\ast}^{n}$. Hence%
\begin{equation}
\forall n\in\{1,...,N\} \text{ }\exists\phi_{m}^{n}:\forall s\in S_{B_{1}}\cap
S^{n}\text{, }\phi_{m}^{n}(s)=\widehat{\sigma}_{\ast}^{n}(s)\text{.}
\label{eq1122}%
\end{equation}

Let now $s\in S_{B_{2}}$. First note that the only optimal strategies for
$P_{m}$ in this case are strategies $\widehat{\sigma}^{m}\in\widehat{\Sigma
}^{m}$. Indeed, and for any strategy $\sigma^{-m}$ of $P_{-m}$, if $P_{m}$
uses any $\widehat{\sigma}^{m}$ for $C_{m}$ the outcome is either a pure
$C_{m}$ capture, at the fastest possible time, or a joint capture, at the
fastest possible time. Any other strategy of $P_{m}$ leads to suboptimal for
him outcomes, even to pure $C_{n}$ ($n\neq m$) capture.

Assuming $P_{m}$ uses $\widehat{\sigma}_{\ast}^{m}$ for $C_{m}$, then $P_{-m}$
can effect a joint capture at some time $t$. First note that a joint capture
can only happen after a move by $R$ and only if he deviates from
$\widehat{\sigma}^{N}$, and second that this can only be at a time
$t<\widehat{T}\left(  s\right)  $.\footnote{The latter is a consequence of the
following: In a \emph{pure }$C_{m}$ capture under every $\widehat{\sigma}$,
only the moves of $C_{m}$ and $R$ are relevant (i.e., the remaining cops
cannot affect the outcome). Thus if $P_{-m}$ could effect a joint capture at
$t>\widehat{T}\left(  s\right)  $, given $C_{m}$ follows $\widehat{\sigma}%
^{m}$, he would be able to do so only due to moves of $R$. But if $R$ alone
could achieve capture later than $\widehat{T}\left(  s\right)  $, when $C_{m}$
uses $\widehat{\sigma}^{m}$, then $\widehat{T}\left(  s\right)  $ would not be
the optimal CR time, which is a contradiction.} Moreover it is clear that
$P_{-m}$ always prefers the joint capture which: (i)\ happens at the latest
possible time, call it $\widetilde{T}\left(  s\right)  $ and (ii)\ involves
the largest possible number of his cops, call it $\widetilde{K}\left(
s\right)  $. Now note that both these maximum values are achieved by following
$\widehat{\sigma}_{\ast}^{-m}$ until $\widetilde{T}\left(  s\right)  -1$, at
which time the robber is at the path end and $\widetilde{K}\left(  s\right)
+1$ cops are next to him\ \ and letting $R$ fall on the cops at $\widetilde
{T}\left(  s\right)  $; call this strategy $\widetilde{\sigma}^{-m}$. In this
case $P_{-m}$'s loss is
\[
\frac{1-\varepsilon}{\widetilde{K}\left(  s\right)  +1}\gamma^{\widetilde
{T}\left(  s\right)  }=\frac{1}{\widetilde{K}\left(  s\right)  +1}%
\gamma^{\widetilde{T}\left(  s\right)  }.
\]
Alternatively, $P_{-m}$ can choose to stick to $\widehat{\sigma}_{\ast}^{-m}$
until the end and let $C_{m}$ capture at $\widehat{T}\left(  s\right)  $.
Since at $\widehat{T}\left(  s\right)  $ (resp. at $\widetilde{T}\left(
s\right)  $)$\ C_{m}$ (resp. $R$)\ has the move, we have $\widehat{T}\left(
s\right)  =\widetilde{T}\left(  s\right)  +m$. In this case $P_{-m}$'s loss
is
\[
(1-\varepsilon)\gamma^{\widehat{T}\left(  s\right)  }=\gamma^{\widehat
{T}\left(  s\right)  }.
\]
Then $\widehat{\sigma}_{\ast}^{-m}$ is optimal for $P_{-m}$ iff
\begin{equation}
\gamma^{\widehat{T}\left(  s\right)  }\leq\frac{1}{\widetilde{K}\left(
s\right)  +1}\gamma^{\widetilde{T}\left(  s\right)  }\Leftrightarrow\gamma
^{m}\leq\frac{1}{\widetilde{K}\left(  s\right)  +1}\Leftrightarrow\gamma
\leq\left(  \frac{1}{\widetilde{K}\left(  s\right)  +1}\right)  ^{1/m}\text{.}
\label{eq1111}%
\end{equation}
For a positional $\overline{\sigma}$ to exist (\ref{eq1111})\ must hold for
all $s\in S_{B_{2}}$ and thus also for the minimum of $\left(  \frac
{1}{\widetilde{K}\left(  s\right)  +1}\right)  ^{1/m}$. This quantity is
increasing in $m$ and decreasing in $\widetilde{K}\left(  s\right)  $ and thus
takes its minimum for $m=1$ and $\widetilde{K}\left(  s\right)  =N-2$, i.e.,
when $C_{m}=C_{1}$ and at $\widetilde{T}\left(  s\right)  $ all cops are next
to the robber. Then (\ref{eq1111})\ becomes%
\begin{equation}
\gamma\leq\frac{1}{N-1}\text{.} \label{eq1112}%
\end{equation}
Hence, under \emph{and only under (\ref{eq1112}) }we have%
\begin{equation}
\forall n\in\{1,...,N\} \text{ }\exists\phi_{m}^{n}:\forall s\in S_{B_{2}}\cap
S^{n}\text{, }\phi_{m}^{n}(s)=\widehat{\sigma}_{\ast}^{n}(s)\text{.}
\label{eq1123}%
\end{equation}

Given $S_{B}=S_{B_{1}}\cup S_{B_{2}}$, $S_{ncl}^{\prime\prime}=S_{A}\cup
S_{B}$ and combining (\ref{eq1121}), (\ref{eq1122})\ and (\ref{eq1123}) yields
that a positional trigger strategies profile $\overline{\sigma}$ exists iff
(\emph{\ref{eq1112}}) holds.
\end{proof}

\begin{remark}
\label{Remneo01}\normalfont We now move on to graphs other than paths. It is
not hard to see that, for any such graph and pair of noncapture states
$s_{0},s$, there exists a history $h$ which starts at $s_{0}$ and ends at $s$.
I.e.,%
\[
\forall s_{0},s\in S_{nc}\text{, }\exists h=\left(  s_{0},...,s\right)  \in
H_{fnc}%
\]
This means that, \emph{for every }$s_{0}$, the set of endstates of \emph{all}
finite noncapture histories starting at $s_{0}$ is exactly $S_{nc}$. Thus
Conditions \textbf{A1}, \textbf{A2} for positional and nonpositional
respectively profiles reduce to:
\end{remark}

\begin{condition}
\label{condposprofNnew copy(1)}Let $\overline{\sigma}=(\overline{\sigma}%
^{1},...,\overline{\sigma}^{N})$ be a trigger strategies profile in
$\Gamma_{N}(G|s_{0},\gamma,\varepsilon)$, with $\overline{\sigma}^{n}$
consisting of $\left(  \phi_{m}^{n}\right)  _{m=1}^{N}$. Then $\overline
{\sigma}$ is positional (resp. nonpositional) iff \textbf{B1} (resp.
\textbf{B2}) holds:%
\begin{align}
\mathbf{B1}  &  :\forall n,m\in\{1,...,N\} \text{, }\exists\widehat{\sigma
}^{n}\in\widehat{\Sigma}^{n}:\forall s\in S^{n}\cap S_{nc}\text{, }\phi
_{m}^{n}(s)=\widehat{\sigma}^{n}(s),\label{eq011}\\
\mathbf{B2}  &  :\exists n,m\in\{1,...,N\}:\forall\widehat{\sigma}^{n}%
\in\widehat{\Sigma}^{n}\text{, }\exists s\in S^{n}\cap S_{nc}:\phi_{m}%
^{n}(s)\neq\widehat{\sigma}^{n}(s). \label{eq012}%
\end{align}

\end{condition}

The next proposition settles the issue for all rest graphs $G$ with $c\left(
G\right)  \leq N-1$.

\begin{proposition}
\label{prop0308}Consider $\Gamma_{N}(G|s_{0},\gamma,\varepsilon)$ where
$G=(V,E)$ is not a path, $c\left(  G\right)  \leq N-1$ and $\varepsilon=0$.
Then every trigger strategies profile $\overline{\sigma}$ is nonpositional for
all $\gamma\in(0,1)$.
\end{proposition}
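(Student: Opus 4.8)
The plan is to show that Condition \textbf{B2} holds by exhibiting a single pair $n,m$ and a single noncapture state $s\in S^{n}\cap S_{nc}$ at which \emph{no} CR-optimal strategy $\widehat{\sigma}^{n}$ can coincide with an optimal strategy $\phi_{m}^{n}$ of $\Gamma_{N}^{m}$. The natural candidate, by analogy with Proposition \ref{prop0304}, is to take $n=N$ (the robber) and $m=1$, and to use the fact that $G$ is not a path to locate a local configuration where the CR-optimal robber is forced to behave one way while the ``robber-friendly'' coalition in $\Gamma_{N}^{1}$ can profitably make the robber behave differently, at zero cost because $\varepsilon=0$.

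First I would use the hypothesis that $G$ is not a path together with $c(G)\le N-1$ to guarantee, via Remark \ref{Remneo01}, that from any $s_{0}$ every noncapture state is reachable, so I am free to choose a convenient target state $s$. Because $G$ is not a path it contains a vertex $u$ of degree $\ge 3$ (or more generally a local branching structure); I would place the robber at such a vertex with the robber to move, and position the cops so that $\widehat{C}(s)=C_{1}$, i.e. under CR-optimal play $C_{1}$ is the capturing cop and the CR-optimal robber response $\widehat{\sigma}^{N}(s)$ is uniquely determined (typically to flee along the branch maximizing $\widehat{T}$). The key point is that the branching at $u$ gives the robber a genuine alternative move that is \emph{not} CR-optimal, yet that leads into a region where some other cop $C_{k}$ ($k\neq 1$) can effect capture.

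Next I would compare the two outcomes in $\Gamma_{N}^{1}$. Under $\varepsilon=0$, player $P_{-1}$'s loss is $0$ whenever capture is effected by any cop other than $C_{1}$ (his minimum possible loss), whereas a \emph{pure} $C_{1}$ capture yields him the strictly positive loss $(1-\varepsilon)\gamma^{t}=\gamma^{t}>0$. So $P_{-1}$, who controls $R$ in $\Gamma_{N}^{1}$, optimally steers the robber off the CR-optimal branch toward a $C_{k}$ capture, giving an optimal $\phi_{1}^{N}$ with $\phi_{1}^{N}(s)\neq \widehat{\sigma}^{N}(s)$ for \emph{every} $\widehat{\sigma}^{N}\in\widehat{\Sigma}^{N}$. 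This establishes \textbf{B2} and hence nonpositionality for all $\gamma\in(0,1)$, independently of $\gamma$ since the comparison is between a strictly positive loss and a zero loss.

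The main obstacle is the geometric step: I must argue that in a non-path graph with $c(G)\le N-1$ one can \emph{always} reach a state where the robber, when he deviates from the unique CR-optimal move, can actually realize a cop-other-than-$C_{1}$ capture rather than merely prolonging his own capture by $C_{1}$. The condition $c(G)\le N-1$ is what guarantees capture is forced (so $P_{-1}$ cannot simply evade forever and the relevant capture times are finite), while the non-path hypothesis supplies the branching vertex that makes the robber's CR-optimal move unique and simultaneously offers a distinct move leading to a capture by a different cop at zero cost. Pinning down this configuration — showing both that the CR-optimal robber move is unique at $s$ and that the alternative branch is reachable and terminates in a non-$C_{1}$ capture — is the delicate part; the payoff comparison itself is immediate because $\varepsilon=0$ collapses every non-$C_1$ capture to loss $0$.
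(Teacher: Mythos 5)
Your payoff comparison is correct and is indeed the engine of the paper's own proof: with $\varepsilon=0$, any capture not involving the distinguished cop $C_m$ costs the coalition $P_{-m}$ nothing, while a pure $C_m$ capture costs $\gamma^{t}>0$, so CR-optimal play cannot be coalition-optimal at a state from which it leads to a $C_m$ capture. But the geometric step you defer as ``the delicate part'' is not a technicality --- it is essentially the whole content of the proposition --- and your sketch of it does not go through. First, your claim that a non-path graph contains a vertex of degree $\geq 3$ is false: cycles $\mathcal{C}_{\ell}$ with $\ell\geq 4$ are connected non-paths in which every vertex has degree $2$. Second, even granting a branching vertex, your plan requires the robber's deviation to \emph{terminate in} a capture by some $C_k$ with $k\neq 1$; but in $\Gamma_{N}^{1}$ the cop $C_1$ is adversarial, and nothing in your configuration prevents $C_1$ from intercepting the fleeing robber on the alternative branch before any coalition cop reaches him (in which case $P_{-1}$ still pays $\gamma^{t}>0$). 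Establishing that $P_{-1}$ can force a non-$C_1$ capture from a generic branching configuration would need a genuine multi-move pursuit argument, which you do not supply. Third, fixing $m=1$ in advance obliges you to engineer $\widehat{C}(s)=C_{1}$, another step you wave at but do not carry out.

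The paper dissolves all three difficulties by choosing a different state. By Remark \ref{Remneo01}, from any $s_0$ one can reach a state $\widetilde{s}$ in which the robber stands on a vertex $v_1$, the cops are distributed over \emph{two distinct neighbors} $v_2,v_3$ of $v_1$, and the robber has the move; this configuration needs only a vertex with two distinct neighbors (guaranteed by connectivity and $|V|\geq3$) plus reachability of arbitrary noncapture states, which is exactly what ``$G$ is not a path'' provides. Since $c(G)\leq N-1$, CR-optimal play from $\widetilde{s}$ ends in capture by some cop; the paper then \emph{lets the state choose} $m$, setting $C_m=\widehat{C}(\widetilde{s})$ and working in $\Gamma_{N}^{m}$. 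The profitable deviation for $P_{-m}$ is now a single move: the robber steps onto a cop occupying $v_2$ or $v_3$ other than $C_m$ --- such a cop exists because both vertices are occupied and $C_m$ is only one cop --- producing an \emph{immediate} non-$C_m$ capture with loss $0<\gamma^{\widehat{T}(\widetilde{s})}$. This one-step deviation needs no uniqueness of the CR-optimal robber move (no CR-optimal robber ever runs into a cop), no analysis of where a branch leads, and no interception argument. Until you replace your branching construction with something of this kind, your proposal has a genuine gap at precisely the point you identified.
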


\begin{proof}
Given $G$ is not a path, it is $|V|\geq3$ and, $G$ contains, or is equal to
either the clique $\mathcal{K}_{3}$ with $E=\left\{  \left\{  v_{1}%
,v_{2}\right\}  ,\left\{  v_{2},v_{3}\right\}  ,\left\{  v_{3},v_{1}\right\}
\right\}  $, or the star $\mathcal{S}_{3}$ $E=\left\{  \left\{  v_{1}%
,v_{2}\right\}  ,\left\{  v_{1},v_{3}\right\}  ,\left\{  v_{1},v_{4}\right\}
\right\}  $. By Remark \ref{Remneo01} then we have that, for any initial state
$s_{0}$, the state $\widetilde{s}$ where, some of the cops are on vertex
$v_{2}$, the rest are on $v_{3}$, the robber is on $v_{1}$ and it is the
robber's turn to move can always occur. Moreover, $c(G)\leq N-1$ means that,
starting from $\widetilde{s}$, capture occurs under CR-optimal play by
$\widehat{C}(\widetilde{s})$; let $\widehat{C}(\widetilde{s})=C_{m}$.

Given Proposition \ref{prop0304}, to show the current claim suffices to show
there exist no positional profiles $\overline{\sigma}$. Assume (towards
contradiction) there exists positional profile $\overline{\sigma}$. Now
consider game $\Gamma_{N}^{m}(G|s_{0},\gamma,\varepsilon)$ and the case where
state $\widetilde{s}$ has been reached. Then it must be that, there exist
optimal $\phi_{m}^{n}$ such that, Condition \textbf{B1} is satisfied for that
particular $m$ and $s=\widetilde{s}$. However, if a CR-optimal strategy
$\widehat{\sigma}^{n}$ is used for every $n\in\{1,...,N\}$, then starting from
$\widetilde{s}$, profile $\widehat{\sigma}$ leads to capture by $C_{m}$ at
$\widehat{T}(\widetilde{s})$, in which case $P_{-m}$'s loss will be
$\gamma^{\widehat{T}(\widetilde{s})}>0$. But we know that, starting from
$\widetilde{s}$, $P_{-m}$ can achieve his minimum loss of $0$ by moving $R$
into whichever of $v_{2}$ or $v_{3}$ does not contain $C_{m}$. Hence using
$\widehat{\sigma}^{n}$ for every $n\in\{1,...,N\}$ is suboptimal for $P_{-m}$
when starting from $\widetilde{s}$. Thus, there exists no positional
$\overline{\sigma}$.
\end{proof}

\subsubsection{Case:$\ c\left(  G\right)  >N-1$\label{sec040302}}

In this section we use the \emph{state cop number }$c_{N}\left(  G|s\right)  $
defined in Appendix \ref{chapA}. Thus we urge the reader to study this part
before proceeding.

For any given $N$, we define $\mathcal{G}(N)$ by
\begin{equation}
\mathcal{G}(N):=\{G:c(G)>N-1\}\text{,} \label{eq0H3}%
\end{equation}
i.e., the set of graphs examined in this section\footnote{It is a well known
result \cite{Aigner84} that, for every $k\in%
%TCIMACRO{\U{2115} }%
%BeginExpansion
\mathbb{N}
%EndExpansion
$, there exists a graph $G$ with $c(G)>k$.}\ or, equivalently by
(\ref{eq0H2}), the set
\begin{equation}
\mathcal{G}(N):=\{G:\exists s\text{ with }c(G|s)=\infty\}\text{.} \label{eqG}%
\end{equation}
Elements of $\mathcal{G}(3)$ include Dodecahedron and the Petersen graph (both
of which have $c(G)=3$) as well as any other graph resulting by
\emph{bridging}\ either of these to another graph.\footnote{Recall that a
\textquotedblleft bridge\textquotedblright\ is an edge whose deletion results
to a disconnected graph. Then our claim follows from the (easy to see) fact
that, if a graph $G$ with $c(G)=k$ is bridged to another graph, the resulting
graph $H$ will have $c(H)\geq k$.}\ The following propositions involve sets of
graphs that form a partition of $\mathcal{G}(N)$. Moreover, we sometimes
simply write $\mathcal{G}$ rather than $\mathcal{G}(N)$ and likewise for its constituents.

Given $N$, consider the following subsets of $\mathcal{G}$:

\begin{enumerate}
\item $\mathcal{G}_{1}$ consists of those graphs in $\mathcal{G}$ where there
exists a state starting from which, cooperation of two, or more cops, up to
$N-1$ is necessary and sufficient to ensure capture in $\breve{\Gamma}_{N}%
(G)$. I.e.,%
\begin{equation}
\mathcal{G}_{1}:=\{G\in\mathcal{G}:\exists s\text{ with }c(G|s)=k\in\left\{
2,...,N-1\right\}  \}. \label{eqG1}%
\end{equation}

\item $\mathcal{G}_{1}^{\prime}$ is the complement of $\mathcal{G}_{1}$ (with
respect to $\mathcal{G}$). Here, at every noncapture state, either the robber
evades under CR-optimal play, or there exists a single cop who can ensure
capture. I.e.,
\begin{equation}
\mathcal{G}_{1}^{\prime}:=\mathcal{G}\backslash\mathcal{G}_{1}=\{G\in
\mathcal{G}:\forall s\in S_{nc}\text{, }c(G|s)\in\left\{  1,\infty\right\}
\}. \label{eqG1'}%
\end{equation}

\item Finally, $\mathcal{G}_{2}$ consists of graphs such that, when it is the
robber's turn to move, he can always evade. I.e.,
\begin{equation}
\mathcal{G}_{2}:=\{G\in\mathcal{G}:\forall s\in S^{N}\cap S_{nc}\text{ it is
}c(G|s)=\infty\}. \label{eqG2a}%
\end{equation}
As we will shortly see, $\mathcal{G}_{2}$ is a subset of $\mathcal{G}%
_{1}^{\prime}$.
\end{enumerate}

Graphs in $\mathcal{G}_{1}(3)$ are typically graphs of $\mathcal{G}(3)$
containing as subgraphs cycles of length $l\geq4$. Some graphs in
$\mathcal{G}_{1}^{\prime}(3)$ are those resulting by bridging Dodecahedron or
Petersen with paths or trees. Some graphs in $\mathcal{G}_{2}(3)$ are
Dodecahedron and Petersen themselves.

The next proposition concerns $\mathcal{G}_{1}$. Note that holds, not only for
$\varepsilon=0$ but for every $\varepsilon\in\left[  0,\frac{1}{2}\right]
$.\footnote{However, for $\varepsilon>0$ the claim has been already shown in
Proposition \ref{prop0303}.}

\begin{proposition}
\label{PropTza}Consider $\Gamma_{N}(G|s_{0},\gamma,\varepsilon)$ with
$G\in\mathcal{G}_{1}$ and $\varepsilon\in\left[  0,\frac{1}{2}\right]  $. Then
every profile $\overline{\sigma}$ is nonpositional for all $\gamma\in(0,1)$.
\end{proposition}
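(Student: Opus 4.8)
The plan is to show that condition \textbf{B1} of Condition~\ref{condposprofNnew copy(1)} must fail, i.e. that no positional trigger profile $\overline{\sigma}$ can exist, and then invoke that condition to conclude every $\overline{\sigma}$ is nonpositional. Since $G\in\mathcal{G}_{1}$, I fix by~(\ref{eqG1}) a state $s^{\ast}$ with $c(G|s^{\ast})=k\in\{2,\dots,N-1\}$, and let $C_{m}:=\widehat{C}(s^{\ast})$ be the (unique) cop that effects capture under every CR-optimal profile started at $s^{\ast}$ (well defined since $k\le N-1$ guarantees capture from $s^{\ast}$, and the robber never runs into a cop). I single out this index $m$ and argue, in the spirit of Proposition~\ref{prop0308}, by contradiction against the optimality of the coalition $P_{-m}$ in the auxiliary game $\Gamma_{N}^{m}$.

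The crucial step is to determine the value $v$ of $\Gamma_{N}^{m}$ at $s^{\ast}$, and I claim $v=0$. On one hand every payoff of $P_{m}$ is nonnegative, so $v\ge 0$. On the other hand $P_{m}$ controls only $C_{m}$, and a positive payoff for $P_{m}$ requires a capture \emph{involving} $C_{m}$. Such a capture cannot be forced by $P_{m}$: the robber, now steered by the adversarial coalition $P_{-m}$, never runs into $C_{m}$, and since $c(G|s^{\ast})=k\ge 2$ a single cop started from $s^{\ast}$ cannot corner the robber on its own, while the remaining $N-2$ cops are also controlled by $P_{-m}$ and can be parked clear of the robber rather than made to cooperate with $C_{m}$. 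Hence $P_{-m}$ can hold $P_{m}$'s payoff to $0$ against every strategy of $P_{m}$, so $v\le 0$ and therefore $v=0$. Justifying this evasion rigorously from the definition of the state cop number in Appendix~\ref{chapA} --- in particular that ``fewer than $k$ cooperating cops never capture'' really does let $P_{-m}$ avoid any $C_{m}$-capture while keeping its own tokens harmless --- is the main obstacle of the argument.

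Now suppose, towards a contradiction, that a positional $\overline{\sigma}$ exists. By \textbf{B1} applied with this $m$, for every $n$ there is a CR-optimal $\widehat{\sigma}^{n}$ with $\phi_{m}^{n}=\widehat{\sigma}^{n}$ on $S^{n}\cap S_{nc}$, so the component strategies $(\phi_{m}^{n})_{n=1}^{N}$ coincide, on all relevant states (reachable by Remark~\ref{Remneo01}), with a single CR-optimal profile $\widehat{\sigma}$. Starting from $s^{\ast}$, $\widehat{\sigma}$ leads to a pure capture by $C_{m}=\widehat{C}(s^{\ast})$ at time $\widehat{T}(s^{\ast})$, so $P_{m}$'s payoff is $(1-\varepsilon)\gamma^{\widehat{T}(s^{\ast})}>0$ because $\varepsilon\le\tfrac{1}{2}<1$ and $\gamma\in(0,1)$. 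But $P_{-m}$'s part $(\phi_{m}^{n})_{n\neq m}$ is an optimal strategy of $P_{-m}$ in $\Gamma_{N}^{m}$, so against it $P_{m}$ can obtain from $s^{\ast}$ at most the value $v=0$; playing $\phi_{m}^{m}$ against it nevertheless yields the strictly positive payoff above. This contradiction shows that no positional profile exists, and hence, by Condition~\ref{condposprofNnew copy(1)}, every $\overline{\sigma}$ is nonpositional for all $\gamma\in(0,1)$ and all $\varepsilon\in[0,\tfrac{1}{2}]$.
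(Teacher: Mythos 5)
Your proposal is correct and is essentially the paper's own argument: both proofs hinge on the two facts that $c(G|s^{\ast})\geq 2$ lets the coalition $P_{-m}$ enforce total evasion (hence loss $0$) in $\Gamma_{N}^{m}$, while CR-optimal play by all of $P_{-m}$'s tokens would hand $\widehat{C}_{m}(s^{\ast})$ a capture costing $P_{-m}$ the amount $(1-\varepsilon)\gamma^{\widehat{T}(s^{\ast})}>0$, so the trigger components $\phi_{m}^{n}$ ($n\neq m$) cannot all agree with CR-optimal strategies. Your packaging via the value of the zero-sum game and an explicit contradiction against \textbf{B1} is just a more formal rendering of the paper's direct claim that CR-optimal play is suboptimal for $P_{-m}$ (and, like the paper, you correctly lean on, and honestly flag, the determinacy-type step that no first-order $s^{\ast}$-gcp existing yields an actual evasion strategy for the coalition).
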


\begin{proof}
Let $s\in S_{nc}$ with $c(G|s)=k\in\left\{  2,...,N-1\right\}  $; let
$\widehat{C}(s)=C_{m}$. Consider game $\Gamma_{N}^{m}(G|s,\gamma,\varepsilon
)$. Given $c(G|s)\geq2$, $P_{-m}$ can enforce robber evasion in $\Gamma
_{N}^{m}(G|s,\gamma,\varepsilon)$. Hence it cannot be $\phi_{m}^{n}%
(s_{t})=\widehat{\sigma}^{n}(s_{t})$ for all $n\in\{1,...,N-1\}\backslash m$
and all states $s_{t}\in S^{n}$ following $s$, because this allows $P_{m}$ to
capture$\ R$ and is suboptimal play for $P_{-m}$. Hence, there exists at least
one $n\in\{1,...,N-1\}\backslash m$ and at least one $s_{t}\in S^{n}$
following $s$ such that, $\phi_{m}^{n}(s_{t})\neq\widehat{\sigma}^{n}(s_{t})$,
$\forall\widehat{\sigma}^{n}\in\widehat{\Sigma}^{n}$, leading to the result sought.
\end{proof}

In the following lemma we show that $\mathcal{G}_{1}\cap\mathcal{G}%
_{2}=\emptyset$ and hence $\mathcal{G}_{2}\subset\mathcal{G}_{1}^{\prime}$.

\begin{lemma}
$\mathcal{G}_{1}(N)\cap\mathcal{G}_{2}(N)=\emptyset$.
\end{lemma}

\begin{proof}
Given $N$, assume on the contrary $\mathcal{G}_{1}\cap\mathcal{G}_{2}%
\neq\emptyset$ and let $G\in\mathcal{G}_{1}\cap\mathcal{G}_{2}$.
$G\in\mathcal{G}_{1}$ means there exists state $s$ such that $c(G|s)=k\in
\left\{  2,...,N-1\right\}  $.$\ $This in its turn means that, at $s$, there
exists no cop who (a) is located next to $R$ and (b) plays before $R$ (because
otherwise it would have been $c(G|s)=1$). This again means that, starting from
$s$ and irrespective of the moves of the cops playing before $R$, $R$ will
have the chance to move i.e., the game will reach a state $s^{\prime}\in
S^{N}\cap S_{nc}$. Now by assumption $G$ belongs also to $\mathcal{G}_{2}$ and
thus $c(G|s^{\prime})=\infty$ (i.e., starting from $s^{\prime}$ $R$ evades
under CR-optimal robber play). But then (from the previous argument) we have
that, under CR-optimal robber play $R$ evades also from $s$ and thus
$c(G|s)=\infty$, which however contradicts the assumption $c(G|s)=k\in\left\{
2,...,N-1\right\}  $. Hence there exists no such graph $G$ and thus
$\mathcal{G}_{1}\cap\mathcal{G}_{2}=\emptyset$.
\end{proof}

Furthermore $\mathcal{G}_{1}^{\prime}$ clearly contains graphs that do not
belong in $\mathcal{G}_{2}$\ and thus we have the following.

\begin{corollary}
$\mathcal{G}_{2}(N)\subset\mathcal{G}_{1}^{\prime}(N)$.
\end{corollary}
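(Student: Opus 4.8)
The plan is to read off the inclusion directly from the preceding Lemma together with the \emph{definition} of $\mathcal{G}_1'$ as the complement $\mathcal{G}\setminus\mathcal{G}_1$, and then to secure the \emph{strictness} of the inclusion by exhibiting an explicit witnessing graph. Almost all of the content sits in the already-proven Lemma; the corollary is essentially a set-theoretic consequence plus one example.

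For the inclusion itself, I would first note that $\mathcal{G}_2\subseteq\mathcal{G}$ holds by definition, since every element of $\mathcal{G}_2$ is drawn from $\mathcal{G}$ (see (\ref{eqG2a})). Now take any $G\in\mathcal{G}_2$. By the Lemma we have $\mathcal{G}_1(N)\cap\mathcal{G}_2(N)=\emptyset$, so $G\notin\mathcal{G}_1$. Since $G\in\mathcal{G}$ and, by (\ref{eqG1'}), $\mathcal{G}_1'=\mathcal{G}\setminus\mathcal{G}_1$, it follows that $G\in\mathcal{G}_1'$. As $G$ was arbitrary, this gives $\mathcal{G}_2\subseteq\mathcal{G}_1'$.

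For the strict inclusion I would produce a single graph lying in $\mathcal{G}_1'$ but not in $\mathcal{G}_2$, as anticipated by the discussion preceding the corollary. A natural choice is the Dodecahedron (which has $c(G)=3$, hence $c(G)>N-1$ for $N=3$, so $G\in\mathcal{G}$) bridged to a path. On the Dodecahedron part the robber evades under CR-optimal play, so those noncapture states have $c(G|s)=\infty$; on the path part a single cop suffices, so those states have $c(G|s)=1$. Thus at every noncapture state $c(G|s)\in\{1,\infty\}$, placing $G$ in $\mathcal{G}_1'$ via (\ref{eqG1'}). However, the path part contains robber-move noncapture states $s\in S^N\cap S_{nc}$ with $c(G|s)=1$, which violates the defining requirement of $\mathcal{G}_2$ that $c(G|s)=\infty$ at \emph{all} such states. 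Hence $G\notin\mathcal{G}_2$, and $\mathcal{G}_2\subsetneq\mathcal{G}_1'$.

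There is no deep obstacle here: the inclusion is immediate once the Lemma is in hand. The only step requiring genuine care is verifying the witness graph for strictness — specifically, confirming (i) that bridging preserves $c(G)>N-1$ (guaranteed by the bridging fact cited in the footnote to (\ref{eqG})), and (ii) that the bridged graph's state cop number really does take the value $1$ at some robber-move state on the path side while taking values only in $\{1,\infty\}$ across all noncapture states. This check rests on the state-cop-number machinery of the Appendix rather than on any new idea.
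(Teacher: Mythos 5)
Your proof is correct and follows essentially the same route as the paper: the inclusion is read off from the disjointness Lemma together with $\mathcal{G}_{1}'=\mathcal{G}\setminus\mathcal{G}_{1}$, and strictness comes from exactly the witness the paper itself cites (Dodecahedron or Petersen bridged to a path, which lies in $\mathcal{G}_{1}'(3)$ but not in $\mathcal{G}_{2}(3)$). One small imprecision — not every path-side noncapture state has $c(G|s)=1$, since the robber may escape over the bridge and then $c(G|s)=\infty$ — but your argument only needs that all states have $c(G|s)\in\{1,\infty\}$ and that \emph{some} robber-move state on the path has $c(G|s)=1$, both of which hold.
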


\begin{remark}
\label{RemF}\normalfont An important fact is the following: $c(G|s)=\infty$
$\ $for all $s\in S^{N}\cap S_{nc}$ implies that, under CR-optimal robber
play, the \emph{only} states that can lead to capture in any $G\in
\mathcal{G}_{2}$ are those where \emph{a cop is next to }$R$\emph{ and moves
before him}.\footnote{Note that this type of states are a trivial case of
states $s$ with $c(G|s)=1$ existing in every graph.}
\end{remark}

Next we identify one more set of graphs and respective games where positional
profiles $\overline{\sigma}$ exist.

\begin{proposition}
\label{Existposc(G)>N-1}Consider $\Gamma_{N}(G|s_{0},\gamma,\varepsilon)$ with
$G\in\mathcal{G}_{2}$ and $\varepsilon=0$. Then there exists a positional
profile $\overline{\sigma}$ for all $\gamma\in(0,1)$.
\end{proposition}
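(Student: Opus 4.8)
The plan is to verify Condition \textbf{B1} of Condition \ref{condposprofNnew copy(1)} by exhibiting a single CR-optimal profile $\widehat{\sigma}=(\widehat{\sigma}^{1},\dots,\widehat{\sigma}^{N})$ that is simultaneously optimal in \emph{every} game $\Gamma_{N}^{m}$; then setting $\phi_{m}^{n}:=\widehat{\sigma}^{n}$ for all $m,n$ makes \textbf{B1} hold and the induced $\overline{\sigma}$ positional. Since $\Phi_{N}^{n}=\widehat{\Sigma}^{n}$, the case $m=N$ is automatic, so I fix $m\in\{1,\dots,N-1\}$ and study $\Gamma_{N}^{m}$, where $P_{m}$ controls $C_{m}$ and $P_{-m}$ controls the remaining cops together with $R$. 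As $G\in\mathcal{G}_{2}$ is not a path, Remark \ref{Remneo01} lets me impose the positional conditions on all of $S^{n}\cap S_{nc}$.

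The first step is to read off the outcome of CR-optimal play from an arbitrary state $s$ using Remark \ref{RemF}. Because $c(G|s')=\infty$ at every robber-move state $s'\in S^{N}\cap S_{nc}$, a capture can occur only from a state in which a cop sits next to $R$ and moves before him. Hence from a cop-move state $s\in S^{i}$ the robber is caught only if some cop $C_{j}$ with $j\ge i$ is already adjacent to $R$ at $s$ (the robber never moves first); then $\widehat{C}(s)$ is the adjacent cop of smallest index $\ge i$, and from every other state the robber evades ($\widehat{T}(s)=\infty$). This yields a trichotomy for each $s$: (a) the robber evades; (b) $\widehat{C}(s)=C_{j}$ for some $j\neq m$; or (c) $\widehat{C}(s)=C_{m}$.

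The second step is to compute the value of $\Gamma_{N}^{m}$ and show $\widehat{\sigma}$ is a saddle point, using that $\varepsilon=0$ makes $P_{m}$'s payoff positive exactly when $C_{m}$ shares $R$'s vertex at capture and zero otherwise. In case (c) the $C_{m}$-capture is \emph{forced}: the cops of index between $i$ and $m$ are not adjacent to $R$, so they can neither capture first nor reach $R$'s vertex in time to make it a joint capture, and $R$ cannot move before $C_{m}$'s turn; the outcome is therefore a pure $C_{m}$-capture at $\widehat{T}(s)$ regardless of either player's choices, the value is $\gamma^{\widehat{T}(s)}$, and $\widehat{\sigma}$ effects exactly this and is optimal for both. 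In cases (a) and (b) the value is $0$: if $P_{-m}$ runs $R$ under $\widehat{\sigma}^{N}$, the robber's CR-optimality guarantees the CR-value---here evasion---against \emph{every} cop-coalition play, in particular against any strategy of the adversarial $C_{m}$ combined with the CR-optimal friendly cops; and in case (b) the friendly cop $C_{j}$ ($j\neq m$) is adjacent at $s$ and captures on its turn independently of $C_{m}$, which (since $\widehat{C}(s)=C_{j}$ forbids $C_{m}$ from being adjacent-and-earlier) cannot become a $C_{m}$-joint capture, again giving loss $0$. As payoffs are nonnegative, $\widehat{\sigma}^{m}$ trivially secures $P_{m}$ at least the value $0$ here. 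Thus $\widehat{\sigma}^{-m}=(\widehat{\sigma}^{n})_{n\neq m}$ guarantees loss $\le$ value and $\widehat{\sigma}^{m}$ guarantees payoff $\ge$ value from every state against every opponent, so $\widehat{\sigma}$ is optimal in $\Gamma_{N}^{m}$.

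Finally, with $\phi_{m}^{n}:=\widehat{\sigma}^{n}$ optimal in $\Gamma_{N}^{m}$ for every $m$ and $n$, Condition \textbf{B1} holds and the trigger profile built from $\widehat{\sigma}$ is positional; none of the argument used the particular value of $\gamma$, so the conclusion holds for all $\gamma\in(0,1)$. I expect the main obstacle to lie in the second step's claim that CR-optimal play remains optimal in the \emph{modified} game $\Gamma_{N}^{m}$ against \emph{arbitrary} deviations of $C_{m}$: this is precisely where one must marry the zero-sum ``guarantee-against-all-opponents'' property of the robber's CR-optimal strategy with the structural fact (Remark \ref{RemF}) that in $\mathcal{G}_{2}$ captures arise only from ``cop-adjacent, cop-to-move'' states, so that $\widehat{C}(s)=C_{m}$ is genuinely unavoidable while $\widehat{C}(s)=C_{j}\ (j\neq m)$ reliably keeps $C_{m}$ out of the capture.
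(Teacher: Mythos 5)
Your proposal is correct and follows essentially the same route as the paper's proof: your trichotomy (a)/(b)/(c) is exactly the paper's partition of $S_{nc}$ into $S_{A}$ (evasion), $S_{B}$ (capture by $C_{n}$, $n\neq m$) and $S_{C}$ (capture by $C_{m}$), derived likewise from Remark \ref{RemF}, and in each cell you argue, as the paper does, that the player who forces the outcome can use CR-optimal play while the other player cannot affect it, so that $\phi_{m}^{n}=\widehat{\sigma}^{n}$ verifies Condition \textbf{B1} for all $\gamma\in(0,1)$. Your added saddle-point/value framing and the explicit identification of $\widehat{C}(s)$ as the first adjacent cop in move order are only expository refinements of the same argument.
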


\begin{proof}
Fix an $m\in\{1,...,N-1\}$ for the rest of the proof and consider $\Gamma
_{N}^{m}(G|s_{0},\gamma,\varepsilon)$. We partition $S_{nc}$ into three
mutually disjoint sets $S_{A},S_{B}$ and $S_{C}$ defined below and examine
each case separately.
\begin{align*}
S_{A}  &  :=\left\{  s\in S_{nc}:\text{under (every) }\widehat{\sigma}\text{
the robber evades}\right\}  \text{,}\\
S_{B}  &  :=\left\{  s\in S_{nc}:\text{under (every) }\widehat{\sigma}\text{,
cop }C_{n}\text{ (}n\neq m\text{)\ captures (i.e., }\widehat{C}\left(
s\right)  =C_{n}\text{) at }\widehat{T}(s)\right\}  ,\\
S_{C}  &  :=\left\{  s\in S_{nc}:\text{under (every) }\widehat{\sigma}\text{,
cop }C_{m}\text{ captures (i.e., }\widehat{C}\left(  s\right)  =C_{m}\text{)
at }\widehat{T}(s)\right\}  \text{.}%
\end{align*}

\noindent\underline{\textbf{Case A: }$s\in S_{A}$}. For any $s\in S_{A}$, if
$P_{-m}$ uses any CR-optimal cop and robber strategies $\widehat{\sigma}^{n}$
($n\in\{1,...,N\} \backslash m$) he can force evasion of $R$ for any strategy
of $P_{m}$ and get his minimum loss of $0$. Given $P_{-m}$'s strategy, $P_{m}$
cannot affect the outcome. Thus \emph{any} strategy is optimal for him and so
is any CR-optimal strategy $\widehat{\sigma}^{m}$. Hence%
\begin{equation}
\forall n\in\{1,...,N\} \text{ }\exists\phi_{m}^{n},\widehat{\sigma}%
^{n}:\forall s\in S_{A}\cap S^{n}\text{, }\phi_{m}^{n}(s)=\widehat{\sigma}%
^{n}(s)\text{.} \label{eq012b}%
\end{equation}

\noindent\underline{\textbf{Case B: }$s\in S_{B}$}. By Remark \ref{RemF},
these are states such that: (i) a cop $C_{n}$ ($n\neq m$) is next the robber
and moves before him and (ii) if there exists another cop $C_{k}$ who is also
next the robber, then $C_{k}$ moves after $C_{n}$. Now, for any $s\in S_{B}$,
if $P_{-m}$ uses any CR-optimal strategies $\widehat{\sigma}^{n}$
($n\in\{1,...,N\}\backslash m$) $C_{n}$ captures at time $\widehat{T}(s)$ for
any strategy of $P_{m}$ and $P_{-m}$ gets his minimum loss of $\varepsilon
\gamma^{\widehat{T}(s)}=0$. Given $P_{-m}$'s strategy, $P_{m}$ cannot affect
the outcome. Thus any strategy is optimal for him and so is any CR-optimal
strategy $\widehat{\sigma}^{m}$. Hence%
\begin{equation}
\forall n\in\{1,...,N\}\text{ }\exists\phi_{m}^{n},\widehat{\sigma}%
^{n}:\forall s\in S_{B}\cap S^{n}\text{, }\phi_{m}^{n}(s)=\widehat{\sigma}%
^{n}(s)\text{.} \label{eq013}%
\end{equation}

\noindent\underline{\textbf{Case C: }$s\in S_{C}$}. By Remark \ref{RemF},
these are states such that: (i) cop $C_{m}$ is next to the robber and moves
before him and (ii) if there exists another cop $C_{n}$ who is also next the
robber, then $C_{n}$ moves after $C_{m}$. Now, for any $s\in S_{C}$, if
$P_{m}$ uses any CR-optimal cop strategy $\widehat{\sigma}^{m}$, then $C_{m}$
captures at time $\widehat{T}(s)$ for any strategy of $P_{-m}$ and $P_{m}$
gets his maximum gain of $(1-\varepsilon)\gamma^{\widehat{T}(s)}%
=\gamma^{\widehat{T}(s)}$. Given $P_{m}$'s strategy, $P_{-m}$ cannot affect
the outcome. Thus any strategy is optimal for him and so is any CR-optimal
strategies $\widehat{\sigma}^{n}$ ($n\in\{1,...,N\}\backslash m$). Hence%
\begin{equation}
\forall n\in\{1,...,N\}\text{ }\exists\phi_{m}^{n},\widehat{\sigma}%
^{n}:\forall s\in S_{C}\cap S^{n}\text{, }\phi_{m}^{n}(s)=\widehat{\sigma}%
^{n}(s)\text{.} \label{eq014}%
\end{equation}

Combining(\ref{eq012b})-(\ref{eq014}) and $S_{A}\cup S_{B}\cup S_{C}=S_{nc}$
we get the result sought.
\end{proof}

\medskip

Given $N$, let $\mathcal{G}_{2}^{\prime}$ be the complementary of
$\mathcal{G}_{2}$ in $\mathcal{G}_{1}^{\prime}$, i.e.,%
\begin{equation}
\mathcal{G}_{2}^{\prime}:=\mathcal{G}_{1}^{\prime}\backslash\mathcal{G}%
_{2}=\{G\in\mathcal{G}_{1}^{\prime}:\exists s\in S^{N}\text{ with }c(G|s)=1\}.
\label{eqG2'}%
\end{equation}

Summarizing to this point we have: (a) partitions $\mathcal{G=G}_{1}%
\cup\mathcal{G}_{1}^{\prime}$ and $\mathcal{G}_{1}^{\prime}=\mathcal{G}%
_{2}\cup\mathcal{G}_{2}^{\prime}$ and (b) $\mathcal{G}_{2}^{\prime}$ is the
last class of graphs remaining to examine.

In search of positional profiles $\overline{\sigma}$ within $\mathcal{G}%
_{2}^{\prime}$, a process of trial and error (coupled with some intuition) led
us to the following (and the last) partition of $\mathcal{G}_{2}^{\prime}$.
$\mathcal{G}_{3}$ is the subset of $\mathcal{G}_{2}^{\prime}$ with graphs
satisfying the property that:\ at \emph{every} state $s$ with $c(G|s)=1$, cop
$\widehat{C}_{m}(s)$ ($m\in\{1,...,N-1\}$) can always \emph{effect} capture
using $\widehat{\sigma}^{m}$, \emph{no matter what the rest players do};
$\mathcal{G}_{3}^{\prime}$ is the complement of $\mathcal{G}_{3}$ with respect
to $\mathcal{G}_{2}^{\prime}$. Formally, given $N$ and $\widehat{C}_{m}(s)$
($m\in\{1,...,N-1\}$) for all $s\in S_{nc}$, define:%
\begin{align}
\mathcal{G}_{3}  &  :=\left\{  G\in\mathcal{G}_{2}^{\prime}:\forall\left(
s\text{ with }c(G|s)=1\text{ and }\sigma^{-m}\right)  \text{ we have:
}(s,\widehat{\sigma}^{m},\sigma^{-m})\rightarrow\widehat{C}_{m}(s)\text{
capture}\right\}  ,\label{eqG3}\\
\mathcal{G}_{3}^{\prime}  &  :=\left\{  G\in\mathcal{G}_{2}^{\prime}%
:\exists\left(  s\text{ with }c(G|s)=1\text{ and }\sigma^{-m}\right)  \text{
such that: }(s,\widehat{\sigma}^{m},\sigma^{-m})\not \rightarrow \widehat
{C}_{m}(s)\text{ capture}\right\}  ; \label{eqG3prime}%
\end{align}
where $\rightarrow$ (resp. $\not \rightarrow $) means "leads to" (resp. does
not lead to).

The next proposition concerns graphs belonging to $\mathcal{G}_{3}^{\prime}$.
\footnote{One graph in $\mathcal{G}_{3}^{\prime}$ is a graph where the
Petersen graph is bridged to a path. Then a state satisfying the condition in
(\ref{eqG3prime}) is the one where $R$ is \textquotedblleft
crammed\textquotedblright\ between $C_{1}$ and $C_{2}$ on the path and it is
$R$'s turn to move. Under CR-optimal play $R$ stays put in the first move and
$C_{1}$ captures right after. However, $R$ can run into $C_{2}$ straight
ahead.}

\begin{proposition}
\label{Propnea01}Consider $\Gamma_{N}(G|s_{0},\gamma,\varepsilon)$ with
$G\in\mathcal{G}_{3}^{\prime}$ and $\varepsilon=0$. Then every profile
$\overline{\sigma}$ is nonpositional for all $\gamma\in(0,1)$.
\end{proposition}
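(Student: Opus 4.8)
The plan is to argue by contradiction in the style of Proposition~\ref{prop0308}, replacing the ad hoc ``crammed'' configuration used there by the state supplied by the defining property of $\mathcal{G}_3^{\prime}$. Since $G\in\mathcal{G}_3^{\prime}\subset\mathcal{G}$ has $c(G)>N-1\geq2$, it is not a path and $|V|\geq3$, so Proposition~\ref{prop0304} already guarantees a nonpositional profile; hence it suffices to show that \emph{no} positional $\overline{\sigma}$ exists. I would assume, towards a contradiction, that a positional $\overline{\sigma}$ exists, so that Condition \textbf{B1} holds for its constituent strategies $\left(\phi_m^n\right)$. By~(\ref{eqG3prime}) I then fix a noncapture state $s$ with $c(G|s)=1$, let $C_m=\widehat{C}_m(s)$ be the cop that captures under every CR-optimal profile started at $s$, and fix a strategy $\sigma^{-m}$ of the remaining tokens for which $\left(s,\widehat{\sigma}^m,\sigma^{-m}\right)$ does not lead to a $C_m$ capture.

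The heart of the argument is to read \textbf{B1} inside the zero-sum game $\Gamma_N^m(G|s,\gamma,\varepsilon)$ and to use the saddle-point property of optimal strategies. Applying \textbf{B1} to this fixed $m$ and to each token $n$, the chosen $\phi_m^n$ agrees with some CR-optimal $\widehat{\sigma}^n$ on all of $S^n\cap S_{nc}$; in particular $\phi_m^m=\widehat{\sigma}^m$ and $\phi_m^{-m}=\widehat{\sigma}^{-m}$, so the optimal profile of $\Gamma_N^m$ coincides with a full CR-optimal profile and is therefore a saddle point whose value I can compute from $s$. On the one hand, running the full CR-optimal profile from $s$ produces a pure $C_m$ capture at $\widehat{T}(s)$ (here $c(G|s)=1$ forces capture, and by the preliminaries it is always effected by the same cop $C_m$), so $P_m$ receives $(1-\varepsilon)\gamma^{\widehat{T}(s)}=\gamma^{\widehat{T}(s)}>0$. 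On the other hand, keeping $C_m$ at $\phi_m^m=\widehat{\sigma}^m$ but letting $P_{-m}$ switch to the deviation $\sigma^{-m}$ produces, by construction, no $C_m$ capture, hence either evasion or a capture by cops other than $C_m$; since $\varepsilon=0$ both outcomes pay $P_m$ exactly $0$.

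This is the contradiction: against the saddle-point strategy $\phi_m^m=\widehat{\sigma}^m$, player $P_{-m}$ can unilaterally lower $P_m$'s payoff from $\gamma^{\widehat{T}(s)}>0$ to $0$, which is impossible at a saddle point. Therefore \textbf{B1} cannot hold, its negation \textbf{B2} holds (for this $m$ and the token $n$ at which the match fails), and every $\overline{\sigma}$ is nonpositional. The step I expect to be the main obstacle is pinning down the two payoff values rigorously, and this is exactly where the hypotheses must be invoked: that the deviation of~(\ref{eqG3prime}) really yields payoff $0$ relies on $\varepsilon=0$ (a capture that excludes $C_m$ pays $\frac{\varepsilon}{N-K-1}\gamma^t$, which vanishes only then), and that it connects to the $\mathcal{G}_3^{\prime}$ hypothesis relies on \textbf{B1} forcing $C_m$ to play precisely $\widehat{\sigma}^m$, against which $\sigma^{-m}$ was chosen. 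Reachability of $s$ from the prescribed $s_0$ follows from Remark~\ref{Remneo01}, though it is not strictly needed since \textbf{B1} already quantifies over all noncapture states.
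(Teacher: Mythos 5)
Your proposal is correct and takes essentially the same route as the paper: both exploit the state $s$ and deviation $\sigma^{-m}$ supplied by (\ref{eqG3prime}) to compare, in $\Gamma_{N}^{m}(G|s,\gamma,\varepsilon)$, the loss $\gamma^{\widehat{T}(s)}>0$ that $P_{-m}$ suffers when all tokens follow CR-optimal strategies against the loss $0$ he can secure via $\sigma^{-m}$ (which is where $\varepsilon=0$ enters), concluding that CR-optimal play cannot be optimal for $P_{-m}$'s tokens. The paper states this directly as Condition \textbf{B2}, whereas you phrase it as a contradiction between \textbf{B1} and the saddle-point property of the optimal profile in the zero-sum game; this is only a cosmetic reformulation of the same argument.
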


\begin{proof}
Let $G\in\mathcal{G}_{3}^{\prime}$, $s$ satisfying the conditions in
(\ref{eqG3prime}) and $\widehat{C}_{m}(s)$, $\widehat{T}(s)$ as known.
Consider game $\Gamma_{N}^{m}(G|s,\gamma,\varepsilon)$. Starting from $s$,
under every profile $\widehat{\sigma}$, $\widehat{C}_{m}(s)$ captures at
$\widehat{T}(s)$ and $P_{-m}$'s loss is $(1-\varepsilon)\gamma^{\widehat
{T}(s)}=\gamma^{\widehat{T}(s)}$. Now, if $\sigma^{-m}$ is such that
$(s,\widehat{\sigma}^{m},\sigma^{-m})$ does not lead to $\widehat{C}_{m}(s)$
capture, then, using $\sigma^{-m}$ $P_{-m}$ can force, either evasion of $R$,
or capture by $C_{n}$ with $n\neq m$, achieving in both cases a minimum loss
of $0<\gamma^{\widehat{T}(s)}$. Hence, there exists at least one
$n\in\{1,...,N\} \backslash m$ and state $s_{t}\in S^{n}$ following $s$ where
$\phi_{m}^{n}(s_{t})\neq\widehat{\sigma}^{n}(s_{t})$, $\forall\widehat{\sigma
}^{n}\in\widehat{\Sigma}^{n}$ and thus, every profile $\overline{\sigma}$ is
nonpositional (and for all $\gamma\in(0,1)$).
\end{proof}

It remains to examine graphs in $\mathcal{G}_{3}$. As a first step, we aim to
elucidate the form of such graphs. The following lemma makes clearer the
distinction between sets $\mathcal{G}_{3}$ and $\mathcal{G}_{3}^{\prime}$.

\begin{lemma}
The following hold for any $N$:
\begin{align}
\text{If }G  &  \in\mathcal{G}_{3}\text{ then:\ }\forall\left(  s\in
S^{N}\text{ with }c(G|s)=1\text{ and }\sigma^{-m}\right)  \text{ we have:
}(s,\widehat{\sigma}^{m},\sigma^{-m})\rightarrow\widehat{C}_{m}(s)\text{
capture.}\label{eqG3new}\\
\text{If }G  &  \in\mathcal{G}_{3}^{\prime}\text{ then:\ }\exists\left(  s\in
S^{N}\text{ with }c(G|s)=1\text{ and }\sigma^{-m}\right)  \text{ such
that:\ }(s,\widehat{\sigma}^{m},\sigma^{-m})\not \rightarrow \widehat{C}%
_{m}(s)\text{ capture.} \label{eqG3primenew}%
\end{align}

\end{lemma}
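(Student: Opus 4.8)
The plan is to prove the two implications separately, noting that the first is essentially a restriction of a universal quantifier while the second carries all of the content. Since $\mathcal{G}_{2}^{\prime}=\mathcal{G}_{3}\cup\mathcal{G}_{3}^{\prime}$ and the two target conclusions are complementary (``good for all robber-move states'' versus ``bad for some robber-move state''), the two implications together in effect upgrade the definitions of $\mathcal{G}_{3},\mathcal{G}_{3}^{\prime}$ so that they quantify only over $S^{N}$.

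For (\ref{eqG3new}) I would argue directly from the definition (\ref{eqG3}). If $G\in\mathcal{G}_{3}$, the defining property holds for \emph{every} state $s$ with $c(G|s)=1$ and every $\sigma^{-m}$. Since the robber-move states of unit state cop number, $\{s\in S^{N}:c(G|s)=1\}$, form a subset of $\{s:c(G|s)=1\}$, the property holds in particular for all such $s\in S^{N}$, giving (\ref{eqG3new}) with no further work.

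For (\ref{eqG3primenew}) I would start from a witness $s$ of membership in $\mathcal{G}_{3}^{\prime}$, namely a (possibly cop-move) state with $c(G|s)=1$, capturer $\widehat{C}_{m}(s)=C_{m}$, and a strategy $\sigma^{-m}$ for which $(s,\widehat{\sigma}^{m},\sigma^{-m})$ does not lead to a $C_{m}$-capture, and then \emph{localize} the failure to a robber-move state. The first key observation is that, since $c(G|s)=1$, cop $C_{m}$ alone guarantees capture against an adversarial robber; as $P_{-m}$ controls the robber, playing $\widehat{\sigma}^{m}$ for $C_{m}$ forces a $C_{m}$-capture \emph{unless some other cop $C_{n}$ ($n\neq m$) captures strictly earlier}. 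Hence the bad outcome under $\sigma^{-m}$ is necessarily a pure $C_{n}$-capture with $n\neq m$. I would then trace the trajectory generated by $(s,\widehat{\sigma}^{m},\sigma^{-m})$ up to this $C_{n}$-capture and single out the last robber-move state $\bar{s}$ preceding it. Because $G\in\mathcal{G}_{1}^{\prime}$, every noncapture state has $c(G|\cdot)\in\{1,\infty\}$ by (\ref{eqG1'}), and since $C_{m}$ remains in a winning configuration along this trajectory I would argue that $c(G|\bar{s})=1$ with $\widehat{C}_{m}(\bar{s})=C_{m}$; moreover the continuation of $\sigma^{-m}$ from $\bar{s}$ still steers play into the $C_{n}$-capture rather than a $C_{m}$-capture. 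Thus $\bar{s}\in S^{N}$ is itself a witness of the defining failure, establishing (\ref{eqG3primenew}).

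The main obstacle I anticipate is this localization step: rigorously justifying that $\bar{s}$ still satisfies $c(G|\bar{s})=1$ with the \emph{same} capturing cop $C_{m}$, and that the $C_{n}$-capture is genuinely committed by a robber move (so that backing up to the preceding robber-move state is legitimate) rather than by a cop stepping onto a stationary robber. Ruling out such a premature cop-move capture should follow from the definition of $\widehat{C}_{m}(s)$ together with the dichotomy $c(G|\cdot)\in\{1,\infty\}$ valid in $\mathcal{G}_{1}^{\prime}$ and the structural description of how capture can arise in the spirit of Remark \ref{RemF}, since a cop adjacent to and moving before the robber would already force $\widehat{C}(\cdot)$ to name that cop, contradicting $\widehat{C}_{m}(s)=C_{m}$.
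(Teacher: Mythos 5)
Your treatment of (\ref{eqG3new}) is fine and coincides with the paper's (it is just the restriction of the universal quantifier in (\ref{eqG3}) to $S^{N}$). The problem is with (\ref{eqG3primenew}), where your ``first key observation'' --- that because $c(G|s)=1$, cop $C_{m}$ alone guarantees capture, so that under $(\widehat{\sigma}^{m},\sigma^{-m})$ the only possible bad outcome is a pure $C_{n}$-capture with $n\neq m$ --- is false, and your whole localization hangs on it. $c(G|s)=1$ only says that \emph{some} cop possesses a first-order $s$-gcp; it does not say that this cop is $\widehat{C}(s)=C_{m}$, nor that the CR-optimal strategy $\widehat{\sigma}^{m}$ is itself a guaranteeing strategy (recall $\widehat{\sigma}^{m}$ is optimal in modified CR, i.e., assuming the \emph{other cops cooperate}). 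Concretely, take $G$ to be the Petersen graph with a pendant path $q-p_{1}-p_{2}-p_{3}-p_{4}-p_{5}$ ($q$ in Petersen), $N=3$, and $s=(p_{1},p_{4},p_{3},N)$: $C_{2}$ is adjacent to $R$ on the leaf side, $C_{1}$ is at distance $2$ on the Petersen side, and $R$ moves. Under CR-optimal play $R$ stays put, $C_{1}$ closes in and $C_{2}$ captures, so $\widehat{C}(s)=C_{2}$ and $c(G|s)=1$; but the unique guaranteeing cop is $C_{1}$ (he corners $R$ against the leaf), not $C_{2}$. If $\sigma^{-2}$ has $C_{1}$ retreat into the Petersen part and never capture, while $R$ flees towards it, then whatever $\widehat{\sigma}^{2}$ prescribes, a single pursuing cop cannot catch $R$ on Petersen, and the play ends in \emph{evasion}: there is no capture event at all, so ``the last robber-move state preceding the capture'' that your argument needs does not exist.

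Even when the play does end with a $C_{n}$-capture, backing up to the last robber-move state $\bar{s}$ before it does not give a witness: at $\bar{s}$ the robber is adjacent to $C_{n}$, who moves before $C_{m}$ can intervene, so typically $\widehat{C}(\bar{s})=C_{n}$ rather than $C_{m}$; and then the play you are holding \emph{does} lead to a $\widehat{C}(\bar{s})$-capture, so it certifies nothing, and a fresh deviation against $\widehat{\sigma}^{n}$ would have to be built from scratch (your suggested repair, that ``$C_{m}$ remains in a winning configuration,'' presupposes once more the false identification of $C_{m}$ with the guaranteeing cop). The paper localizes in the opposite direction, and this is the idea you are missing: it first shows the robber must move \emph{at least once} under $(\widehat{\sigma}^{m},\sigma^{-m})$ --- if he were captured before moving, some cop adjacent to $R$ and moving before $R$ would exist at $s$; by definition that cop is $\widehat{C}_{m}(s)$, and then $\widehat{\sigma}^{m}$ itself would capture, contradicting the choice of $\sigma^{-m}$ --- and then takes $s^{\prime}$ to be the \emph{first} state of $S^{N}$ on the trajectory. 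That state exists whether the outcome is a capture by another cop or evasion, only cops have moved between $s$ and $s^{\prime}$, and the continuation of the same $\sigma^{-m}$ still fails to produce a $C_{m}$-capture from $s^{\prime}$. (The paper is admittedly terse in asserting that $c(G|s^{\prime})=1$ and that the capturing cop is unchanged at $s^{\prime}$, but its forward localization at least covers the evasion case, which your backward localization cannot.) So the fix is: drop the guaranteed-capture claim, and localize to the first robber move, not the last one before a capture.
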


\begin{proof}
Clearly (\ref{eqG3}) implies (\ref{eqG3new}). For (\ref{eqG3primenew}), let
$s,\sigma^{-m}$ satisfy the conditions in (\ref{eqG3prime}). If $s\in S^{N}$
then (\ref{eqG3primenew}) holds. If $s\notin S^{N}$, then starting from $s$
(and for any strategy of the cops)\ the game will certainly reach a state
$s^{\prime}\in S^{N}$ (i.e., the robber will move at least once) by the
following argument.

If the robber does not move at least once, then he is captured before he can
move. This means that: at $s$ a cop (a) is next to $R$ and (b)\ moves before
$R$. But then by definition this cop is $\widehat{C}_{m}(s)$, which
contradicts condition (\ref{eqG3prime}).

So, let $s^{\prime}$ be the \emph{first} state in $S^{N}$ that occurs starting
from $s$, under profile $(\widehat{\sigma}^{m},\sigma^{-m})$, where
$\sigma^{-m}$ is the same as in (\ref{eqG3prime}). Then $s^{\prime}$ clearly
satisfies (\ref{eqG3primenew}) for this same $\sigma^{-m}$.
\end{proof}

\begin{remark}
\label{RemG3}\normalfont Thus we see that for every $G\in\mathcal{G}_{3}$,
each state $s\in S^{N}\cap S_{nc}$ is such that, either (i) $c(G|s)=\infty$,
or (ii) $c(G|s)=1$ and $s$ satisfies condition (\ref{eqG3new}).
\end{remark}

Next we show that, for every state $s$ satisfying (\ref{eqG3new}), there is a
state $\widetilde{s}$ which satisfies the exact same conditions as $s$ and
differs from $s$ only in that the capturing cop $C_{m}$ is next to the robber.

\begin{lemma}
\label{Lemdipla}Let $s=(x^{1},...,x^{m},...,x^{N-1},u,N)\in S^{N}$ satisfying
(\ref{eqG3new}) for some $\widehat{C}_{m}(s)$ and $x^{m}\notin N(u)$. Then
there exists $\widetilde{s}=(x^{1},...,\widetilde{x}^{m},...,x^{N-1},u,N)\in
S^{N}$ such that (i)\ $\widetilde{x}^{m}\in N(u)$,\ (ii) $c(G|\widetilde
{s})=1$, (iii) $\widehat{C}(\widetilde{s})=C_{m}$ and (iv) \ for every
$\widetilde{\sigma}^{-m}$, $(\widetilde{s},\widehat{\sigma}^{m},\widetilde
{\sigma}^{-m})$ leads to $\widehat{C}_{m}(\widetilde{s})$ capture.
\end{lemma}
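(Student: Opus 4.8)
The plan is to realize $\widetilde{s}$ as a state that is genuinely \emph{visited} along a robust $C_m$-capture play issuing from $s$, and then to transfer the robustness of that capture from $s$ to $\widetilde{s}$ using the fact that $\widehat{\sigma}^m$ is \emph{positional}. First I would fix a CR-optimal positional strategy $\widehat{\sigma}^m$ witnessing (\ref{eqG3new}) for $s$ (with $\widehat{C}_m(s)=C_m$), and consider the particular play in which the robber and every cop $C_k$ with $k\neq m$ simply stay put (a legal move here, as in the examples with a stationary robber or cop), while $C_m$ follows $\widehat{\sigma}^m$. Since $s$ satisfies (\ref{eqG3new}) and "everyone but $P_m$ stays put" is one admissible $\sigma^{-m}$, cop $C_m$ must capture; as the robber never leaves $u$ and the other cops never leave their vertices $x^k\neq u$, the capture can occur only when $C_m$ first steps onto $u$.

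Writing $x^m=z_0,z_1,z_2,\dots$ for the resulting trajectory of $C_m$ (one step per round, possibly repeating a vertex if $C_m$ stays put), let round $i$ be the first in which $C_m$ moves onto $u$, so that $z_{i-1}\in N(u)$ and $z_i=u$; since $x^m\notin N(u)$ we have $i\geq 2$. I then set $\widetilde{x}^m:=z_{i-1}$ and let $\widetilde{s}:=(x^1,\dots,\widetilde{x}^m,\dots,x^{N-1},u,N)$ be the robber-turn state at the \emph{start} of round $i$ (at which point $C_m$ sits at $z_{i-1}$). By construction $\widetilde{s}$ differs from $s$ only in the $m$-th coordinate and satisfies (i) $\widetilde{x}^m\in N(u)$; moreover $\widetilde{s}\in S_{nc}$, since $\widetilde{x}^m\neq u$ and each $x^k\neq u$.

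The heart of the argument, and the step I expect to be the main obstacle, is to upgrade from "capture along this one stationary play" to property (iv): "capture from $\widetilde{s}$ against \emph{every} $\widetilde{\sigma}^{-m}$." Here I would exploit positionality of $\widehat{\sigma}^m$. Fix an arbitrary $\widetilde{\sigma}^{-m}$ regarded as beginning at $\widetilde{s}$, and extend it to a strategy $\sigma^{-m}$ from $s$ by the prescription "stay put for the first $i-1$ rounds (which drives the play from $s$ to $\widetilde{s}$ exactly as above), then follow $\widetilde{\sigma}^{-m}$." Applying (\ref{eqG3new}) to $s$ and this $\sigma^{-m}$ shows that $(s,\widehat{\sigma}^m,\sigma^{-m})$ leads to a $C_m$ capture; because $\widetilde{s}$ is a noncapture state reached before that capture, the capture occurs strictly after $\widetilde{s}$. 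As $\widehat{\sigma}^m$ is positional, $C_m$'s behaviour from $\widetilde{s}$ onward is identical whether the play is viewed as the tail of the play from $s$ or as a fresh play begun at $\widetilde{s}$; hence $(\widetilde{s},\widehat{\sigma}^m,\widetilde{\sigma}^{-m})$ also leads to a $C_m$ capture. Since $\widetilde{\sigma}^{-m}$ was arbitrary, (iv) holds.

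Finally I would deduce (ii) and (iii) from (iv). By (iv) the single cop $C_m$ guarantees capture from $\widetilde{s}$ regardless of the remaining players, so $c(G|\widetilde{s})\leq 1$; combined with $\widetilde{s}\in S_{nc}$ (forcing $c(G|\widetilde{s})\geq 1$) this yields (ii) $c(G|\widetilde{s})=1$. Specialising (iv) to a CR-optimal $\widetilde{\sigma}^{-m}$ shows that under a full CR-optimal profile the capturing cop is $C_m$; invoking the Preliminaries' remark that under CR-optimal play capture is always effected by the same cop, we obtain (iii) $\widehat{C}(\widetilde{s})=C_m$. Thus $\widetilde{s}$ satisfies exactly the conditions imposed on $s$, with the capturing cop now adjacent to the robber, as claimed.
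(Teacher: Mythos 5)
Your proof is correct and takes essentially the same approach as the paper's: both construct $\widetilde{s}$ by letting every token except $C_m$ stay put while $C_m$ follows the positional $\widehat{\sigma}^m$ (capture then forces $C_m$ into $N(u)$ at a robber-turn state), and both establish (iv) via the concatenated strategy ``stay put until $\widetilde{s}$, then follow an arbitrary $\widetilde{\sigma}^{-m}$,'' deducing (ii) and (iii) from (iv). Your explicit appeals to the positionality of $\widehat{\sigma}^m$ and the spelled-out derivation of (ii)--(iii) simply make precise steps the paper leaves implicit.
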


\begin{proof}
Let $\acute{\sigma}^{-m}$ be the profile where all players besides $C_{m}$
stay put. Then, starting from $s$ and under $(\widehat{\sigma}^{m}%
,\acute{\sigma}^{-m})$, there will come a time where $C_{m}$ reaches $R$'s
neighborhood (because otherwise $R$ could stay put indefinitely and evade) and
it is $R$'s turn to move. Let the respective state be $\widetilde{s}$. First
note that $\widetilde{s}$ has the required form; and $\widetilde{x}^{m}\in
N(u)$ i.e., condition (i) holds. Now, given $(s,\widehat{\sigma}^{m}%
,\sigma^{-m})$ leads to $\widehat{C}_{m}(s)$ capture for \emph{every}
$\sigma^{-m}$, this must be also true for \emph{any} profile $\sigma^{-m}$
that copies $\acute{\sigma}^{-m}$ until $\widetilde{s}$ occurs and follows
\emph{any} profile $\widetilde{\sigma}^{-m}$ thereafter. Thus, $(\widetilde
{s},\widehat{\sigma}^{m},\widetilde{\sigma}^{-m})$ leads to $\widehat{C}%
_{m}(s)$ capture for \emph{every} $\widetilde{\sigma}^{-m}$ and condition (iv)
also holds. Conditions (ii)-(iii) follow immediately from (iv).
\end{proof}

The following lemma reveals an important fact for graphs (as those in
$\mathcal{G}_{3}$) satisfying (\ref{eqG3new}).

\begin{lemma}
\label{LemG3}Let $G\in\mathcal{G}_{3}$. If $s=(x^{1},...,x^{m},...,x^{N-1}%
,u,N)\in S^{N}$ satisfies (\ref{eqG3new}), then:

\begin{enumerate}
\item $\left\vert N(u)\right\vert =1$ (i.e., $u$ is a leaf) and

\item if $\{v\}=N(u)$, then for all $s^{\prime}=(y^{1},...,y^{N-1},v,N)\in
S^{N}$ it is $c(G|s^{\prime})=\infty.$
\end{enumerate}
\end{lemma}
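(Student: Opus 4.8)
Let me work out what Lemma~\ref{LemG3} is asserting. We have $G\in\mathcal{G}_3$, and a state $s=(x^1,\dots,x^m,\dots,x^{N-1},u,N)\in S^N$ (robber on vertex $u$, robber to move) satisfying condition (\ref{eqG3new}): for every $\sigma^{-m}$, the profile $(\widehat\sigma^m,\sigma^{-m})$ leads to capture by $\widehat C_m(s)=C_m$. The claim is (1) $u$ is a leaf, i.e. $|N(u)|=1$, and (2) if $\{v\}=N(u)$ then from any state with the robber on $v$ and the robber to move, the robber evades under CR-optimal play ($c(G|s')=\infty$).

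So the content is: the *only* way a cop can force capture against a fully adversarial coalition (which is what $\sigma^{-m}$ ranging over everything models) is when the robber is pinned at a leaf and a cop sits on the unique neighbor, having the move. Let me check this is plausible. Condition (\ref{eqG3new}) says capture by $C_m$ is forced no matter what the *other* players do — crucially the other players include the robber, so this is really a statement that $C_m$ alone, starting from $s$, captures the robber regardless of how the robber (and the robber-friendly cops) play. That is exactly a statement that the robber has no escape.

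**The approach.** Let me think about how I'd prove each part. For part (1), I'd argue by contradiction: suppose $|N(u)|\ge 2$. By Lemma~\ref{Lemdipla} I may assume WLOG that $C_m$ is already adjacent to $u$, say $\widetilde x^m=v_1\in N(u)$, with the robber to move. Since $\deg(u)\ge 2$, there is another neighbor $v_2\ne v_1$ of $u$. Now consider the coalition strategy $\sigma^{-m}$ in which the robber immediately moves to $v_2$ (and all robber-friendly cops, say, stay put or play arbitrarily). I want to show capture by $C_m$ is *not* forced — which contradicts (\ref{eqG3new}). The delicate point is that after the robber steps to $v_2$, I need the robber to actually be able to continue evading $C_m$; I cannot just assume one step of escape suffices. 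This is where I expect the real work to be.

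**Key steps and the main obstacle.** The plan is: (i) use Lemma~\ref{Lemdipla} to reduce to the case $\widetilde x^m=v\in N(u)$; (ii) exploit that $s\in S^N\cap S_{nc}$ and $G\in\mathcal{G}_3\subset\mathcal{G}_2'\subset\mathcal{G}_1'$, so that at robber-turn noncapture states the state cop number is in $\{1,\infty\}$ — there is no ``$k$ cops needed with $2\le k\le N-1$'' state (that would put $G$ in $\mathcal{G}_1$, contradicting $G\in\mathcal{G}_1'$). The membership $G\in\mathcal{G}_2'\subset\mathcal{G}_1'$ means at every $s\in S^N\cap S_{nc}$ either $R$ evades or a *single* cop captures. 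So capture from $s$ is genuinely a one-cop phenomenon, and $\widehat C_m(s)=C_m$ is that single cop. I'd then argue that if $u$ had a second neighbor $v_2$, the robber could slip to $v_2$; since capture is a single-cop matter and $C_m$ was on the one neighbor $v$ of $u$ that it could use to trap $R$ at the leaf, moving to $v_2$ escapes $C_m$'s immediate threat, and from the resulting robber-turn state the robber again faces at most a single-cop capture threat, which it can keep deferring — formalizing this is the crux. For part (2), once $u$ is known to be a leaf with $N(u)=\{v\}$, I take any $s'=(y^1,\dots,y^{N-1},v,N)\in S^N$; I must show $c(G|s')=\infty$. Since $G\in\mathcal{G}_1'$, $c(G|s')\in\{1,\infty\}$, so it suffices to exclude $c(G|s')=1$. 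If $c(G|s')=1$ held, there would be a cop next to $v$ with the move who captures — but from $v$ the robber has the whole graph (minus the leaf $u$) to run into, and I'd derive that this forces a capturable configuration contradicting either the $\mathcal{G}_2'$ structure or the leaf-characterization just established. The main obstacle throughout is turning ``the robber can take one evading step'' into ``the robber evades forever,'' i.e. rigorously invoking that in $\mathcal{G}_1'$ capture is a single-cop matter so that a robber who is not cornered at a leaf-with-cop-on-the-neighbor can always reach a fresh robber-turn noncapture state and repeat, yielding $c(G|s)=\infty$ and the desired contradiction.
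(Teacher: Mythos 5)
There is a genuine gap, and it is exactly the one you flagged yourself. Your Part 1 plan is: reduce via Lemma \ref{Lemdipla} to the case $x^m=v_1\in N(u)$ (this reduction matches the paper's Case 1.B), then have the robber slip to a second neighbor $v_2$ and ``keep deferring'' capture. But the principle you want to invoke --- that because $G\in\mathcal{G}_1'$ every robber-turn state carries ``at most a single-cop threat,'' which the robber can keep postponing --- is false by definition: $c(G|s)=1$ means precisely that a single cop forces capture and the threat \emph{cannot} be deferred. Nothing in your sketch shows the post-move state has state cop number $\infty$ rather than $1$, and indeed condition (\ref{eqG3new}) itself guarantees that the continuation in which the robber steps to $v_2$ still ends in $C_m$-capture, so no one-step escape can work. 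The paper never proves any ``evade forever'' statement. Instead it exploits the fact (which you state in your preamble but never use) that (\ref{eqG3new}) demands capture by the \emph{designated} cop $\widehat{C}_m(s)$, and gives two one-move arguments: (i) if the extra neighbor $z\in N(u)$ is occupied by a cop $C_i\neq C_m$, the robber simply steps onto $C_i$ --- a capture by the wrong cop, contradicting (\ref{eqG3new}); (ii) if $z$ is unoccupied, the paper does not continue play from $s$ at all, but considers the \emph{modified} state $s''$ obtained by relocating some cop $C_i$ to $z$, observes $c(G|s'')=1$, and applies the defining property (\ref{eqG3}) of $\mathcal{G}_3$ --- which quantifies over \emph{all} states with state cop number $1$ --- to $s''$: whichever of $C_m,C_i$ is the designated cop $\widehat{C}(s'')$, the robber runs into the other one. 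This state-modification trick is the key idea missing from your proposal, and it is what makes the whole lemma a finite (in fact one-move) argument rather than an evasion argument.

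Your Part 2 is also not the paper's argument and remains undeveloped: the claim ``there would be a cop next to $v$ with the move'' is confused (at $s'$ the robber has the move), and ``the robber has the whole graph to run into'' is not turned into a contradiction. The paper's Part 2 is immediate from Part 1: if $c(G|s')\neq\infty$ then $c(G|s')=1$ (by $G\in\mathcal{G}_1'$), so by the definition of $\mathcal{G}_3$ the state $s'$ itself satisfies (\ref{eqG3new}); Part 1 applied to $s'$ makes $v$ a leaf, and two adjacent leaves force $G=\mathcal{P}_2$, contradicting $c(G)>N-1$. Your phrase ``contradicting \ldots the leaf-characterization just established'' gestures in this direction, but you never draw the $\mathcal{P}_2$ conclusion, which is the actual contradiction.
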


\begin{proof}
\textbf{1.} Assume towards contradiction $\left\vert N(u)\right\vert \geq2$.
We distinguish the following cases.

\begin{description}
\item[1.A.] $s$ is such that $x^{m}\in N(u)$. Then $N(u)$ contains at least
one more vertex $z$. There are two possibilities.

\item[(i)] $z$ is occupied by some cop $C_{i}\neq\widehat{C}_{m}(s)$. But this
violates the requirement that $(s,\widehat{\sigma}^{m},\sigma^{-m})$ leads to
$\widehat{C}_{m}(s)$ capture, for every $\sigma^{-m}$ since $R$ can run into
$C_{i}$ on his first move.

\item[(ii)] $z$ is not occupied by any cop. Moving some cop $C_{i}\neq
\widehat{C}_{m}(s)$ to vertex $z$ we create state $s^{\prime\prime}\in S^{N}$
which differs from $s$ only in $C_{i}$'s new location. Now we have (a)
$c(G|s^{\prime\prime})=1$ and (b) letting $\widehat{C}(s^{\prime\prime}%
)=C_{n}\ $(where $C_{n}$ may be $C_{m}$ or $C_{i}$) then (given $G\in
\mathcal{G}_{3}$) we must have that $(s^{\prime\prime},\widehat{\sigma}%
^{n},\sigma^{-n})$ leads to $C_{n}$ capture for every $\sigma^{-n}$. But (b)
is violated since $R$ can run into whichever cop is not $C_{n}$ (either
$C_{m}$ or $C_{i}$).

\item Hence, if $s$ is such that $x^{m}\in N(u)$ then $|N(u)|=1$ and $u$ is a leaf.

\item[1.B.] $s$ is such that $x^{m}\notin N(u)$. Then there exists some
$\widetilde{s}$ of the type described in Lemma \ref{Lemdipla} and we return to
Case A.

\item We conclude that if $s$ satisfies (\ref{eqG3new}) then $u$ is always a leaf.
\end{description}

\noindent\textbf{2.} Assume on the contrary that $v$, the neighbor of $u,$ is
such that there exists $s^{\prime}=(y^{1},...,y^{N-1},v,N)\in S^{N}$ with
$c(G|s^{\prime})\neq\infty$; then, since $G\in\mathcal{G}_{3}$, it must be
$c(G|s^{\prime})=1$; therefore, $s^{\prime}$ also satisfies (\ref{eqG3new}).
From Part 1 then we have that $v$ is a leaf; given $u$ is also a leaf, it
follows that $G$ is the path $\mathcal{P}_{2}$; but this contradicts
$G\in\mathcal{G}_{3}$.
\end{proof}

\begin{corollary}
Let $G\in\mathcal{G}_{3}$. Then $u$ is not a leaf iff%
\begin{equation}
\forall s=(x^{1},...,x^{N-1},u,N)\in S^{N}\text{ it is }c(G|s)=\infty.
\label{eqCor01}%
\end{equation}

\end{corollary}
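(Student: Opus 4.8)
The plan is to prove the biconditional in the corollary by establishing each direction separately, leveraging Lemma \ref{LemG3} for the forward-ish direction and a direct contrapositive argument for the other. The two conditions to be shown equivalent are: (a) $u$ is not a leaf, and (b) every state $s=(x^{1},...,x^{N-1},u,N)\in S^{N}$ satisfies $c(G|s)=\infty$. Since $G\in\mathcal{G}_{3}$ is assumed throughout, every state in $S^{N}\cap S_{nc}$ falls into exactly one of the two cases of Remark \ref{RemG3}: either $c(G|s)=\infty$, or $c(G|s)=1$ and $s$ satisfies condition (\ref{eqG3new}).

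\noindent First I would prove the direction (a)$\Rightarrow$(b) by contraposition, i.e., assume (b) fails and deduce that $u$ is a leaf. If (\ref{eqCor01}) fails, there exists a state $s=(x^{1},...,x^{N-1},u,N)\in S^{N}$ with $c(G|s)\neq\infty$. By Remark \ref{RemG3}, the only alternative for a state in $S^{N}\cap S_{nc}$ is $c(G|s)=1$ together with $s$ satisfying (\ref{eqG3new}). Then Lemma \ref{LemG3}, Part 1, applies directly and yields $|N(u)|=1$, i.e., $u$ is a leaf. This contradicts (a), completing this direction.

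\noindent For the converse (b)$\Rightarrow$(a), again I would argue contrapositively: assume $u$ \emph{is} a leaf and show (b) must fail, i.e., exhibit a state $s=(x^{1},...,x^{N-1},u,N)\in S^{N}$ with $c(G|s)\neq\infty$. The natural construction is to place a cop $C_{m}$ on the unique neighbor $v$ of $u$ (where $\{v\}=N(u)$) with that cop moving before the robber, and arrange the remaining tokens off of $N(u)=\{v\}$; since $u$ is a leaf, the robber at $u$ can only either stay put or move to $v$, and in both cases $C_{m}$ effects capture, giving $c(G|s)=1\neq\infty$. The subtle point is that one must confirm such a configuration is realizable within the state space and that no constraint from $G\in\mathcal{G}_{3}$ is violated; because the cop sits next to $R$ and moves first, this is exactly the ``trivial'' type of $c(G|s)=1$ state noted in Remark \ref{RemF}, so no conflict arises.

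\noindent \textbf{The main obstacle} I anticipate is making the converse direction fully rigorous: one must verify that a leaf $u$ genuinely forces the existence of a capturing configuration, handling the degenerate possibility that $G$ is itself too small (e.g., ruling out $G=\mathcal{P}_{2}$, which is excluded since $G\in\mathcal{G}_{3}\subset\mathcal{G}_{2}^{\prime}\subset\mathcal{G}$ and hence $c(G)>N-1\geq 2$). Care is also needed because the statement quantifies over \emph{all} states with robber at $u$, whereas falsifying (b) requires only \emph{one} such state; the contrapositive framing handles this cleanly, so the argument reduces to the single explicit construction above together with a citation of Lemma \ref{LemG3} for the forward direction.
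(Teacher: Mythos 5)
Your proposal is correct and follows essentially the same route as the paper: the forward direction via Lemma \ref{LemG3} (with Remark \ref{RemG3} supplying that any state with $c(G|s)\neq\infty$ must satisfy (\ref{eqG3new})), and the converse by exhibiting a state with a cop on the leaf's unique neighbor, which forces $c(G|s)=1$. Your version merely makes explicit the contrapositive framing and the realizability of the capturing configuration, which the paper leaves implicit.
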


\begin{proof}
The left to right implication follows from Lemma \ref{LemG3}; the reverse from
the fact that if $u$ is a leaf, then there always exist states $s$ with
$c(G|s)=1$ e.g those where a cop occupies $u$'s neighbor.
\end{proof}

The last two results lead directly to the following characterization of the
form of graphs in $\mathcal{G}_{3}$.

\begin{corollary}
\label{Corlast}Let $G\in\mathcal{G}_{3}$ and $u\in V(G)$. Then (i) either $u$
is a leaf, in which case its neighbor satisfies the condition in Part 2 of
Lemma \ref{LemG3} (ii) or $u$ is not a leaf and then condition (\ref{eqCor01}) holds.
\end{corollary}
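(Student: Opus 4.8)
The plan is to split on whether the vertex $u$ is a leaf of $G$, and to feed each alternative into the two immediately preceding results. Since exactly one of ``$u$ is a leaf'' and ``$u$ is not a leaf'' holds, establishing each branch yields the stated dichotomy. No new machinery is needed; the work is in correctly routing the hypotheses of Lemma~\ref{LemG3} and the preceding corollary.

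For the non-leaf branch I would simply invoke the preceding corollary, which asserts the biconditional that, for $G\in\mathcal{G}_{3}$, the vertex $u$ is not a leaf \emph{iff} condition (\ref{eqCor01}) holds. Hence if $u$ is not a leaf, (\ref{eqCor01}) holds at once, which is exactly alternative (ii). This direction is immediate and carries no hidden content.

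The leaf branch requires a little more care. Suppose $u$ is a leaf, with unique neighbour $v$, so that $N(u)=\{v\}$. To apply Part~2 of Lemma~\ref{LemG3} I first need a state with the robber at $u$ (and the robber to move) that satisfies (\ref{eqG3new}). Such a state exists: by the same biconditional in the preceding corollary, $u$ being a leaf is equivalent to the \emph{failure} of (\ref{eqCor01}), i.e.\ there is some $s=(x^{1},\ldots,x^{N-1},u,N)\in S^{N}$ with $c(G|s)\neq\infty$. By Remark~\ref{RemG3}, every state in $S^{N}\cap S_{nc}$ either has $c(G|s)=\infty$ or has $c(G|s)=1$ and satisfies (\ref{eqG3new}); since our $s$ is not of the first type, it is of the second, so it satisfies (\ref{eqG3new}). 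Now Lemma~\ref{LemG3}, Part~2, applied to this $s$ (whose robber sits at $u$ with $N(u)=\{v\}$), yields that for all $s'=(y^{1},\ldots,y^{N-1},v,N)\in S^{N}$ one has $c(G|s')=\infty$ --- which is precisely the condition in Part~2 of Lemma~\ref{LemG3} that alternative (i) asks $v$ to satisfy.

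The only real obstacle is this hidden hypothesis in the leaf branch: Lemma~\ref{LemG3} is phrased as a conditional on the \emph{existence} of a state satisfying (\ref{eqG3new}), so before I can extract any information about the neighbour $v$ I must manufacture such a state from the bare assumption that $u$ is a leaf. That manufacture is exactly the chain ``$u$ is a leaf $\Rightarrow$ (\ref{eqCor01}) fails $\Rightarrow$ $\exists s$ with $c(G|s)\neq\infty$ $\Rightarrow$ (via Remark~\ref{RemG3}) $s$ satisfies (\ref{eqG3new})''. Once this chain is in place, the corollary follows directly from the two cited results with no further computation.
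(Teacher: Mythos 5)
Your proof is correct and takes essentially the same route as the paper, which offers no separate argument but presents the corollary as following directly from Lemma \ref{LemG3} and the preceding corollary. Your case split, and in particular the chain ``$u$ is a leaf $\Rightarrow$ (\ref{eqCor01}) fails $\Rightarrow$ some $s\in S^{N}\cap S_{nc}$ with robber at $u$ has $c(G|s)\neq\infty$ $\Rightarrow$ (Remark \ref{RemG3}) $s$ satisfies (\ref{eqG3new}) $\Rightarrow$ (Lemma \ref{LemG3}, Part 2) the neighbor $v$ has the required property,'' is exactly the implicit routing the paper relies on, made explicit.
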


\begin{remark}
\normalfont In other words, Corollary \ref{Corlast} means that, $\mathcal{G}%
_{3}$ consists of the graphs obtained by attaching to each graph
$G\in\mathcal{G}_{2}$ an arbitrary (but positive) number of leaves. Some
graphs in $\mathcal{G}_{3}$ then are graphs like Petersen, or Dodecahedron to
which some leaves have been attached to.
\end{remark}

The following proposition concludes this study.

\begin{proposition}
Consider $\Gamma_{N}(G|s_{0},\gamma,\varepsilon)$ with $G\in\mathcal{G}_{3}$
and $\varepsilon=0$. Then there exists a positional profile $\overline{\sigma
}$ iff $\gamma\in(0,\frac{1}{N-1}]$.
\end{proposition}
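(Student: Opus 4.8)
The plan is to recycle, in combination, the proofs of Propositions \ref{Existposc(G)>N-1} and \ref{Proppospathse=0}, relying on the structural description of $\mathcal{G}_3$ furnished by Corollary \ref{Corlast}: every such $G$ is a graph of $\mathcal{G}_2$ to which a positive number of leaves has been attached. First I would fix an arbitrary $m\in\{1,\dots,N-1\}$, pass to the game $\Gamma_N^m(G|s_0,\gamma,\varepsilon)$, and partition $S_{nc}$ exactly as in Proposition \ref{Existposc(G)>N-1} into $S_A$ (the robber evades under every $\widehat{\sigma}$), $S_B$ (capture by some $C_n$ with $n\neq m$) and $S_C$ (capture by $C_m$). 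On $S_A$ and $S_B$ the arguments of Proposition \ref{Existposc(G)>N-1} carry over unchanged, since they depend only on the identity of $\widehat{C}(s)$ and not on the graph: in both cases $P_{-m}$ has a CR-optimal strategy forcing his minimum loss $0$, $P_m$ cannot influence the outcome, so any strategy (in particular a CR-optimal $\widehat{\sigma}^m$) is optimal for him and (\ref{eq011}) holds on these states with no constraint on $\gamma$.

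The heart of the matter is $S_C$. By Remark \ref{RemG3} and Lemma \ref{LemG3}, a $C_m$-capture in a graph of $\mathcal{G}_3$ can occur only when the robber is cornered on a leaf $u$ whose unique neighbour $v$ is an evasion vertex; hence these states reproduce, locally, the ``path end'' of Proposition \ref{Proppospathse=0}, this being precisely the new phenomenon absent from $\mathcal{G}_2$. Accordingly I would split $S_C$ into $S_{C_1}$, where $P_m$ can force a pure $C_m$ capture whatever $P_{-m}$ does, and $S_{C_2}$, where $P_{-m}$ can instead steer the robber from $u$ into $v$ while $C_m$ and some of $P_{-m}$'s cops sit on $v$, producing a joint capture. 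On $S_{C_1}$ the argument of Case C of Proposition \ref{Existposc(G)>N-1} (equivalently the $S_{B_1}$ argument of Proposition \ref{Proppospathse=0}) applies verbatim. On $S_{C_2}$ one repeats the comparison leading to (\ref{eq1111}): writing $\widehat{T}(s)=\widetilde{T}(s)+m$ for the pure and joint capture times and $\widetilde{K}(s)\le N-2$ for the number of $P_{-m}$'s cops that can pile on $v$, CR-optimal play is optimal for $P_{-m}$ iff $\gamma^{\widehat{T}(s)}\le\frac{1}{\widetilde{K}(s)+1}\gamma^{\widetilde{T}(s)}$, i.e. iff $\gamma\le(\tfrac{1}{\widetilde{K}(s)+1})^{1/m}$.

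For sufficiency I would then note that $\gamma\le\frac{1}{N-1}$ makes every such inequality hold, since $(\tfrac{1}{\widetilde{K}(s)+1})^{1/m}\ge(\tfrac{1}{N-1})^{1/m}\ge\frac{1}{N-1}$; combining the four cases over all $m$ yields a positional $\overline{\sigma}$. For necessity I would exhibit the extremal state: as $G$ carries at least one leaf $u$ with neighbour $v$, the noncapture state with the robber on $u$, all $N-1$ cops on $v$ and the robber to move is reachable (Remark \ref{Remneo01}, $G$ not being a path), and there $\widehat{C}(s)=C_1$, so $m=1$ and $\widetilde{K}(s)=N-2$, whence positionality forces $\gamma\le\frac{1}{N-1}$. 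The main obstacle I anticipate is the bookkeeping on $S_{C_2}$: justifying the timing identity $\widehat{T}(s)=\widetilde{T}(s)+m$, confirming that $\widetilde{K}(s)=N-2$ is genuinely attainable, and, above all, verifying that the leaf geometry guaranteed by Lemma \ref{LemG3} really does confine every $C_m$-capture to this path-end template, so that no capturing configuration outside it can arise.
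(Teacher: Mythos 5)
Your proposal is correct and takes essentially the same route as the paper: the same partition of states into evasion, capture-by-$C_n$ ($n\neq m$), and capture-by-$C_m$ (with the last split into pure and joint capture), the same use of Lemma \ref{LemG3} and Remark \ref{RemG3} to confine all capture states to leaf configurations, and the same reduction of the joint-capture comparison to Case II.B of Proposition \ref{Proppospathse=0}, giving the threshold $\gamma\le\frac{1}{N-1}$. The only cosmetic differences are organizational (the paper first disposes of non-leaf robber positions via Corollary \ref{Corlast} and the $\mathcal{G}_{2}$ analysis, then partitions the leaf states) and that your explicit extremal state for necessity --- robber on a leaf, all $N-1$ cops on its neighbour, robber to move --- is exactly the configuration the paper leaves implicit in its citation of Proposition \ref{Proppospathse=0}.
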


\begin{proof}
Take first any vertex $u$ which is not a leaf. Then condition (\ref{eqCor01})
holds. But (\ref{eqCor01}) is the same as the defining condition of
$\mathcal{G}_{2}$. Hence, the same analysis as that in Proposition
\ref{Existposc(G)>N-1} leads to the conclusion that: whenever the robber
occupies a vertex that is not a leaf, every $\widehat{\sigma}^{n}\in
\widehat{\Sigma}^{n}$ is optimal for each token $n\in\{1,...,N\}$ and in every
game $\Gamma_{N}^{m}(G|s_{0},\gamma,\varepsilon)$ for $m\in\{1,...,N\}$ and
$\gamma\in(0,1)$.

Thus, we only need to examine cases where the robber occupies an arbitrary
leaf of the graph. Fix an $m\in\{1,...,N-1\}$ for the rest of the proof and
consider $\Gamma_{N}^{m}(G|s_{0},\gamma,\varepsilon)$. For the arbitrary leaf
$u$ of $G$, let $S_{ncu}$ denote the set of all states $s\in S_{nc}$ where the
robber occupies $u$. We now partition $S_{ncu}$ as follows and consider for
each set of states players' $P_{-m}$ and $P_{m}$ optimal strategies.
\begin{align*}
S_{A}  &  :=\{s\in S_{ncu}:\text{under (every) }\widehat{\sigma}\text{ the
robber evades}\}\\
S_{B}  &  :=\left\{  s\in S_{ncu}:\text{under (every) }\widehat{\sigma}\text{,
cop }C_{m}\text{ captures (i.e., }\widehat{C}\left(  s\right)  =C_{m}\text{)
at }\widehat{T}(s)\right\} \\
S_{C}  &  :=\left\{  s\in S_{ncu}:\text{under (every) }\widehat{\sigma}\text{,
cop }C_{n}\text{ (}n\neq m\text{)\ captures (i.e., }\widehat{C}\left(
s\right)  =C_{n}\text{) at }\widehat{T}(s)\right\}  \text{.}%
\end{align*}

\noindent\underline{\textbf{Case A: }$s\in S_{A}$}. For any $s\in S_{A}$, if
$P_{-m}$ uses any CR-optimal (cop and robber) strategies $\widehat{\sigma}%
^{n}$ ($n\in\{1,...,N\} \backslash m$) he can force evasion of $R$ for any
strategy of $P_{m}$ and get his minimum loss of $0$. Given $P_{-m}$'s
strategy, $P_{m}$ cannot affect the outcome. Thus \emph{any} strategy is
optimal for him and so is any CR-optimal strategy $\widehat{\sigma}^{m}$.
Hence%
\begin{equation}
\forall n\in\{1,...,N\} \text{ }\exists\phi_{m}^{n},\widehat{\sigma}%
^{n}:\forall s\in S_{A}\cap S^{n}\text{, }\phi_{m}^{n}(s)=\widehat{\sigma}%
^{n}(s)\text{.} \label{eq025a}%
\end{equation}

\medskip

Let us pause here and consider the form of states $s=(x^{1},...,x^{N-1},u,n)$
\ (i.e., $R$ occupies $u$) where capture can occur under CR-optimal play. Let
$v$ denote $u$'s (unique) neighbor in $G$. Then it is not hard to see that,
under CR-optimal play, a capture can occur iff there exists at least one cop
$C_{i}$, who is (i) either already at $v$, i.e., $x^{i}=v$, or (ii) he can
cover $v$ in case $R$ moves there, i.e., either (ii.a) $d(x^{i},v)=1$, or
(ii.b) $d(x^{i},v)=2$ and $C_{i}$ moves before $R$. In any other case $R$ can
evade using $\widehat{\sigma}^{N}$. Furthermore note that, in all such states
$s$ it is $c(G|s)=1$ and $\widehat{C}_{m}(s)$ is the cop that is
\textquotedblleft closer\textquotedblright\ to $R$, taking into account also
whose turn is to move.\footnote{That is, (i) either $\widehat{C}_{m}(s)$ is
already at $v$ and for any other cop $C_{j}$ that might be also at $v$,
$\widehat{C}_{m}(s)$ moves before $C_{j}$, or (ii) $\widehat{C}_{m}(s)$ is at
distance $1$ from $v$ and for any other cop $C_{j}$ that might also be at
distance $1$ from $v$, $\widehat{C}_{m}(s)$ moves before $C_{j}$, or (iii)
$\widehat{C}_{m}(s)$ is at distance $2$ from $v$, no other cop's distance from
$v$ is less than $2$, and for any other cop $C_{j}$ that might also be at
distance $2$ from $v$, $\widehat{C}_{m}(s)$ moves before $C_{j}$ and
$\widehat{C}_{m}(s)$ moves before $R$.}

\medskip

\noindent\underline{\textbf{Case B: }$s\in S_{B}$}. Assume $P_{m}$ uses
$\widehat{\sigma}^{m}$ for $C_{m}$ and consider the options of $P_{-m}$. It
can be seen that, depending on the state $s$, there exist only two
possibilities, which partition further $S_{B}$ as follows:

\begin{enumerate}
\item States $s\in S_{B_{1}}:C_{m}$ effects a pure capture, for any strategy
of $P_{-m}$, and

\item States $s\in S_{B_{2}}:P_{-m}$ can effect a \emph{joint capture}, i.e.,
one involving $C_{m}$ and some $P_{-m}$ cop tokens.\footnote{Note that, the
expression $(s,\widehat{\sigma}^{m},\sigma^{-m})$ leads to $\widehat{C}%
_{m}(s)$ capture in (\ref{eqG3}) does not exclude the possibility of a joint
capture, which however involves $\widehat{C}_{m}(s)$ as well.}
\end{enumerate}

\noindent If $s\in S_{B_{1}}$, then under optimal play (in $\Gamma_{N}%
^{m}(G|s_{0},\gamma,\varepsilon)$) $C_{m}$ goes straight towards $R$, while
$R$ stays put and $C_{m}$ captures at time $\widehat{T}\left(  s\right)  $.
This describes the optimal strategies for $C_{m}$ and $R$. The remaining
tokens $C_{n}$ cannot affect the outcome. Thus, any strategy is optimal for
them and so is any $\widehat{\sigma}^{n}$. Hence%
\begin{equation}
\forall n\in\{1,...,N\} \text{ }\exists\phi_{m}^{n},\widehat{\sigma}%
^{n}:\forall s\in S_{B_{1}}\cap S^{n}\text{, }\phi_{m}^{n}(s)=\widehat{\sigma
}^{n}(s)\text{.} \label{eq028}%
\end{equation}
If $s\in S_{B_{2}}$, then the exact same analysis as in case II.B. part 2 of
Proposition \ref{Proppospathse=0} leads to the conclusion that, whereas for
$P_{m}$ using $\widehat{\sigma}^{m}$ is always optimal in $\Gamma_{N}%
^{m}(G|s_{0},\gamma,\varepsilon)$, $P_{-m}$, uses optimally $\widehat{\sigma
}^{-m}$ in $\Gamma_{N}^{m}(G|s_{0},\gamma,\varepsilon)$ for all $m\in
\{1,...,N-1\}$ and states $s$ iff%
\begin{equation}
\gamma\leq\frac{1}{N-1}\text{.} \label{eq031}%
\end{equation}
Thus we have that iff (\ref{eq031}) holds, then%
\begin{equation}
\forall n\in\{1,...,N\} \text{ }\exists\phi_{m}^{n},\widehat{\sigma}%
^{n}:\forall s\in S_{B_{2}}\cap S^{n}\text{, }\phi_{m}^{n}(s)=\widehat{\sigma
}^{n}(s)\text{.} \label{eq032}%
\end{equation}

\noindent\underline{\textbf{Case C: }$s\in S_{C}$}. In this case $P_{-m}$
employing CR-optimal strategies $\widehat{\sigma}^{n}$ for his cop and robber
tokens results to capture by $C_{n}$ and his best outcome, i.e., a loss of
$0$. Similarly $P_{m}$ loses anyhow so he may as well employ $\widehat{\sigma
}^{m}$ for his cop token $C_{m}$. Hence,%
\begin{equation}
\forall n\in\{1,...,N\} \text{ }\exists\phi_{m}^{n},\widehat{\sigma}%
^{n}:\forall s\in S_{C}\cap S^{n}\text{, }\phi_{m}^{n}(s)=\widehat{\sigma}%
^{n}(s)\text{.} \label{eq033}%
\end{equation}

\bigskip

Given $S_{A}\cup S_{B}\cup S_{C}=S_{ncv}$ for every leaf $v$ of $G$ and
relations (\ref{eq025a}), (\ref{eq028}), (\ref{eq032}) and (\ref{eq033}), we
have that, iff (\ref{eq031}) holds, then%
\begin{equation}
\forall n,m\in\{1,...,N\} \text{ }\exists\phi_{m}^{n},\widehat{\sigma}%
^{n}:\forall s\in S_{ncu}\cap S^{n}\text{, }\phi_{m}^{n}(s)=\widehat{\sigma
}^{n}(s)\text{,}%
\end{equation}
which completes the proof.
\end{proof}

\section{Conclusion\label{sec04}}

Our aim was to identify the cases of SCAR games $\Gamma_{N}(G|s_{0}%
,\gamma,\varepsilon)$ where, the generally nonpositional trigger strategies
Nash equilibria $\overline{\sigma}$ \cite{Konstantinidis2019} are in fact
positional. The current, exhaustive study regarding the form and the values of
of $G,s_{0}$ and $N,\gamma,\varepsilon$ respectively showed that positional
$\overline{\sigma}$ profiles exist in exceptional cases, as reflected in the
following table.

\begin{center}
\begin{table}[h]%
\begin{tabular}
[c]{|c|c|c|c|c|}\hline
$\varepsilon$ & $G$ & $N$ & $s_{0}$ & $\gamma$\\\hline
\multicolumn{1}{|l|}{$\varepsilon\in(0,\frac{1}{N-1}]$} &
\multicolumn{1}{|l|}{$G$ is $\text{path }\mathcal{P}_{2}$} &
\multicolumn{1}{|l|}{$N=3$} & \multicolumn{1}{|l|}{$s_{0}\in S_{nc}$} &
\multicolumn{1}{|l|}{$\left[  \sqrt{\frac{\varepsilon}{1-\varepsilon}}%
,\frac{1}{2-2\varepsilon}\right]  $}\\\hline
\multicolumn{1}{|l|}{$\varepsilon=0$} & \multicolumn{1}{|l|}{$G$ is
$\text{path }\mathcal{P}_{2}$} & \multicolumn{1}{|l|}{$N>3$} &
\multicolumn{1}{|l|}{$s_{0}\in S_{nc}$} & \multicolumn{1}{|l|}{$\left(
0,\frac{1}{N-1}\right]  $}\\\hline
\multicolumn{1}{|l|}{$\varepsilon=0$} & \multicolumn{1}{|l|}{$G$ is
$\text{path }\mathcal{P}_{n}\text{ (}n\geq2\text{)}$} &
\multicolumn{1}{|l|}{$N\geq3$} & \multicolumn{1}{|l|}{$s_{0}\ $s.t.
c$\text{ops on one side of the robber}$} & \multicolumn{1}{|l|}{$\left(
0,\frac{1}{N-1}\right]  $}\\\hline
\multicolumn{1}{|l|}{$\varepsilon=0$} & \multicolumn{1}{|l|}{$G\in
\mathcal{G}_{2}$} & \multicolumn{1}{|l|}{$N\geq3$} &
\multicolumn{1}{|l|}{$s_{0}\in S_{nc}$} & \multicolumn{1}{|l|}{$\left(
0,1\right)  $}\\\hline
\multicolumn{1}{|l|}{$\varepsilon=0$} & \multicolumn{1}{|l|}{$G\in
\mathcal{G}_{3}$} & \multicolumn{1}{|l|}{$N\geq3$} &
\multicolumn{1}{|l|}{$s_{0}\in S_{nc}$} & \multicolumn{1}{|l|}{$\left(
0,\frac{1}{N-1}\right]  $}\\\hline
\end{tabular}
\par
%\textbf{Table 1}. Cases of positional $\overline{\sigma}$.
\caption{Cases of positional $\overline{\sigma}$.}%
\end{table}
\end{center}

Clearly paths and the values of $\varepsilon$ and $\gamma$ play a major role.
If $\varepsilon>0$ where cops in the SCAR game $\Gamma_{N}(G|s_{0}%
,\gamma,\varepsilon)$ have an incentive to cooperate, a positional
$\overline{\sigma}$ exists only in the exceptional (if not trivial) case where
two cops chase the robber on the path $\mathcal{P}_{2}$ and only for these
values of $\gamma$. In all remaining cases $\overline{\sigma}$ are
nonpositional. Also note that, games played on graphs in the class
$\mathcal{G}_{2}$ are the \emph{only} ones where a path is not involved, since
those in $\mathcal{G}_{3}$ can be seen as graphs in $\mathcal{G}_{2}$
connected to $\mathcal{P}_{2}$.

In addition to the above results, in the current study we have introduced the
\emph{state cop number }$c(G|s)$. This was crucial in our analysis for the
following reasons. 

\begin{enumerate}
\item Purely graph theoretical notions do not suffice because they do not take
into account the number of players. For example, if $G$ is the Petersen graph,
then: (i) if $N=3$, $G$ belongs to $\mathcal{G}_{2}\left(  N\right)  $ and all
$\Gamma_{N}\left(  G|s_{0},\gamma,\varepsilon\right)  $ possess positional
profiles $\overline{\sigma}$ (see Proposition \ref{Existposc(G)>N-1}),
whereas  (ii) if $N=4$,  all $\Gamma_{N}\left(  G|s_{0},\gamma,\varepsilon
\right)  $ possess only nonpositional profiles $\overline{\sigma}$ (see
Proposition \ref{prop0308}). 

\item The same holds for the classical cop number $c(G)$. For example, the
study of $\mathcal{G(N)}$ (i.e., the class of graphs with $c(G)>N-1$)\ in
Section \ref{sec040302}, required a finer subdivision into six subclasses
defined in terms of $c(G|s)$.
\end{enumerate}

In a future paper we intend to present a refined analysis of the $N$-player
SCAR game, concentrating  on subgame perfect equilibria (SPE) rather tnan NE.
To this end, three elements of the current paper will prove especially
useful.\ First of all, we will use the state cop number $c\left(  G|s\right)
$, because finding SPE requires analysis of optimal play in every subgame
$\Gamma_{N}(G|s,\gamma,\varepsilon)$, i.e., for every $s$ and  $c(G|s)$ gives
important information for this. Furthermore, the subdivision of class
$\mathcal{G(N)}$ and the analysis of zero-sum games $\Gamma_{N}^{m}$ will also
be useful.

\appendix

\section{Appendix\label{chapA}}

Here we introduce the \emph{state }$s$ \emph{cop number} $c_{N}\left(
G|s\right)  $ (or simply $c\left(  G|s\right)  $, when $N$ is taken for
granted, or is implied by the context). The notions presented here concern
exclusively  \emph{capturability} and thus they depend only\emph{ }on the
graph $G$ and the state $s$ to which they refer (and thus on the number of
players, their locations and whose turn is to move). Payoffs and initial
states play no role. This motivates us to define a \emph{CR-pregame\footnote{A
similar situation occurs with \emph{extensive game forms with perfect
information}, that is, structures of extensive games where players'
preferences are not specified \cite[p.90]{Rubinstein94} }} $\breve{\Gamma
}\left(  G\right)  $ consisting of a graph $G$, $N-1$ cop tokens and one
robber token, where move and capture rules are the same as in SCAR$\ \Gamma
_{N}(G|s_{0},\gamma,\varepsilon)$, but no payoffs or initial state are
specified. These notions apply also to the games $\Gamma_{N}^{m}%
(G|s_{0},\gamma,\varepsilon)$, including modified CR and to any other game
sharing this basic structure, since, for any initial state $s$ and strategy
profile $\sigma$, the infinite history produced is the same in all of them.

\medskip

First we define $s$\emph{-guaranteed capture profiles of }$k$\emph{-th order},
i.e. $k$-cops profiles which, when used in $\breve{\Gamma}_{N}\left(
G\right)  $, guarantee capture from state $s$, no matter how the rest $N-k$
players (including $R$) play.

\begin{definition}
A $k$-cops profile $\sigma=\left(  \sigma^{i_{1}},...,\sigma^{i_{k}}\right)  $
$\left(  k\in\left\{  1,...,N-1\right\}  \right)  $ in $\breve{\Gamma}%
_{N}\left(  G\right)  $ is called $s$\emph{-guaranteed capture profile}
($s$-gcp) \emph{of }$k$\emph{-th order\ }iff, starting from $s$, the profile
$(\sigma,\sigma^{\prime})$ leads to capture for all strategy profiles
$\sigma^{\prime}=\left(  \sigma^{j_{1}},...,\sigma^{j_{N-k}}\right)  $ with
$\left\{  j_{1},...,j_{N-k}\right\}  =\left\{  1,...,N\right\}  \backslash
\left\{  i_{1},...,i_{k}\right\}  $ of the rest players.
\end{definition}

Note that the definition of an $s$-gcp $\sigma$ implies guaranteed capture by
\emph{some} (one or more) cops, but \emph{not necessarily by one of the cops
involved in }$\sigma$.

And now we define the state cop number $c\left(  G|s\right)  $ in
$\breve{\Gamma}_{N}(G)$.

\begin{definition}
Consider the pregame $\breve{\Gamma}_{N}(G)$ ($N\geq2$) and $s\in S_{nc}$.

\begin{enumerate}
\item If a $k$-th order ($k\in\left\{  1,...,N-1\right\}  $) $s$-gcp exists,
then the \emph{state} $s$ \emph{cop number in }$\breve{\Gamma}_{N}\left(
G\right)  $ is denoted by $c\left(  G|s\right)  $ and defined to be the
minimum $k$ for which such a $k$-th order $s$-gcp exists.

\item Otherwise $c\left(  G|s\right)  =\infty$.
\end{enumerate}
\end{definition}

\begin{example}
\label{ExA}\normalfont Consider $\breve{\Gamma}_{3}(G)$ on graph $G$ depicted
in both Fig. \ref{fig0307}.a and Fig. \ref{fig0307}.b; note that $c(G)=2$.
\begin{figure}[ptbh]
\begin{center}
\begin{tikzpicture} \SetGraphUnit{2}
\Vertex[x= 0,y= 0]{1}
\Vertex[x=-1,y=-1]{2}
\Vertex[x= 1,y=-1]{3}
\Vertex[x= 0,y=-2]{4}
\Vertex[x= 2.5,y=-1]{5}
\Vertex[x= 4,y=-1]{6}
\Vertex[x= 5.5,y=-1]{7}
\node(A) [label=$C_1$] at (1,-0.7) {};
\node(B) [label=$R$  ] at (2.5,-0.7) {};
\node(C) [label=$C_2$] at (5.5,-0.7) {};
\Edge(1)(2)
\Edge(1)(3)
\Edge(2)(4)
\Edge(3)(4)
\Edge(3)(5)
\Edge(5)(6)
\Edge(6)(7)
\SetVertexNoLabel
\end{tikzpicture}
\qquad\begin{tikzpicture}
\SetGraphUnit{2}
\Vertex[x= 0,y= 0]{1}
\Vertex[x=-1,y=-1]{2}
\Vertex[x= 1,y=-1]{3}
\Vertex[x= 0,y=-2]{4}
\Vertex[x= 2.5,y=-1]{5}
\Vertex[x= 4,y=-1]{6}
\Vertex[x= 5.5,y=-1]{7}
\node(A) [label=$C_1$] at (4,-0.7) {};
\node(B) [label=$R$  ] at (1,-0.7) {};
\node(C) [label=$C_2$] at (2.5,-0.7) {};
\Edge(1)(2)
\Edge(1)(3)
\Edge(2)(4)
\Edge(3)(4)
\Edge(3)(5)
\Edge(5)(6)
\Edge(6)(7)
\SetVertexNoLabel
\end{tikzpicture}
\end{center}
\caption{(a) $s_{1}=\left(  3,7,5,3\right)  $, (b) $s_{2}=\left(
6,5,3,3\right)  $}%
\label{fig0307}%
\end{figure}Suppose the current state is $s_{1}=(3,7,5,3)$ as depicted in Fig.
\ref{fig0307}.a with $R$ having the move. It is clear that, starting from
$s_{1}$, $C_{1}$ can always ensure capture by going towards $R$ and thus
either effecting capture himself, or forcing $R$ to run into $C_{2}$. Hence
$c(G|s_{1})=1$.\footnote{Note however that $C_{1},R$ can move so that $C_{2}$
cannot effect capture.} From $s_{2}=(6,5,3,3)$ however (Fig. \ref{fig0307}.b)
with $R$ having again the move, both cops are needed to ensure capture and
thus $c(G|s_{2})=2$.
\end{example}

Finally we present the following theorem which establishes the connection
between $c\left(  G|s\right)  $ and the classic cop number $c(G)$. 

\begin{theorem}
\label{prop0315}Consider pregame $\breve{\Gamma}_{N}(G)$ with $N\geq2$. Then%
\begin{align}
c(G)  &  =K\leq N-1\Leftrightarrow\underset{s\in S_{nc}}{\max}c\left(
G|s\right)  =K\leq N-1\label{eq0H1}\\
c(G)  &  >N-1\Leftrightarrow\underset{s\in S_{nc}}{\max}c(G|s)=\infty.
\label{eq0H2}%
\end{align}

\end{theorem}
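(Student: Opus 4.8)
The plan is to reduce everything to classic CR by exploiting the fact (established in \cite{Konstantinidis2019}) that the capture content of the pregame $\breve{\Gamma}_{N}(G)$ coincides with that of classic CR: a given number of cops can force capture in $\breve{\Gamma}_{N}(G)$ from a configuration exactly when the same number can force capture in classic CR from that configuration (only the bookkeeping of time differs, so robber evasion transfers in both directions). I would first record two standard facts about the classic cop number. First, a \emph{reposition-then-win} lemma: if $c(G)=K$, then $K$ cops capture from \emph{every} starting configuration, not merely from their optimal home vertices---each cop first walks to its home vertex (capturing early if possible) and then the $K$-cop winning strategy takes over, which succeeds against any robber position. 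Second, a \emph{monotonicity} fact: weakening the cops (removing some, or starting them on a sub-multiset of a given set of vertices) can only help the robber, so a robber evasion strategy against a set of cops remains an evasion strategy against any weaker configuration.

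Equipped with these, I would prove the combined statement that $\max_{s\in S_{nc}}c(G|s)=c(G)$ when $c(G)\le N-1$ and $\max_{s\in S_{nc}}c(G|s)=\infty$ when $c(G)>N-1$; the two displayed equivalences follow at once. For the \emph{upper bound} (needed when $c(G)=K\le N-1$): given any $s$, designate any $K$ of the $N-1$ cops and have them run the classic $K$-cop winning strategy from the cop positions recorded in $s$. This strategy depends only on the robber's location, so the remaining $N-1-K$ adversarial cops cannot obstruct it---and should one of them happen to catch the robber, that still counts as capture. Hence a $K$-th order $s$-gcp exists and $c(G|s)\le K$ for every $s$, giving $\max_{s}c(G|s)\le K$.

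For the \emph{lower bound} I would exhibit a single witness state. Since $c(G)>K-1$, pick a classic configuration---cop vertices $q_{1},\dots,q_{K-1}$ and a robber vertex $r\neq q_{i}$, robber to move---from which the robber evades $K-1$ cops. I then form $s^{*}$ by placing \emph{all} $N-1$ cops on the vertices $q_{1},\dots,q_{K-1}$ (stacking the surplus $N-K$ cops on, say, $q_{1}$), putting $R$ at $r$, and letting $R$ move next; note $s^{*}\in S_{nc}$. Any choice of $K-1$ designated cops then sits on a sub-multiset of $\{q_{1},\dots,q_{K-1}\}$, hence forms a configuration no stronger than the original $K-1$ cops; by monotonicity the robber still evades it, the non-designated cops (being adversarial) simply keeping away and never capturing. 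Thus no $(K-1)$-th order $s^{*}$-gcp exists, so $c(G|s^{*})\ge K$, and together with the upper bound $\max_{s}c(G|s)=K$. When $c(G)>N-1$ the identical construction with $K-1$ replaced by $N-1$ yields a state $s^{*}$ from which even all $N-1$ cops cannot force capture, i.e.\ $c(G|s^{*})=\infty$. The reverse implications are then immediate: a finite $\max_{s}c(G|s)=K\le N-1$ means $K$ cops capture from every configuration (so $c(G)\le K$) while some $s^{*}$ defeats every $K-1$ cops (so $c(G)>K-1$), forcing $c(G)=K$; and $\max_{s}c(G|s)=\infty$ exhibits a configuration defeating all $N-1$ cops, whence $c(G)>N-1$.

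The main obstacle is the lower-bound witness: one must produce a \emph{single} state in which \emph{no} size-$(K-1)$ team of cops can guarantee capture, even though extra adversarial cops clutter the board. The stacking device together with monotonicity is what makes this work, and the delicate point to verify is that surplus cops controlled by the adversary are genuinely harmless---they can always decline to capture and decline to block---so that each designated subset faces only a weakened version of a configuration already known to be evadable.
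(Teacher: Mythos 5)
Your handling of (\ref{eq0H2}) is sound and is essentially the paper's own argument: note that the paper proves \emph{only} (\ref{eq0H2}), explicitly deferring the complete proof (in particular all of (\ref{eq0H1})) to a forthcoming paper, and its short proof is exactly your definitional unwinding, with your reposition-then-win and monotonicity lemmas making explicit what the paper leaves implicit. It is worth seeing why this half avoids the trouble below: to get $c(G|s^{*})=\infty$ it suffices to rule out an $(N-1)$-th order $s^{*}$-gcp, since a $k$-th order gcp always extends to an $(N-1)$-th order one by freezing the remaining cops' strategies; and when the designated team is all $N-1$ cops, the ``rest players'' consist of the robber alone, so no leftover cop tokens clutter the board and the classic evasion strategy transfers verbatim.

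The gap is in your lower-bound witness for (\ref{eq0H1}), i.e.\ the claim $c(G|s^{*})\geq K$. First, the assertion that every choice of $K-1$ designated cops ``sits on a sub-multiset of $\{q_{1},\dots,q_{K-1}\}$'' is false: with the $N-K$ surplus cops stacked on $q_{1}$, a designated team may contain two or more cops at $q_{1}$, and a multiset with $q_{1}$ repeated is not a sub-multiset of $\{q_{1},\dots,q_{K-1}\}$; nor is such a stacked team dominated by the spread team (stacked cops can later split up), so evasion of the spread team does not transfer via your monotonicity lemma. Second, and more fundamentally, the non-designated cops are not ``genuinely harmless'': a cop token standing on a vertex makes that vertex fatal for the robber whether or not its owner intends to capture --- there is no ``declining to block.'' On the cycle with four vertices, one mobile cop plus one cop that merely stays put capture the robber even though the cop number is $2$; so ``$R$ evades $K-1$ adversarial cops'' does not imply ``$R$ evades $K-1$ adversarial cops in the presence of leftover stationary tokens.'' The natural repair --- have each friendly cop stay co-located with some designated cop, so that avoiding the designated team avoids everyone --- breaks on the move order: a friendly cop whose index precedes all designated indices moves before them, can only copy with a one-round lag, and a classic evasion strategy gives no guarantee that the robber avoids vertices cops have just vacated. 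Thus the statement you actually need --- that from $s^{*}$ the robber and the leftover cops can \emph{jointly} avoid capture against every size-$(K-1)$ team --- is precisely the content of $c(G|s^{*})\geq K$, and it is asserted rather than proved. This appears to be exactly the difficulty that led the authors to postpone the proof of (\ref{eq0H1}).
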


\begin{proof}
The complete proof will be given in a forthcoming paper. Here we give only the
(short and plausible) proof of (\ref{eq0H2}) which is what we will use in this
paper. Assume $c(G)>N-1$. This means there exists $s^{\prime}$ in
$\breve{\Gamma}_{N}(G)$ where the $N-1$ available cops do not suffice to
ensure capture. Then by definition of $c(G|s)$ it is $c(G|s^{\prime})=\infty$
and thus $\max_{s\in S_{nc}}c(G|s)=\infty$. Conversely, $\max_{s\in S_{nc}%
}c(G|s)=\infty$ implies there exists $s^{\prime}$ such that $c(G|s^{\prime
})=\infty$ and hence $N-1$ cops do not suffice to ensure capture in
$\breve{\Gamma}_{N}(G)$ from state $s^{\prime}$. Thus $c(G)>N-1$.
\end{proof}

\end{document}